\begin{document}

\newtheorem{theorem}{\bf Theorem}[section]
\newtheorem{proposition}[theorem]{\bf Proposition}
\newtheorem{definition}[theorem]{\bf Definition}
\newtheorem{corollary}[theorem]{\bf Corollary}
\newtheorem{example}[theorem]{\bf Example}
\newtheorem{remark}[theorem]{\bf Remark}
\newtheorem{lemma}[theorem]{\bf Lemma}
\newcommand{\nrm}[1]{|\!|\!| {#1} |\!|\!|}

\newcommand{\ba}{\begin{array}}
\newcommand{\ea}{\end{array}}
\newcommand{\von}{\vskip 1ex}
\newcommand{\vone}{\vskip 2ex}
\newcommand{\vtwo}{\vskip 4ex}
\newcommand{\dm}[1]{ {\displaystyle{#1} } }

\newcommand{\be}{\begin{equation}}
\newcommand{\ee}{\end{equation}}
\newcommand{\beano}{\begin{eqnarray*}}
\newcommand{\eeano}{\end{eqnarray*}}
\newcommand{\inp}[2]{\langle {#1} ,\,{#2} \rangle}
\def\bmatrix#1{\left[ \begin{matrix} #1 \end{matrix} \right]}
\def \noin{\noindent}
\newcommand{\evenindex}{\Pi_e}



\def \R{{\mathbb R}}
\def \C{{\mathbb C}}
\def \K{{\mathbb K}}
\def \J{{\mathbb J}}
\def \L{\mathcal{L}}

\def \T{{\mathbb T}}
\def \Pb{\mathrm{P}}
\def \N{{\mathbb N}}
\def \Ib{\mathrm{I}}
\def \Ls{{\Lambda}_{m-1}}
\def \Gb{\mathrm{G}}
\def \Hb{\mathrm{H}}
\def \Lam{{\Lambda}}
\def \Qb{\mathrm{Q}}
\def \Rb{\mathrm{R}}
\def \Mb{\mathrm{M}}
\def \norm{\nrm{\cdot}\equiv \nrm{\cdot}}

\def \P{{\mathbb{P}}_m(\C^{n\times n})}
\def \A{{{\mathbb P}_1(\C^{n\times n})}}
\def \H{{\mathbb H}}
\def \L{{\mathbb L}}
\def \G{{\F_{\tt{H}}}}
\def \S{\mathbb{S}}
\def \sigmin{\sigma_{\min}}
\def \elam{\Lambda_{\epsilon}}
\def \slam{\Lambda^{\S}_{\epsilon}}
\def \Ib{\mathrm{I}}
\def \Tb{\mathrm{T}}
\def \d{{\delta}}

\def \Lb{\mathrm{L}}
\def \N{{\mathbb N}}
\def \Ls{{\Lambda}_{m-1}}
\def \Gb{\mathrm{G}}
\def \Hb{\mathrm{H}}
\def \Delta{\triangle}
\def \Rar{\Rightarrow}
\def \p{{\mathsf{p}(\lam; v)}}

\def \D{{\mathbb D}}

\def \tr{\mathrm{Tr}}
\def \cond{\mathrm{cond}}
\def \lam{\lambda}
\def \sig{\sigma}
\def \sign{\mathrm{sign}}

\def \ep{\epsilon}
\def \diag{\mathrm{diag}}
\def \rev{\mathrm{rev}}
\def \vec{\mathrm{vec}}

\def \herm{\mathsf{Herm}}
\def \sym{\mathsf{sym}}
\def \odd{\mathsf{sym}}
\def \en{\mathrm{even}}
\def \rank{\mathrm{rank}}
\def \pf{{\bf Proof: }}
\def \dist{\mathrm{dist}}
\def \rar{\rightarrow}

\def \rank{\mathrm{rank}}
\def \pf{{\bf Proof: }}
\def \dist{\mathrm{dist}}
\def \Re{\mathsf{Re}}
\def \Im{\mathsf{Im}}
\def \re{\mathsf{re}}
\def \im{\mathsf{im}}
\def \sp{\mathsf{Spec}}

\def \sym{\mathsf{sym}}
\def \sksym{\mathsf{skew\mbox{-}sym}}
\def \odd{\mathrm{odd}}
\def \even{\mathrm{even}}
\def \herm{\mathsf{Herm}}
\def \skherm{\mathsf{skew\mbox{-}Herm}}
\def \str{\mathrm{ Struct}}
\def \eproof{$\blacksquare$}
\def \proof{\noin\pf}

\def \bS{{\bf S}}
\def \cA{{\cal A}}
\def \E{{\mathcal E}}
\def \X{{\mathcal X}}
\def \F{{\mathcal F}}
\def \cH{\mathcal{H}}
\def \cJ{\mathcal{J}}
\def \tr{\mathrm{Tr}}
\def \range{\mathrm{Range}}
\def \adj{\star}
\def \diag{\mbox{diag}}
\def \rank{\mbox{rank}}

\def \pal{\mathrm{palindromic}}
\def \palpen{\mathrm{palindromic~~ pencil}}
\def \palpoly{\mathrm{palindromic~~ polynomial}}
\def \odd{\mathrm{odd}}
\def \even{\mathrm{even}}

\title{Preserving spectral properties of structured matrices under structured perturbations}
\author{ Tinku Ganai\thanks{Department of Mathematics,
IIT Kharagpur, India, E-mail:
tinkuganaimath@gmail.com } \, and  \, Bibhas Adhikari\thanks{Corresponding author, Department of Mathematics,
IIT Kharagpur, India, E-mail:
bibhas@maths.iitkgp.ac.in }}
\date{}

\maketitle
\thispagestyle{empty}

\noindent{\bf Abstract.} 
This paper is devoted to the study of preservation of eigenvalues, Jordan structure and complementary invariant subspaces of structured matrices under structured perturbations. Perturbations and structure-preserving perturbations  are determined such that a perturbed matrix reproduces a given subspace as an invariant subspace and preserves a pair of complementary invariant subspaces of the unperturbed matrix. These results are further utilized to obtain structure-preserving perturbations which modify certain eigenvalues of a given structured matrix and reproduce a set of desired eigenvalues while keeping the Jordan chains unchanged. Moreover, a no spillover structured perturbation of a structured matrix is obtained whose rank is equal to the number of eigenvalues (including multiplicities) which are modified, and in addition, preserves the rest of the eigenvalues and the corresponding Jordan chains which need not be known.  The specific structured matrices considered in this paper form Jordan and Lie algebra corresponding to an orthosymmetric scalar product.\\

\noindent{\bf Keywords.} Structured eigenvalue problem, Jordan algebra, Lie algebra, structure preservation, Jordan chain\\


\section{Introduction}\label{sec:1}

A fundamental concern in matrix perturbation theory is to study the change of spectral properties  such as eigenvalues, eigenspaces and invariant subspaces of a matrix with respect to linear perturbation \cite{wilkinson1965algebraic} \cite{lidskii1966perturbation}  \cite{stewart1990matrix} \cite{moro1997lidskii} \cite{alam2003effect} \cite{rump2006eigenvalues} \cite{butta2014structured} \cite{Sun}. Condition number is a measure to quantify and analyze the sensitivity of such problems for structured and unstructured matrices \cite{karow2006structured} \cite{byers2006structured} \cite{trefethen1991pseudospectra} \cite{tisseur2003chart}. One of the interesting results on the preservation of eigenvalues ​​under perturbation is obtained by S. V. Savchenko as follows. An eigenvalue $\lam$ of a matrix $A$ is preserved as an eigenvalue of a perturbed matrix $A+B$ if the geometric multiplicity of $\lam$ is greater than the rank of $B$ \cite{savchenko2004change}. Besides, a necessary and sufficient condition on the entries of a perturbation is given under which the spectral properties of an eigenvalue change. Later, it is proved by Julio Moro and Frolian M. Dopico that except for a set of zero Lebesgue measure, a low rank perturbation $A + B$ of a matrix $A$ destroys for each of its eigenvalues exactly the $\rank (B)$ largest Jordan blocks of $A$, while the rest remain unchanged \cite{moro2003low}. A detailed study of behavior of Jordan structure of variety of structured matrices under rank-one structured perturbations is performed by Mehl-Mehrmann-Ran-Rodman in a series of papers, see \cite{mehl2011eigenvalue} \cite{mehl2012perturbation} \cite{mehl2014eigenvalue} and the references therein. Later in \cite{batzke2016generic} Batzke et al. have studied the effect of structured low rank perturbations on the Jordan structure of certain structured matrices. 

The main objective of this article is to determine structured perturbations of a structured matrix such that the perturbed matrices reproduce a given set of desired eigenvalues while maintaining the invariance of the Jordan structure of the corresponding unperturbed matrix. One of the first studies on the modification of an eigenvalue and preservation of the rest of the eigenvalues of a matrix under rank one perturbation is due to Brauer in 1952 as follows. 
 


\begin{theorem}(Brauer's Theorem (\cite{brauer1952limits},\cite{bru2012brauer}))
Let $A$ be an arbitrary $n\times n$ matrix with eigenvalues $\sig(A) = \{\lam_1, \lam_2, \hdots, \lam_n\}$. Let $x_k$ be an eigenvector of $A$ associated with the eigenvalue $\lam_k$, and let $q$ be any $n$-dimensional vector. Then the matrix $A+x_kq^T$ has eigenvalues $\{\lam_1,\hdots,\lam_{k-1},\mu_k=\lam_k+x_k^Tq, \lam_{k+1},\hdots, \lam_n\}.$
\end{theorem}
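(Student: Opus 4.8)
The plan is to reduce $A$ to block upper triangular form by a similarity transformation whose first basis vector is the eigenvector $x_k$, and then observe that in the new coordinates the rank-one update $x_k q^T$ perturbs only the first block row. Since similarity preserves the spectrum, reading off the diagonal blocks will give the result.

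First I would use that $A x_k = \lam_k x_k$ with $x_k \neq 0$ to extend $x_k$ to a basis of $\C^n$, assembling the invertible matrix $S = [\, x_k \mid S_2 \,]$ with $x_k$ as its first column. Because the first column of $AS$ equals $\lam_k x_k$, the transformed matrix takes the block form
\[
S^{-1} A S = \bmatrix{\lam_k & c^T \\ 0 & B}
\]
for some row vector $c^T$ and some $(n-1)\times(n-1)$ block $B$. Expanding the characteristic polynomial along this triangular structure yields $\det(A - \lam I) = (\lam_k - \lam)\,\det(B - \lam I)$, so the eigenvalues of $B$, counted with multiplicity, are exactly $\{\lam_1,\hdots,\lam_{k-1},\lam_{k+1},\hdots,\lam_n\}$.

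Next I would apply the same similarity to the perturbed matrix $A + x_k q^T$. Since $S^{-1} x_k = e_1$ (the first column of the identity) and $q^T S = [\, q^T x_k \mid q^T S_2 \,]$, the transformed update factors as $S^{-1} x_k q^T S = e_1 [\, q^T x_k \mid q^T S_2 \,]$, a matrix whose only nonzero entries lie in its first row. Adding it to $S^{-1} A S$ therefore leaves both the zero $(2,1)$ block and the block $B$ unchanged, altering only the $(1,1)$ entry and the remainder of the first block row:
\[
S^{-1}(A + x_k q^T) S = \bmatrix{\lam_k + q^T x_k & * \\ 0 & B}.
\]

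The conclusion then follows immediately: $A + x_k q^T$ is similar to this block triangular matrix and hence shares its spectrum, namely $\sig(B) \cup \{\lam_k + q^T x_k\}$. Since $q^T x_k$ is a scalar it equals its own transpose $x_k^T q$, which gives the shifted value $\mu_k = \lam_k + x_k^T q$, while the remaining eigenvalues $\lam_i$ for $i \neq k$ are reproduced exactly. I do not anticipate a genuine obstacle; the only point needing care is the bookkeeping of algebraic multiplicities, but this is handled automatically by the factorization of the characteristic polynomial, so that any repeated eigenvalue among the $\lam_i$ is preserved with its full multiplicity and, if $\lam_k$ itself is repeated, only a single copy is shifted to $\mu_k$.
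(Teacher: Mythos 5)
Your proof is correct and complete. Note, however, that the paper does not actually prove this statement: Brauer's theorem is quoted from the literature (\cite{brauer1952limits}, \cite{bru2012brauer}) as background, so there is no in-paper argument to compare against. Your similarity argument --- conjugating by $S=[\,x_k \mid S_2\,]$ to get a block upper triangular form, observing that $S^{-1}x_kq^TS=e_1[\,q^Tx_k \mid q^TS_2\,]$ touches only the first row, and reading the spectrum off the diagonal blocks --- is sound, and your handling of multiplicities via the factorization $\det(A-\lam I)=(\lam_k-\lam)\det(B-\lam I)$ is exactly right: one copy of $\lam_k$ is shifted to $\mu_k$ and the spectrum of $B$ is untouched. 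For comparison, the classical route (used in Brauer's original paper and in \cite{bru2012brauer}) computes the characteristic polynomial directly via the determinant identity $\det(\lam I-A-x_kq^T)=\det(\lam I-A)\bigl(1-q^T(\lam I-A)^{-1}x_k\bigr)$ together with $(\lam I-A)^{-1}x_k=(\lam-\lam_k)^{-1}x_k$, which yields the same factorization in one line but requires first arguing at $\lam$ away from the spectrum and then invoking continuity or polynomial identity; your block-triangularization avoids any invertibility caveat and is arguably the more elementary of the two.
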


It can further be verified that the perturbation given in Brauer's theorem has no effect on the eigenvectors corresponding to the eigenvalue which gets modified due to perturbation, but eigenvectors corresponding to other eigenvalues change \cite{saad1992numerical}. An alternative statement of Brauer's theorem is as follows \cite{chiang2013eigenvalue}.


\begin{theorem}\label{BrauerThm2}
Let $A$ be a matrix and $(\lam,v)$ be an eigenpair of $A.$ If $r$ is a vector so that $r^Tv=1$ then for any scalar $\mu,$ eigenvalues of the matrix $A+(\mu-\lam)vr^T,$ consists of those of $A,$  except that one eigenvalue $\lam$ of $A$ is replaced by $\mu.$ Moreover, the eigenvector $v$ is unchanged, that is, $(A+(\mu-\lam)vr^T)v=\mu v.$ 
\end{theorem}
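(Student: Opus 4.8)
The plan is to prove the two assertions separately. Write $B := A + (\mu-\lambda)vr^T$ for the perturbed matrix. The eigenvector claim is immediate: since $Av = \lambda v$ and $r^Tv = 1$, a single substitution gives
\[
Bv = Av + (\mu-\lambda)v(r^Tv) = \lambda v + (\mu-\lambda)v = \mu v,
\]
so $v$ survives as an eigenvector of $B$, now attached to the eigenvalue $\mu$. This already exhibits $\mu$ as a member of the spectrum of $B$.

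For the spectrum, the quickest route is to recognize the statement as a specialization of the preceding Brauer's Theorem. Taking the eigenpair $(\lambda,v)$ in place of $(\lambda_k,x_k)$ and choosing the free vector to be $q = (\mu-\lambda)r$, the rank-one perturbation $x_kq^T$ becomes exactly $(\mu-\lambda)vr^T$, and the modified eigenvalue predicted by that theorem is $\lambda + x_k^Tq = \lambda + (\mu-\lambda)(v^Tr)$. Since $v^Tr = r^Tv = 1$, this equals $\mu$, while every other eigenvalue of $A$ is left untouched; this is precisely the claim.

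Since the statement is presented as an alternative formulation of Brauer's Theorem, I would prefer to also give a self-contained derivation via the matrix determinant lemma, which makes the multiplicity bookkeeping explicit. For $z\notin\sigma(A)$ the matrix $A - zI$ is invertible, so Sylvester's determinant identity yields
\[
\det(B - zI) = \det(A - zI)\bigl(1 + (\mu-\lambda)\,r^T(A - zI)^{-1}v\bigr).
\]
Using $(A - zI)^{-1}v = (\lambda - z)^{-1}v$ together with $r^Tv = 1$, the bracketed factor simplifies to $(\mu - z)/(\lambda - z)$, whence $\det(B - zI) = \det(A - zI)\cdot(\mu - z)/(\lambda - z)$. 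Factoring one copy of $(\lambda - z)$ out of the characteristic polynomial $\det(A - zI) = \prod_i(\lambda_i - z)$ and cancelling it against the denominator replaces that single factor by $(\mu - z)$, which is exactly the asserted change in the spectrum.

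The one point that requires care --- and the only genuine obstacle --- is that the rational-function manipulation above is valid only for $z\notin\sigma(A)$, in particular not at $z=\lambda$ itself. I would close this gap by the standard polynomial-identity argument: labelling the eigenvalues so that $\lambda_1 = \lambda$, both $\det(B - zI)$ and $(\mu - z)\prod_{i\ge 2}(\lambda_i - z)$ are polynomials in $z$ that agree on the cofinite set $\C\setminus\sigma(A)$, hence agree identically. Consequently the computed factorization holds at every $z$, and in particular it correctly records the algebraic multiplicities of $A$ at $\lambda$ and $\mu$, confirming that exactly one copy of $\lambda$ is traded for $\mu$ and all remaining eigenvalues are preserved.
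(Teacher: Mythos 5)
Your proof is correct. Note, however, that the paper does not actually prove this statement: Theorem \ref{BrauerThm2} is quoted as a known alternative formulation of Brauer's theorem with a citation to Chiang and Lin, so there is no in-paper argument to compare yours against. Your verification that $Bv=\mu v$ is exactly the one-line computation the statement itself records, your reduction to the preceding Brauer's Theorem via $q=(\mu-\lambda)r$ is a legitimate derivation, and your self-contained argument via Sylvester's determinant identity, including the polynomial-identity step that extends $\det(B-zI)=(\mu-z)\prod_{i\ge 2}(\lambda_i-z)$ from $\C\setminus\sigma(A)$ to all of $\C$ so that multiplicities at $z=\lambda$ are handled correctly, is sound and is the standard way to prove this result.
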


Moreover, if the given matrix $A$ is symmetric then a structure preserving rank one perturbation can be given for which the conclusion of Theorem \ref{BrauerThm2} remains valid. For instance, setting $\Delta A=(\mu-\lam_k)x_kx_k^T$ where $(\lam_k,x_k)$ is the $k$th eigenpair of the matrix $A$ with $\|x_k\|_2=1,$ the conclusion of Theorem \ref{BrauerThm2} remains valid for the perturbed matrix $A+\Delta A$ \cite{bru2012brauer}. This result is well-known as Hotelling's deflation. Brauer's theorem has found applications to defining many matrices of interest with desired properties using a rank one update. For example, in order to ensure the existence and convergence of the Page Rank power method a stochastic matrix is updated by a rank one matrix to construct the Google matrix, which is a consequence of the Brauer's Theorem \cite{langville2004deeper}. It also plays an important role into the pole assignment of SISO linear time invariant control systems, solving matrix equations in particular quadratic equations \cite{meini2013shift}, QBD and M/G/1-type Markov chains \cite{guo2003comments} \cite{bini2005shift}, model updating problems for structural models \cite{zhang2018explicit}, and solving algebraic Riccati equations \cite{bini2012numerical} to name a few. .

A generalization of Brauer's theorem is known as Rado's theorem, in which several eigenvalues can be modified by a single perturbation into a desired set of eigenvalues ​​of the perturbed matrix while preserving the remaining eigenvalues of the unperturbed matrix. Rado's theorem was presented by Perfect in \cite{perfect1955methods} as follows. 

\begin{theorem}(Rado's Theorem)\label{RadoThm}
Let $A$ be an arbitrary $n\times n$ matrix. Let $\lam_i,  i=1,\hdots, n$ denote the eigenvalues of $A.$ Let $x_j, j=1,\hdots,p$ be a collection of linearly independent eigenvectors of $A$ corresponding to the eigenvalues $\lam_j.$ Then the matrix $\hat{A}=A+XC$ has eigenvalues $\mu_1,\hdots,\mu_p,\lam_{p+1},\hdots,\lam_n,$ where $\mu_j, j=1,\hdots,p$ are eigenvalues of of the matrix $\Omega + CX$ with $\Omega=\mbox{diag}(\lam_1,\hdots,\lam_p),$ $X=[x_1 \,\, x_2 \,\, \hdots \,\, x_p]$ and $C$ is an arbitrary matrix of order $p\times n.$ 
\end{theorem}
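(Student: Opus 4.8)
The plan is to realize $\range(X)$ as an invariant subspace shared by $A$ and $\hat A := A+XC$, and to show that both matrices act identically on a fixed complementary subspace; a block-triangularization then reads off the spectrum of each.

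First I would record the eigenvector relation in compact form. Since $Ax_j=\lam_j x_j$ for $j=1,\dots,p$, we have $AX = X\Omega$ with $\Omega=\diag(\lam_1,\dots,\lam_p)$. Consequently
\[
\hat A X = (A+XC)X = AX + X(CX) = X\Omega + X(CX) = X(\Omega+CX),
\]
which shows that $\range(X)$ is $\hat A$-invariant and that the compression of $\hat A$ to this subspace, expressed in the basis formed by the columns of $X$, is exactly $\Omega + CX$.

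Next, because $x_1,\dots,x_p$ are linearly independent, $X\in\C^{n\times p}$ has full column rank, so I can choose $Y\in\C^{n\times(n-p)}$ with $S=[\,X\ \ Y\,]$ invertible. I would then conjugate by $S$. Using $S^{-1}X=\begin{pmatrix} I_p\\ 0\end{pmatrix}$ together with $AX=X\Omega$ and $\hat A X = X(\Omega+CX)$, the first $p$ columns of $S^{-1}AS$ and of $S^{-1}\hat A S$ become $\begin{pmatrix}\Omega\\0\end{pmatrix}$ and $\begin{pmatrix}\Omega+CX\\0\end{pmatrix}$, respectively, so both conjugated matrices are block upper triangular with leading diagonal blocks $\Omega$ and $\Omega+CX$.

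The crucial point is that the trailing $(n-p)\times(n-p)$ diagonal blocks of $S^{-1}AS$ and $S^{-1}\hat A S$ coincide. This follows from
\[
S^{-1}\hat A Y = S^{-1}AY + S^{-1}XCY = S^{-1}AY + \begin{pmatrix} CY\\ 0\end{pmatrix},
\]
whose last $n-p$ rows agree with those of $S^{-1}AY$ since the correction term $\begin{pmatrix}CY\\0\end{pmatrix}$ has vanishing bottom block. Denoting this common trailing block by $W$, similarity invariance of the spectrum gives $\sig(A)=\sig(\Omega)\cup\sig(W)=\{\lam_1,\dots,\lam_p\}\cup\sig(W)$; since $\sig(A)=\{\lam_1,\dots,\lam_n\}$, necessarily $\sig(W)=\{\lam_{p+1},\dots,\lam_n\}$. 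Applying the same block-triangular reasoning to $\hat A$ yields $\sig(\hat A)=\sig(\Omega+CX)\cup\sig(W)=\{\mu_1,\dots,\mu_p,\lam_{p+1},\dots,\lam_n\}$, as claimed. The only subtlety to handle carefully is the bottom-block cancellation in the displayed identity for $S^{-1}\hat A Y$, which is precisely what guarantees that the perturbation leaves the complementary spectral data untouched.
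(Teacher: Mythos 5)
Your proof is correct and complete: the identities $\hat A X = X(\Omega+CX)$ and $S^{-1}XCY=\left(\begin{smallmatrix}CY\\0\end{smallmatrix}\right)$ do exactly what you claim, so both $S^{-1}AS$ and $S^{-1}\hat AS$ are block upper triangular with the \emph{same} trailing block $W$, and the multiset cancellation $\sig(W)=\sig(A)\setminus\sig(\Omega)=\{\lam_{p+1},\hdots,\lam_n\}$ is legitimate via the factorization of the characteristic polynomials. Note, however, that the paper itself states Rado's theorem only as background, citing Perfect (1955) without reproducing a proof, so there is no in-paper argument to compare against; your block-triangularization argument is the standard proof of this result and needs no repair.
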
 

It follows from the statement of Rado's theorem that the eigenvectors corresponding to the eigenvalues $\lam_j,$ $j=1,\hdots,p$ of $A$ need not be eigenvectors corresponding to the eigenvalues $\mu_j$ of $\hat{A},$ since $C$ can be chosen arbitrarily. Instead, under certain conditions, these eigenvectors can be invariant. For example, if $C$ is chosen such that $CX=\mbox{diag}(\mu_1-\lam_1, \mu_2-\lam_2, \hdots, \mu_p-\lam_p)$ then the eigenvectors $x_j$ corresponding to eigenvalues $\lam_j$ of $A$ remain eigenvectors corresponding to the eigenvalues $\mu_j$ of $\hat{A}.$ In this case the perturbation $CX$ has rank $p.$ For a symmetric matrix $A$ of order $n\times n,$ a symmetric perturbation given by $\hat{A}=A+X_p\Theta X_p^T$ preserve the  eigenvectors $x_j$ for the eigenvalues $\mu_j,$ $j=1,\hdots, p$ of $\hat{A}$ while maintaining the invariance of the remaining set of eigenvectors. Here  $\Theta = \mbox{diag}(\mu_1-\lam_1, \mu_2-\lam_2, \hdots, \mu_p-\lam_p)$ and $X_p=[x_1 \,\, x_2 \,\, \hdots \,\, x_p]$ is the matrix of orthonormal eigenvectors corresponding to eigenvalues $\lam_j, j=1,\hdots,p$ of $A.$

An important application of Rado's theorem is to solve inverse eigenvalue problems for nonnegative matrices \cite{soto2006applications} \cite{julio2020role}. Recently, a generalized version of Rado's theorem and hence Brauer's theorem is provided for matrix polynomials in \cite{bini2017generalization}.  An immediate consequence of Rado's theorem is as follows. Let $\mu_j, j=1,\hdots, p$ be a collection of scalars which are given beforehand. Then setting $C=(\diag(\mu_1, \hdots, \mu_p) - \Omega)X^\dagger + Z(I-XX^\dagger)$ it can be checked that $\mu_j$ becomes an eigenvalue of $\hat{A}=A+XC$ corresponding to eigenvector $x_j,$ $j=1,\hdots, p,$ where $Z$ is any arbitrary matrix of dimension $n\times n,$ and $X^\dagger$ denotes the Moore-Penrose pseudoinverse of $X$ while $I$ denotes the identity matrix of compatible size. However the eigenpairs $(\lam_{k}, x_k),$ $k=p+1, \hdots, n$ of $A$ need not be preserved as eigenpairs of the perturbed matrix $\hat{A}.$ Then from the view of perturbation theory the following problem can be formulated. Given $A, \mu_j$ and the  eigenpairs $(\lam_j,x_j), j=1,\hdots,p$ of $A,$ determine perturbations $\Delta A,$ if exists, such that the pairs $(\mu_j, x_j)$ become eigenpairs of the perturbed matrix $\hat{A}=A+\Delta A$ and $\hat{A}$ preserves the remaining eigenpairs of $A$ even if those are not known. Then we call such a perturbation $\Delta A$ as no spillover perturbation, a term which is frequently used in a similar context that arises in the theory of model updating problem for structural models defined by matrix polynomials \cite{adhikari2020updating}. 


Moreover, in practical applications it may so happen that the given matrix $A$ is a structured matrix. For example, in \cite{alam2011perturbation}, Alam et al. have considered the problem of finding structured perturbation of a Hamiltonian matrix such that the perturbed matrix modifies certain eigenvalues of the unperturbed matrix while preserving the rest of the eigenvalues. Then from the view of structured perturbation theory it is desirable to find no spillover perturbations $\Delta A$ such that the perturbed matrices $A+\Delta A$ have the same structure as that of $A.$ To be specific, if $\S$ denotes a set of structured matrices and $A\in \S$ then determine perturbations $\Delta A$ such that $A+\Delta A\in\S.$  Further, a perturbation $\Delta A_o$ is called a minimal perturbation if it has the smallest norm among all perturbations which satisfy the given property. In this paper, we consider Frobenius norm of a matrix. 

In this paper, we consider $\S,$ a set of linearly structured matrices described as follows. Let $H\in \K^{n \times n}$ be a unitary matrix with $H^\star=\epsilon H$ where $\epsilon \in\{1,-1\},\,\star \in \{T,*\}$ and $\K\in\{\R,\C\}.$ Here $*$ denotes the conjugate transpose, and $^T$ denotes transpose. Define the scalar product $\langle \cdot,\cdot \rangle_H$ on $\K^n$ by $\langle x,y \rangle_H:=y^{\star}Hx.$ Then for any given matrix $A\in \K^{n \times n}$ the adjoint of $A,$ denoted by $A^{[\star]}$ with respect to the scalar product $\langle \cdot,\cdot \rangle_H$ satisfies $$\langle Ax,y \rangle_H=\langle x, A^{[\star]}y \rangle_H, \,\,x,y \in \K^n.$$ 
Then it can be seen that 
\begin{align} \label{adjointA}
A^{[\star]}=H^{-1}A^{\star} H=\left\{
	\begin{array}{ll}
		H^{-1}A^T H,  & \mbox{bilinear form, }  \\
		H^{-1}A^* H, & \mbox{sesquilinear form }. 
	\end{array}
\right.
\end{align} 
The Lie Algebra and Jordan Algebra of matrices defined by $\langle \cdot,\cdot \rangle_H$ are given by 
\begin{eqnarray}\label{def:LJ} \L &=& \{A\in \K^{n \times n}\,:\,A^{[\star]}=-A\} \,\, \mbox{and}\\ \J &=& \{A\in \K^{n \times n}\,:\,A^{[\star]}=A\}, \end{eqnarray} respectively. Thus the set of structured matrices which we consider in this paper is $\S\in\{\L, \J\}.$ We mention that the set $\L$ or $\J$ provides a variety of structured matrices which arise in practical applications, for different choices of the matrix $H.$ For instance, if $H=\bmatrix{0 & I_n \\ I_n & 0}$ then $\J$ is the set of Hamiltonian, whereas $\L$ consists of all skew-Hamiltonian matrices. Besides, eigenpairs of $A\in\S$ preserve certain symmetries. Detailed discussions on these structured matrices and its eigenpair symmetries can be found in \cite{karow2006structured} \cite{LieJordan}, and the references therein.  

In this paper we are concerned with finding structured perturbations of a given structured matrix such that a perturbed matrix reproduces a desired set of eigenvalues, and it preserve the Jordan basis and rest of the eigenvalues of the unperturbed matrix. Consequently, such a perturbation preserves the sizes of Jordan blocks of the original matrix, and hence it extends Rado's theorem for structured matrices. However, eigenvalues of a structured matrix occur in pairs due to the symmetry in the structure of the matrix \cite{karow2006structured}. This imposes an additional condition on the scalars which have to be reproduced as eigenvalues of the perturbed matrix. 

Recall that a basis in $\K^n$ is called a Jordan basis of  a matrix $A\in \K^{n\times n}$ if $A$ has a block diagonal form $\diag (J(\lam_1), \hdots, J(\lam_k))$ with respect to this basis and $J(\lam_i)\in\K^{m_i\times m_i}$ is of the form $$\bmatrix{\lam_i & 1 & & & 0 \\ & \lam_i & 1 & & \\ & & \ddots & \ddots & \\ &&&\ddots  & 1 \\ 0 & &&& \lam_i}$$  such that $\sum_{i=1}^k m_i=n,$ where $\lam_i$s are eigenvalues of $A.$ The number of Jordan blocks corresponding to an eigenvalue $\lam_i$ is the geometric multiplicity of $\lam_i.$  A nonzero vector $v\in\K^n$ is called a generalized eigenvector corresponding to an eigenvalue $\lam$ of $A$ if $v\in\ker(A-\lam I)^l$ for the smallest positive integer $l.$ Then the set of linearly independent vectors $\{v, (A-\lam I)v, \hdots, (A-\lam I)^{l-1}v\}$ is called the Jordan chain corresponding to $v.$ A pair $(\lam, V)$ is called a Jordan pair of $A$ if  $V$ is the matrix whose columns are elements of the Jordan chain corresponding to a generalized eigenvector $v$ associated with an eigenvalue $\lam$ of $A.$ Obviously $AV=VJ(\lam)$ where dimension or size of the Jordan block $J(\lam)$ is same as the number of vectors in the associated Jordan chain, denoted by $\# (V)$ or $\# J(\lam).$ 


Thus we define the following problem. 

\noindent{\bf Problem 1 (P1)} \textit{Let $A\in\K^{n\times n},$ and $\lam_i^c,\lam_j^f, i=1,\hdots, p,\, j=p+1,\hdots ,s\leq n$ are distinct eigenvalues of $A.$  Suppose $\{(\lam^c_i ,X^c_{i,l})\}_{l=1}^{m_i}$ is the collection of Jordan pairs of $A$ corresponding to the eigenvalue $\lam_i^c,$  where $m_i$ is the geometric multiplicity of $\lam_i^c.$ Let $\{(\lam_k^f, X_{k,l}^f)\}_{l=1}^{m_k}$, $k=p+1,\hdots, s$ denote the remaining set of Jordan pairs of $A$ where $m_k$ is the geometric multiplicity of $\lam_k^f$ such that $\sum_{i=1}^{p} \sum_{l=1}^{m_i}\#(X^c_{i,l})+\sum_{k=p+1}^{s}\sum_{l=1}^{m_k} \#(X^f_{k,l})=n.$ }


\textit{Then for a given set of scalars $\{\lam_i^a : 1\leq i\leq p\},$ which is closed under desired eigenvalue pairing, determine perturbations $\Delta A$ of $A$ such that $(A+\Delta A)X_{i,l}^c=X_{i,l}^c J_l(\lam_i^a),$ $l=1,\hdots,m_i$ and $A+\Delta A$ preserve the Jordan pairs  $\{(\lam_k^f, X_{k,l}^f)\}_{l=1}^{m_k}$ where $J_l(\lam)$ denotes the Jordan block corresponding to $\lam$ of compatible size. Moreover, if $A\in \S\subset \K^{n\times n}$ then determine  perturbations $\Delta A$ which satisfy the above properties and $A+\Delta A\in \S.$   (The notations $^c, ^f, ^a$ stand for change, fixed, and aimed respectively.) }


\textit{If $(\lam_{k}^f, X_{k,l}^f), k=p+1, \hdots, s$ are not known then determine structured perturbations $\Delta A,$ if exists, which satisfy the condition above. Such a perturbation $\Delta A$ is called a no-spillover perturbation. }

Thus it follows that perturbations which solve {\bf (P1)}, do reproduce a desired set of eigenvalues in the perturbed matrices while preserving the Jordan bases, and hence sizes of the Jordan blocks of the unperturbed matrix. Besides, the perturbed matrices preserve the generalized eigenspaces of the original matrix which are invariant subspaces. Thus a general problem about invariant subspaces can be formulated as follows. Set
\beano \Lam_c &=& \diag(J(\lam_1^c),\hdots, J(\lam_p^c)) \, \mbox{where} \, J(\lam_i^c)=\diag(J_1(\lam_i^c), \, \hdots, J_{m_i}(\lam_i^c)),i=1,\hdots, p \\ 
\Lam_f &=& \diag(J(\lam_{p+1}^f),\hdots, J(\lam_s^f)) \, \mbox{where} \, J(\lam_k^f)=\diag(J_1(\lam_k^f), \, \hdots, J_{m_k}(\lam_k^f)), k=p+1,\hdots, s \\
X_c &=& \bmatrix{X_1^c & \hdots & X_p^c} \, \mbox{where} \, X_i^c=\bmatrix{X_{i,1}^c & \hdots & X_{i,m_i}^c} \\
X_f &=& \bmatrix{X_{p+1}^f & \hdots & X_s^f} \, \mbox{where} \, X_k^c=\bmatrix{X_{k,1}^c & \hdots & X_{k,m_k}^c}. 
 \eeano Then problem {\bf (P1)} can be reformulated as follows. If $A$ satisfies $AX_c=X_c\Lam_c$ and $AX_f=X_f\Lam_f,$ that is $$A\bmatrix{X_c & X_f} = \bmatrix{X_c & X_f}\diag(\Lam_c, \Lam_f)$$ then determine $\Delta A$ such that \begin{equation}\label{eqn:invp}(A+\Delta A)\bmatrix{X_c & X_f}=\bmatrix{X_c & X_f}\diag(\Lam_a, \Lam_f)\end{equation} for a given $\Lambda_{a}=  \diag(J(\lam_1^a),\hdots, J(\lam_p^a)) $ where $J(\lam_i^a)=\diag(J_1(\lam_i^a), \, \hdots, J_{m_i}(\lam_i^a)),$ $\lam_i^a\in\K,$ $i=1,\hdots, p$ and $\#J_l(\lam^a_i)=\# J_l(\lam_i^c),$ $l=1,\hdots, m_i.$

 Recall that a pair of matrices $(X, D)\in \K^{n\times p}\times \K^{p\times p}$ with $\rank(X)=p$ is called an invariant pair of a matrix $A\in\K^{n\times n}$ if $AX=XD.$ Then the range space of $X,$ denoted by $\mathfrak{R}(X),$ is an invariant subspace of $A.$ Thus the problem {\bf (P1)} can be extended into the framework of invariant pairs when the matrices $\Lam_c,\, \Lam_a$ and $\Lam_f$ need not be in block diagonal form with Jordan blocks as its diagonal entries. Further, from equation (\ref{eqn:invp}) it can be assumed that $\bmatrix{X_c & X_f}$ is a nonsingular matrix which generates the entire space $\K^n.$ We call two invariant pairs $(X_c,\Lam_c)$ and $(X_f,\Lam_f)$ of a matrix $A$ are complementary if $\bmatrix{X_c & X_f}$ is nonsingular, and hence $\mathfrak{R}(X_c) \oplus \mathfrak{R}(X_f)=\K^n,$ where $\oplus$ denotes the direct sum of vector spaces.  Then it follows from equation (\ref{eqn:invp}) that the required no spillover perturbations $\Delta A$ must satisfy  \begin{equation}\label{eqn:invp2} \bmatrix{X_c & X_f}^{-1}(A+\Delta A)\bmatrix{X_c & X_f}=\bmatrix{\Lam_a & 0 \\ 0 & \Lam_f}.\end{equation} Thus we define the following problem. \\

\noindent{\bf Problem 2 (P2)} \textit{Let $A\in\K^{n\times n}$ and $\mathcal{X}_c$ be a subspace of $\K^n$ of dimension $p\leq n.$ Then characterize the perturbations $\Delta A\in\K^{n\times n}$ of $A$ such that the perturbed matrices $A+\Delta A$ reproduce $\mathcal{X}_c$ as an invariant subspace. Otherwise, if $\mathcal{X}_c$ is an invariant subspace of $A,$ determine $\Delta A$ such that $\mathcal{X}_c$ remains invariant as an invariant subspace of $A+\Delta A.$ In addition, determine no spillover  perturbations $\Delta A,$ if exists such that $A+\Delta A$ preserve a given complementary pair of invariant subspaces of $A.$ Moreover, determine (minimal) structured perturbations $\Delta A$ which solve the above problem for a structured matrix $A.$}

Here we mention that extensive research has been performed in literature to quantify the effect of perturbation of a matrix on the invariant subspaces of the matrix \cite{Sun} \cite{byers2006structured}. While on the contrary, the problem {\bf (P2)} is concerned with determining (structure preserving) perturbations of a matrix such that a pair of complementary invariant subspaces of the matrix are preserved under (structured) perturbations of the matrix.    

The contribution of this work are as follows. First we consider {\bf (P2)}. Given an unstructured matrix $A\in\K^{n\times n}$ and a subspace $\mathcal{X}$ of $\K^n,$ we determine all perturbations $\Delta A$ such that $\mathcal{X}$ becomes an invariant subspace of $A+\Delta A.$ Utilizing this result parametric representation of all perturbations are obtained which preserve complementary invariant subspaces of a given matrix under perturbation. A necessary and sufficient condition is obtained that guarantees the existence of structure-preserving perturbations which perform the same task for a structured matrix. Consequently, structured perturbation and minimal structured perturbation are determined that preserve a pair of complementary invariant subspaces of a given structured matrix. In addition, if the given subspace is of dimension $p$ then a no spillover structured perturbation of rank less equal to $p$ is obtained for a structured matrix.  Next we consider {\bf (P1)}. Orthogonality properties of Jordan chains corresponding to eigenvalues of a structured matrix are derived. These properties are used to characterize all structured perturbations which modify a given set of eigenvalues of a structured matrix by replacing them with a desired set of scalars while keeping the Jordan structures of the those eigenvalues unchanged. Further, analytical expression of a structured no spillover perturbation of rank $p$ is obtained when the number of eigenvalues (including multiplicities) to be changed under perturbation is $p.$ Thereby it is shown that the sufficient condition for eigenvalue preservation based on rank of the perturbation and geometric multiplicity of the eigenvalue to be preserved as derived by  S. V. Savchenko in \cite{savchenko2004change} is not a necessary condition for eigenvalue preservation under structure-preserving perturbation of a structured matrix. \\

\textbf{Notation.} $\K$ denotes the field of real numbers $\R$ or the field of complex numbers $\C$. $i\R$ denotes the set of purely imaginary numbers. $\sigma(\Lam)$ denotes the spectrum (multi-set of eigenvalues) of $\Lam$ and $\|X\|_F$ denotes the Frobenius norm of $X$. For a matrix $X,$ $\mathfrak{R}(X)$ denotes the range space or column space of $X.$

\section{Preserving invariant subspaces under perturbations}
 
In this section we determine all perturbations of an unstructured matrix such that a given invariant subspace of the matrix is preserved under the perturbations, and we determine no spillover perturbations which inherit a pair of complementary invariant subspaces of the matrix. Next we extend this result to structured matrices, in which we first find a necessary and sufficient condition satisfying which structured perturbations can be obtained that preserve a given invariant subspace. Thus we consider problem {\bf (P2)} in this section.

\subsection{Unstructured case}

First we recall that given a pair $(X, B)\in\K^{n\times p}\times \K^{n\times p}$ there exists a matrix $A\in \K^{n\times n}$ which satisfies $AX=B$ if and only if $BX^{\dagger}X=B.$ If $X$ has full rank then this condition is obviously satisfied, and any such matrix $A$ is given by \begin{equation}\label{eqn:ls}A=BX^\dagger + Z(I-XX^\dagger),\end{equation} where $Z\in\K^{n\times n}$ is an arbitrary matrix and $I$ is the identity matrix of order $n$ \cite{AdhAlam}. Below we determine all perturbations of an unstructured matrix such that the corresponding perturbed matrices reproduce a given subspace as invariant subspace. Besides, we determine no spillover perturbations of a matrix which preserve complementary pair of invariant spaces of the matrix. Recall that $\mathcal{X}$ is a $p$ dimensional invariant subspace of a matrix $A\in\K^{n\times n}$ if and only if $AX=XD$ for some $D\in\K^{p\times p}$ where $\rank(X)=p$ such that $\mathfrak{R}(X)=\mathcal{X}.$

First we have the following proposition.

\begin{proposition}\label{Prop:uns1}
Let $A\in\K^{n\times n}$ and $\mathcal{X}_a$ be a subspace of $\K^n$ of dimension $p.$ Then any perturbation $\Delta A\in\K^{n\times n}$ such that $\mathcal{X}_a$ is an invariant subspace of $A+\Delta A$ is given by $$\triangle A=(X_a \Lambda_a-A X_a)X^{\dagger}_a+Z(I_n-X_aX_a^{\dagger})$$ for some $\Lam_a\in\K^{p\times p},$ where $X_a\in\K^{n\times p}$ is a full rank matrix such that $\mathfrak{R}(X_a)=\mathcal{X}_a, $ and $Z \in \K^{ n\times n}$ is arbitrary.
\end{proposition}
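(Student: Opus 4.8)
The plan is to reduce the geometric invariant-subspace condition to a linear matrix equation in the unknown $\Delta A$ and then invoke the solvability characterization recalled in equation \eqref{eqn:ls}. Fix a full rank $X_a\in\K^{n\times p}$ with $\mathfrak{R}(X_a)=\mathcal{X}_a$. The first step is to observe that $\mathcal{X}_a$ is an invariant subspace of $A+\Delta A$ if and only if $(A+\Delta A)X_a=X_a\Lambda_a$ for some $\Lambda_a\in\K^{p\times p}$; indeed, invariance means $(A+\Delta A)\mathfrak{R}(X_a)\subseteq\mathfrak{R}(X_a)$, so each column of $(A+\Delta A)X_a$ lies in $\mathfrak{R}(X_a)$ and is therefore a linear combination of the columns of $X_a$, the coefficient vectors forming the columns of a matrix $\Lambda_a$.

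Rearranging this identity gives the linear equation $\Delta A\,X_a=X_a\Lambda_a-AX_a$ in the unknown $\Delta A$, which is exactly of the form treated in equation \eqref{eqn:ls} with right-hand side $B:=X_a\Lambda_a-AX_a$. Since $X_a$ has full column rank we have $X_a^{\dagger}X_a=I_p$, so the consistency condition $BX_a^{\dagger}X_a=B$ holds automatically and the equation is solvable for every choice of $\Lambda_a$. Its general solution is precisely
$$\Delta A=(X_a\Lambda_a-AX_a)X_a^{\dagger}+Z(I_n-X_aX_a^{\dagger}),$$
with $Z\in\K^{n\times n}$ arbitrary, which is the asserted form.

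It then remains to confirm both inclusions. For the forward direction, if $\Delta A$ makes $\mathcal{X}_a$ invariant, then the matrix $\Lambda_a$ produced in the first step together with \eqref{eqn:ls} yields a representation of $\Delta A$ of the stated form for a suitable $Z$. For the converse, I would substitute the displayed formula back and use $X_a^{\dagger}X_a=I_p$ and $(I_n-X_aX_a^{\dagger})X_a=0$ to compute $(A+\Delta A)X_a=AX_a+(X_a\Lambda_a-AX_a)=X_a\Lambda_a$, so that $\mathcal{X}_a$ is indeed invariant under $A+\Delta A$. I do not expect a genuine obstacle here: the only points requiring care are the passage from the geometric invariance condition to the algebraic equation $(A+\Delta A)X_a=X_a\Lambda_a$, and the observation that full column rank of $X_a$ renders the solvability hypothesis of \eqref{eqn:ls} vacuous; the rest is a direct application of that characterization.
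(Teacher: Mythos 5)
Your proposal is correct and follows essentially the same route as the paper: reduce invariance of $\mathcal{X}_a$ to the linear equation $\Delta A\,X_a=X_a\Lambda_a-AX_a$ and apply the general-solution formula in equation (\ref{eqn:ls}), with full column rank of $X_a$ making the consistency condition automatic. Your explicit verification of both directions is slightly more detailed than the paper's proof but does not change the argument.
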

\begin{proof} Let  $X_a\in\K^{n\times p}$ be a full rank matrix such that $\mathfrak{R}(X_a)=\mathcal{X}_a.$ Then $\mathcal{X}_a$ is an invariant subspace of $A+\Delta A$ for some $\Delta A,$ if $(A+\Delta A)X_a=X_a\Lambda_a$ for some $\Lam_a\in\K^{p\times p}.$ This is satisfied if $\Delta A X_a=X_a\Lam_a-AX_a,$ in which $X_a$ and the residual matrix in the right hand side are known. This is a linear system of the form $AX=B,$ where $X,B$ are known and $A$ is unknown. Then the desired result follows from equation (\ref{eqn:ls}). $\hfill{\square}$
\end{proof}
Now we determine perturbations of a matrix
under which an invariant subspace of the matrix is preserved.
\begin{theorem}\label{Thm:uns1}
Let $A\in\K^{n\times n}.$ Suppose $\mathcal{X}_c$ is an invariant subspace of $A$ of dimension $p$ such that $AX_c=X_c\Lam_c$ where $X_c\in\K^{n\times p}$ is full rank such that $\mathfrak{R}(X_c)=\mathcal{X}_c$ and $\Lam_c\in\K^{p\times p}.$ Then any $\Delta A$ for which $\mathcal{X}_c$ remains an invariant subspace of $A+\Delta A$ is given by $$\Delta A=X_c(R\Lam_a-\Lam_cR)(X_cR)^\dagger +Z(I-X_cR(X_cR)^\dagger)$$ for some $\Lam_a\in\K^{p\times p}$ and nonsingular matrix $R\in\K^{p\times p}$, where $Z\in\K^{n\times n}$ is arbitrary.

In addition, let $\mathcal{X}_f$ be an invariant subspace of $A$ such that $\mathcal{X}_c \oplus \mathcal{X}_f=\K^n.$ Then a perturbation $\Delta A$ for which $\mathcal{X}_c$ and $\mathcal{X}_f$ are complementary invariant subspaces of $A+\Delta A$ is given by $$\triangle A=\bmatrix{X_a\hat{\Lambda}_a-AX_a & X_f\hat{\Lam}_f - AX_f}\bmatrix{X_a & X_f}^{-1}$$ for some $\hat{\Lam}_a\in\K^{p\times p}$ and $\hat{\Lam}_f\in\K^{(n-p)\times (n-p)}$, where $X_a$ and $X_f$ are full rank matrices such that $\mathfrak{R}(X_a)=\mathcal{X}_c$ and $\mathfrak{R}(X_f)=\mathcal{X}_f$ respectively. 
\end{theorem}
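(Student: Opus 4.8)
The plan is to derive both parts from Proposition \ref{Prop:uns1} together with the solution formula (\ref{eqn:ls}) for the linear matrix equation $AX=B$, so that essentially no new machinery is needed; the work is in fixing the right bases and simplifying residuals.

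For the first part, the central observation is that any full-rank $\tilde{X}\in\K^{n\times p}$ with $\mathfrak{R}(\tilde{X})=\mathcal{X}_c$ must be of the form $\tilde{X}=X_cR$ for some nonsingular $R\in\K^{p\times p}$, since $X_c$ and $\tilde{X}$ are two bases of the same $p$-dimensional space. Applying Proposition \ref{Prop:uns1} with the basis $\tilde{X}=X_cR$, every $\Delta A$ that makes $\mathcal{X}_c$ invariant under $A+\Delta A$ has the form $\Delta A=(\tilde{X}\Lam_a-A\tilde{X})\tilde{X}^{\dagger}+Z(I-\tilde{X}\tilde{X}^{\dagger})$ for some $\Lam_a$ and arbitrary $Z$. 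I would then invoke the hypothesis $AX_c=X_c\Lam_c$ to rewrite $A\tilde{X}=AX_cR=X_c\Lam_cR$, so the residual collapses to $\tilde{X}\Lam_a-A\tilde{X}=X_cR\Lam_a-X_c\Lam_cR=X_c(R\Lam_a-\Lam_cR)$. Substituting $\tilde{X}=X_cR$ into the pseudoinverse terms yields exactly the claimed expression. For the converse direction I would note that $(A+\Delta A)(X_cR)=(X_cR)\Lam_a$ holds by construction, so $\mathcal{X}_c=\mathfrak{R}(X_cR)$ is indeed invariant; this confirms the family is precisely the set of admissible perturbations, with the case $R=I$ already exhausting all of them and the general $R$ merely recording the freedom in the choice of basis.

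For the second part, complementarity $\mathcal{X}_c\oplus\mathcal{X}_f=\K^n$ guarantees that the $n\times n$ matrix $[X_a \,\, X_f]$ is nonsingular. Requiring that both subspaces be invariant under $A+\Delta A$ amounts to $(A+\Delta A)X_a=X_a\hat{\Lam}_a$ and $(A+\Delta A)X_f=X_f\hat{\Lam}_f$ for some $\hat{\Lam}_a,\hat{\Lam}_f$; stacking these columnwise gives the single equation $(A+\Delta A)[X_a \,\, X_f]=[X_a\hat{\Lam}_a \,\, X_f\hat{\Lam}_f]$. I would then isolate $\Delta A [X_a \,\, X_f]=[X_a\hat{\Lam}_a-AX_a \,\, X_f\hat{\Lam}_f-AX_f]$ and right-multiply by $[X_a \,\, X_f]^{-1}$ to obtain the stated formula. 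Here the linear system has coefficient matrix $[X_a \,\, X_f]$ of full (square) rank, so by (\ref{eqn:ls}) the term $Z(I-[X_a \,\, X_f][X_a \,\, X_f]^{\dagger})$ vanishes and the solution is unique; this is why, in contrast to the first part, no free matrix $Z$ survives. A short verification that the constructed $\Delta A$ reproduces $(A+\Delta A)[X_a \,\, X_f]=[X_a\hat{\Lam}_a \,\, X_f\hat{\Lam}_f]$ then shows both subspaces are invariant.

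I expect no serious obstacle, as both parts reduce to the already-established least-squares representation once the appropriate bases are fixed. The only point demanding care is the bookkeeping that reconciles the free parameters: verifying in the first part that letting $\Lam_a$ range over all of $\K^{p\times p}$ recovers every invariance-preserving $\Delta A$, and in the second part observing that invertibility of $[X_a \,\, X_f]$ forces uniqueness, so the displayed formula serves simultaneously as a construction and as a complete characterization.
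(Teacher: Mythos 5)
Your proposal is correct and follows essentially the same route as the paper: write any basis of $\mathcal{X}_c$ as $X_cR$, reduce to the linear system $\Delta A(X_cR)=X_c(R\Lam_a-\Lam_cR)$ solved via equation (\ref{eqn:ls}), and for the second part stack the two invariance conditions against the nonsingular matrix $[X_a\;\;X_f]$ so the projector term drops out. Your added observation that the parameter $R$ is redundant (since $(X_cR)^\dagger=R^{-1}X_c^\dagger$, taking $R=I$ and letting $\Lam_a$ vary already yields the full family) is accurate but not needed.
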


\begin{proof}
$\mathcal{X}_c$ is an invariant subspace of $A+\Delta A$ for some $\Delta A\in\K^{n\times n}$ if there exists a full rank matrix $X_a\in\K^{n\times p}$ such that $\mathfrak{R}(X_a)=\mathcal{X}_c$ and $(A+\Delta A)X_a=X_a\Lam_a$ for some $\Lam_a\in\K^{p\times p}.$ Further $X_a=X_cR$ for some nonsingular matrix $R\in\K^{p\times p}$ since $\mathfrak{R}(X_c)=\mathcal{X}_c.$ Then $AX_a=AX_cR=X_c\Lam_cR.$ Thus $\Delta A$ has to satisfy $$\Delta AX_a= X_a\Lam_a-AX_a=X_c(R\Lam_a-\Lam_cR).$$ Then the desired result follows using a similar argument as in Proposition \ref{Prop:uns1}. 

Next assume that $X_f\in\K^{n\times (n-p)}$ is a full rank matrix such that $\mathfrak{R}(X_f)=\mathcal{X}_f.$ Then any perturbation $\Delta A$ such that $\mathcal{X}_c, \mathcal{X}_f$ remain complementary invariant subspaces of $A+\Delta A$ if $$(A+\Delta A) X_a=X_a \hat{\Lambda}_a\,\, \mbox{and} \,\, (A+\Delta A)X_f=X_f\hat{\Lam}_f$$ for some $\hat{\Lam}_a\in\K^{p\times p}$ and $\hat{\Lam}_f\in\K^{(n-p)\times (n-p)}.$ For any arbitrary $\hat{\Lam}_a$ and $\hat{\Lam}_f,$ $\Delta A$ must satisfy $$\Delta AX_a=X_a\hat{\Lam}_a - AX_a, \,\, \mbox{and} \,\, \Delta AX_f=X_f\hat{\Lam}_f - AX_f,$$ which further implies $\Delta A X= \bmatrix{X_a\hat{\Lam}_a - AX_a & X_f\hat{\Lam}_f - AX_f}$ where $X=\bmatrix{X_a & X_f}$ which is a nonsingular matrix. Then the desired result follows by using equation (\ref{eqn:ls}). \hfill{$\square$}
\end{proof}

Moreover, if $\mathfrak{R}(X)=\mathcal{X}$ is an invariant subspace with $AX=XD$ where $X$ is a full rank matrix, then $\sig(A|\mathcal{X})=\sig(D)$ where $\sig(A|\mathcal{X})$ denotes the spectrum of the restriction of the matrix $A$ to the subspace $\mathcal{X}.$ Then we have the following observation from Theorem \ref{Thm:uns1}. If $\mathcal{X}_c$ is an invariant subspace of $A\in\K^{n\times n}$ with $AX_c=X_c \Lam_c, X_c\in\K^{n\times p},\, \Lam_c\in\K^{p\times p}$ then a set of perturbations $\Delta A\in\K^{n\times n}$ such that $\sig(A+\Delta A|\mathcal{X}_c)=\sig(A|\mathcal{X}_c)$ is given by $$\Delta A = (X_aP\Lam_c P^{-1} - AX_a)X_a^{\dagger} + Z(I-X_aX_a^\dagger)$$ where $P\in \K^{p\times p}$ is a nonsingular matrix, $X_a\in\K^{n\times p}$ is a full rank matrix such that $\mathfrak{R}(X_a)=\mathcal{X}_c,$ and $Z\in\K^{n\times n}$ is arbitrary. Indeed, assuming $(A+\Delta A)X_a=X_a(P\Lam_c P^{-1})$ the desired result follows.


\subsection{Structured case}

In this section we determine no spillover structured perturbations which reproduce a desired invariant subspace and preserve a given invariant subspace of a structured matrix. First, we briefly review the properties of invariant subspaces of a matrix $A\in\S$ where $\S\in\{\L, \J\}.$ Here, as defined in Section \ref{sec:1}, $\L$ and $\J$ are Lie and Jordan algebra defined by a scalar product $\langle\cdot,\cdot\rangle_H$ on $\K^n$ where $H$ is a unitary matrix and $H^{\star}= \epsilon_1 H,$ $\epsilon_1\in\{-1, 1\},$ $\star\in \{T,*\}.$ Besides, $A^{[\star]}=H^{-1} A^{\star}H=\epsilon_2 A,\, \star\in\{T, *\},$ where $\epsilon_2=1$ if $A\in \S=\J$, and $\epsilon_2=-1$ if $A\in \S=\L.$ Then we have the following proposition.


\begin{proposition} \label{PropA}
Let $A \in \S\subset\K^{n\times n}.$ Suppose $(X_j,\Lambda_j) \in \K^{n \times p_j} \times \K^{p_j \times p_j},\,j=1,2$ are  invariant pairs of $A.$ Then: 
\begin{itemize}
\item[(a)] $X_j^{\star}HAX_k=\epsilon_2 \Lambda_j^{\star} X_j^{\star}HX_k=X_j^{\star}HX_k\Lambda_k$ where $j,k \in \{1,2\},$
\item[(b)] $X_j^{\star}HX_k=0$ whenever $\sigma (\epsilon_2\Lambda_j^{\star}) \cap \sigma (\Lambda_k) =\emptyset.$
\end{itemize}
\end{proposition}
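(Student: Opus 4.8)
The plan is to derive both parts directly from the defining structure relation of $\S$ together with the invariant-pair equations $AX_j = X_j\Lambda_j$, reserving a standard Sylvester-equation argument for part (b). The only place the Jordan/Lie structure enters is through a single rewriting of the adjoint identity.

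First I would recast the structure identity $A^{[\star]} = H^{-1}A^{\star}H = \epsilon_2 A$ into the equivalent commutation form $HA = \epsilon_2 A^{\star}H$, obtained by multiplying through by $H$ on the left and using $\epsilon_2^{-1} = \epsilon_2$. This holds verbatim in both the bilinear case $\star = T$ and the sesquilinear case $\star = *$. For part (a) the rightmost equality is immediate: since $(X_k,\Lambda_k)$ is an invariant pair, $AX_k = X_k\Lambda_k$ gives $X_j^{\star}HAX_k = X_j^{\star}HX_k\Lambda_k$. For the middle equality I would insert the commutation form, $X_j^{\star}HAX_k = \epsilon_2 X_j^{\star}A^{\star}HX_k = \epsilon_2 (AX_j)^{\star}HX_k = \epsilon_2 \Lambda_j^{\star}X_j^{\star}HX_k$, where the last step uses $AX_j = X_j\Lambda_j$ and the fact that $\star$ distributes over the product $AX_j$ as $(X_j\Lambda_j)^{\star} = \Lambda_j^{\star}X_j^{\star}$. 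The only care needed is to confirm this distribution law is identical for transpose and conjugate transpose, which it is.

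For part (b) I would set $M = X_j^{\star}HX_k$ and read off from part (a) the relation $\epsilon_2 \Lambda_j^{\star}M = M\Lambda_k$, that is, the homogeneous Sylvester equation $(\epsilon_2 \Lambda_j^{\star})M - M\Lambda_k = 0$. A homogeneous Sylvester equation $PM - MQ = 0$ admits only the trivial solution $M = 0$ precisely when $\sigma(P)\cap\sigma(Q) = \emptyset$; applying this with $P = \epsilon_2\Lambda_j^{\star}$ and $Q = \Lambda_k$ yields $M = 0$ under the stated spectral disjointness. No step here constitutes a genuine obstacle; the proof is essentially bookkeeping. The one point to get exactly right is the placement of $\epsilon_2$ and the adjoint in the commutation step, so that the Sylvester coefficient matrices emerge as the claimed $\epsilon_2\Lambda_j^{\star}$ and $\Lambda_k$ rather than their transposes or unscaled versions.
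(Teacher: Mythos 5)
Your proposal is correct and follows essentially the same route as the paper: both derive the chain of equalities in (a) from the invariant-pair relations together with the rewritten structure identity $A^{\star}H=\epsilon_2 HA$ (equivalently $HA=\epsilon_2 A^{\star}H$), and both obtain (b) by reading off the homogeneous Sylvester equation $\epsilon_2\Lambda_j^{\star}M - M\Lambda_k=0$ for $M=X_j^{\star}HX_k$ and invoking spectral disjointness. The only cosmetic difference is that the paper takes $\star$ of the full equation $HAX_j=HX_j\Lambda_j$ (implicitly cancelling $\epsilon_1$ via $H^{\star}=\epsilon_1 H$), whereas you substitute the commutation relation inside $X_j^{\star}HAX_k$ directly, which avoids touching $\epsilon_1$ at all.
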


\begin{proof} Suppose $AX_j=X_j \Lambda_j$ and $AX_k=X_k \Lambda_k$ where $j,\,k\in \{1,2\}.$ Then premultiplying the previous equation by $H$ and operating $\star$ on both sides, $X_j^{\star}A^{\star}H=\Lambda_j^{\star}X_j^{\star}H$. Further postmultiplying it by $X_k$ and using $A^{\star}H= \epsilon_2 HA$ we obtain $X_j^{\star}HAX_k= \epsilon_2 \Lambda_j^{\star}X_j^{\star}HX_k.$ Finally, premultiplying $AX_k=X_k \Lambda_k$ by $X_j^{\star}H$ it follows that $X_j^{\star}HAX_k=X_j^{\star}HX_k\Lambda_k,$ and hence $(a)$ follows. Solving the Sylvester equation $\epsilon_2 \Lambda_j^{\star} X_j^{\star}HX_k-X_j^{\star}HX_k\Lambda_k=0,$ the desired result in $(b)$ follows.  $\hfill{\square}$
\end{proof}


We recall the following result from \cite{AdhAlam}.
\begin{theorem}[Theorem 3.1 and Theorem 3.3, \cite{AdhAlam}]\label{Thm1:AdhAlam}
Let $(X, B)\in \K^{n\times p} \times \K^{n\times p}.$ Then there exists a matrix $A\in\S$ such that $AX=B$ if and only if $BX^\dagger X=B$ and $X^\star HB=\epsilon_1\epsilon_2(X^\star HB)^\star.$ If this condition is satisfied then any such matrix $A$ is given by\footnote{Here we mention that there is a typo in [Section 3, Page 4, \cite{AdhAlam}], the correct expression of $\mathcal{F}_*(X,B)$ should be $BX^\dagger - (BX^\dagger)^* + X^\dagger(X^*B)^*X^\dagger$ if $(X^*B)^*=-X^*B.$ }
 \begin{equation} \label{invrA} 
A =BX^\dagger + \epsilon_1\epsilon_2H^{-1} \left[ (HBX^\dagger)^\star - (X^\dagger)^\star (X^\star HB)^\star X^\dagger\right]  + H^{-1} (I-XX^\dagger)^\star Z(I-XX^\dagger) \end{equation}
where $Z\in\K^{n\times n}$ such that $Z^\star=\epsilon_1\epsilon_2 Z.$ Further, the unique minimal perturbation $A_o\in \S$ for which $\|A_o\|_F = \min\{\|A\|_F : A\in \S,\, AX=B\}$ is given by $$A_o=BX^\dagger + \epsilon_1\epsilon_2H^{-1} \left[ (HBX^\dagger)^\star - (X^\dagger)^\star (X^\star HB)^\star X^\dagger\right]$$ which is obtained by setting $Z=0$ in equation (\ref{invrA}).
\end{theorem}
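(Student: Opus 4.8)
The plan is to reduce the structured problem to an ordinary symmetric matrix equation through the substitution $M := HA$, which is reversible since $H$ is unitary (so that $\|A\|_F = \|M\|_F$). Writing $\delta := \epsilon_1\epsilon_2$ and $\tilde B := HB$, the condition $A \in \S$, i.e. $A^{[\star]} = H^{-1}A^\star H = \epsilon_2 A$, is equivalent to $M^\star = \delta M$, while $AX = B$ becomes $MX = \tilde B$. In these terms the two hypotheses read $\tilde B X^\dagger X = \tilde B$ and $(X^\star \tilde B)^\star = \delta\, X^\star \tilde B$, which are exactly $BX^\dagger X = B$ and $X^\star HB = \delta(X^\star HB)^\star$ after undoing the substitution. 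Necessity is then a direct computation: $BX^\dagger X = B$ follows from $B = AX = A(XX^\dagger X) = BX^\dagger X$, and using $HA = \epsilon_2 A^\star H$ together with $H^\star = \epsilon_1 H$ gives $X^\star HB = X^\star HAX = \epsilon_2(AX)^\star HX = \epsilon_2 B^\star HX = \epsilon_1\epsilon_2(X^\star HB)^\star$.

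For sufficiency and the parametrization I would first verify that the displayed $A$ with $Z=0$, equivalently $M_o := \tilde B X^\dagger + \delta(\tilde B X^\dagger)^\star - \delta(X^\dagger)^\star(X^\star\tilde B)^\star X^\dagger$, is one particular solution. The identity $M_o X = \tilde B$ follows from $XX^\dagger X = X$ and the first hypothesis, while $M_o^\star - \delta M_o$ collapses (after applying $\star$ termwise and using $\delta^2=1$) to $(X^\dagger)^\star[(X^\star\tilde B)^\star - \delta\, X^\star\tilde B]X^\dagger$, which vanishes precisely by the second hypothesis. Since $\S$ is a linear subspace, every solution differs from $A_o := H^{-1}M_o$ by some $D \in \S$ with $DX = 0$; equivalently $N := HD$ satisfies $N^\star = \delta N$ and $NX = 0$. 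Setting $Q := I - XX^\dagger$, the equation $NX = 0$ is equivalent to $N = NQ$, and applying $\star$ together with $N^\star = \delta N$ yields $N = Q^\star N$, hence $N = Q^\star N Q$; conversely any $N = Q^\star Z Q$ with $Z^\star = \delta Z$ is of this form (using $Q^2 = Q$, $(Q^\star)^2 = Q^\star$, $(Q^\star)^\star = Q$). Transporting back by $H^{-1}$ reproduces exactly the homogeneous term $H^{-1}(I-XX^\dagger)^\star Z(I-XX^\dagger)$.

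For minimality the solution set is the affine space $M_o + \{Q^\star Z Q : Z^\star = \delta Z\}$, so it suffices to show $M_o$ is Frobenius-orthogonal to the homogeneous subspace, i.e. $\tr(M_o^* Q^\star Z Q) = \tr(Q M_o^* Q^\star Z) = 0$, which I would get from $Q M_o^* Q^\star = 0$. The two terms of $M_o^*$ that begin with $(X^\dagger)^*$ are annihilated on the left because $Q(X^\dagger)^* = 0$ (as $XX^\dagger$ is the orthogonal projector onto $\mathfrak{R}(X) = \mathfrak{R}((X^\dagger)^*)$), and the remaining term ending in $((X^\dagger)^\star)^*$ is annihilated on the right since $((X^\dagger)^\star)^* Q^\star = 0$ — directly from $X^\dagger Q = 0$ when $\star = *$, and from $\overline{X^\dagger}\,\overline{Q} = \overline{X^\dagger Q} = 0$ when $\star = T$, using that $Q^* = Q$ forces $Q^\star = Q^T = \overline{Q}$. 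Then $\|M_o + N\|_F^2 = \|M_o\|_F^2 + \|N\|_F^2$, so the minimum is attained uniquely at $N = 0$, giving $A_o = H^{-1}M_o$.

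The step I expect to be most delicate is the bookkeeping of the two involutions in play: the Moore–Penrose inverse is built from the conjugate transpose $*$, whereas the structure uses $\star \in \{T,*\}$. In the bilinear case $\star = T$ over $\C$ the projector $XX^\dagger$ is Hermitian but not symmetric, so $(I-XX^\dagger)^\star \neq I-XX^\dagger$; this is exactly why the homogeneous term must carry $(I-XX^\dagger)^\star$ rather than $(I-XX^\dagger)$, and it is the only place where the $T$ and $*$ cases genuinely differ. Keeping the relation $Q^\star = \overline{Q}$ (valid because $Q$ is an orthogonal projector) on hand is what lets both the parametrization and the orthogonality computation go through uniformly.
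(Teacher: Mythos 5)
Your proof is correct, and I can verify every step: the reduction $M=HA$ (legitimate since $H$ is unitary, so $\|A\|_F=\|M\|_F$ and $A\in\S$ iff $M^\star=\epsilon_1\epsilon_2M$), the verification that $M_oX=\tilde B$ and $M_o^\star-\epsilon_1\epsilon_2M_o$ collapses to $(X^\dagger)^\star[(X^\star\tilde B)^\star-\epsilon_1\epsilon_2X^\star\tilde B]X^\dagger$, the parametrization $N=Q^\star NQ$ of the homogeneous solutions, and the orthogonality $QM_o^*Q^\star=0$ (including the careful handling of $Q^\star=\overline{Q}$ in the bilinear case) all check out. Note that the paper itself gives no proof of this statement --- it is quoted verbatim from Theorems 3.1 and 3.3 of the cited reference --- and your argument follows the same standard route used there (reduce to the self-/skew-adjoint mapping problem for the identity scalar product via $HA$, then split off the homogeneous part and use Frobenius orthogonality for minimality), so there is nothing to flag beyond confirming it is a complete, self-contained derivation.
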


First we determine all structured perturbations of a structured matrix for which a subspace becomes an invariant subspace of the updated matrix.
\begin{proposition}\label{invrDeltaA}
Let $A\in \S \subset \K^{n \times n}$ and $\mathcal{X}_a$ be a subspace of $\K^n$ of dimension $p$. Then $\mathcal{X}_a$ is an invariant subspace of $A+\Delta A$ for some $\Delta A\in \S$ if and only if there exists a matrix $\Lam_a\in \K^{p \times p}$ such that $X_a^\star HX_a \Lam_a=\epsilon_2 \Lam_a^\star X_a^\star HX_a$ where $X_a\in \K^{n \times p}$ is a full column rank matrix with $\mathfrak{R}(X_a)=\mathcal{X}_a.$ If such a matrix $\Lam_a$ exists then any such perturbation $\Delta A\in \S$ is given by 
\beano
\Delta A&=&(X_a\Lam_a-AX_a)X_a^{\dagger}+\epsilon_1\epsilon_2H^{-1} [\left(H(X_a\Lam_a-AX_a)X_a^{\dagger} \right)^\star-(X_a^{\dagger})^\star (X_a^{\star}HX_a\Lam_a\\&&-X_a^{\star}HAX_a)^\star X_a^{\dagger} ]+H^{-1}(I-X_aX_a^{\dagger})Z(I-X_aX_a^{\dagger})
\eeano 
where $Z\in \K^{n \times n}$ with $Z^\star=\epsilon_1 \epsilon_2Z$ such that $(A+\Delta A)X_a=X_a\Lam_a$. In particular, setting $Z=0,$ the matrix $\Delta A=\Delta A_o$ provides the minimal perturbation. 

 
\end{proposition}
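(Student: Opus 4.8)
The plan is to reduce the problem to the solvability, within $\S$, of a single linear matrix equation and then apply Theorem \ref{Thm1:AdhAlam}. Fix a full column rank $X_a\in\K^{n\times p}$ with $\mathfrak{R}(X_a)=\mathcal{X}_a$. Then $\mathcal{X}_a$ is an invariant subspace of $A+\Delta A$ precisely when there is some $\Lam_a\in\K^{p\times p}$ with $(A+\Delta A)X_a=X_a\Lam_a$, equivalently $\Delta A\,X_a = X_a\Lam_a - AX_a =: B$. Hence the existence of a structured perturbation reproducing $\mathcal{X}_a$ is exactly the existence of $\Delta A\in\S$ solving $\Delta A\,X_a=B$, which is the setting of Theorem \ref{Thm1:AdhAlam} with $X=X_a$ and $B=X_a\Lam_a-AX_a$.

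First I would dispose of the range condition $B X_a^\dagger X_a = B$ of that theorem: since $X_a$ has full column rank, $X_a^\dagger X_a=I_p$, so this holds automatically and carries no information. The entire content therefore sits in the structure condition $X_a^\star HB = \epsilon_1\epsilon_2\,(X_a^\star HB)^\star$, and simplifying it is the main computational obstacle. Expanding $X_a^\star HB = X_a^\star HX_a\Lam_a - X_a^\star HAX_a$ and taking the $\star$-adjoint, I would use the two structural identities $H^\star=\epsilon_1 H$ and $A^\star H=\epsilon_2 HA$ (the latter being $A^{[\star]}=\epsilon_2 A$, i.e. $A\in\S$) to get $(X_a^\star HX_a)^\star=\epsilon_1 X_a^\star HX_a$ and $(X_a^\star HAX_a)^\star=\epsilon_1\epsilon_2\,X_a^\star HAX_a$. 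Substituting these and tracking signs via $\epsilon_1\epsilon_2\cdot\epsilon_1=\epsilon_2$ and $(\epsilon_1\epsilon_2)^2=1$, the two copies of $X_a^\star HAX_a$ cancel and the condition collapses to $X_a^\star HX_a\Lam_a=\epsilon_2\Lam_a^\star X_a^\star HX_a$, exactly the stated criterion. This cancellation of the $A$-dependent term is the crux: it is what makes the solvability criterion intrinsic to $\Lam_a$ and the Gram matrix $X_a^\star HX_a$, independent of $A$ beyond its membership in $\S$.

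Both implications then follow from Theorem \ref{Thm1:AdhAlam}. If such a $\Lam_a$ exists, both hypotheses of that theorem hold for $B=X_a\Lam_a-AX_a$, so there is a $\Delta A\in\S$ with $\Delta A\,X_a=B$, whence $(A+\Delta A)X_a=X_a\Lam_a$ and $\mathcal{X}_a$ is invariant under $A+\Delta A$; conversely, the existence of such a structured $\Delta A$ forces the structure condition, hence the criterion on $\Lam_a$. Finally, to obtain the explicit parametrization I would substitute $X=X_a$ and $B=X_a\Lam_a-AX_a$ directly into formula (\ref{invrA}), using $X_a^\star HB=X_a^\star HX_a\Lam_a-X_a^\star HAX_a$ and retaining the free parameter $Z$ with $Z^\star=\epsilon_1\epsilon_2 Z$; setting $Z=0$ yields the minimal perturbation $\Delta A_o$ by the minimality assertion of the same theorem. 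I expect no essential difficulty in this last step beyond careful bookkeeping, since the expression is a verbatim specialization of (\ref{invrA}).
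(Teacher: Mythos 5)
Your proposal is correct and follows essentially the same route as the paper: reduce to the structured mapping equation $\Delta A\,X_a=X_a\Lam_a-AX_a$, discard the automatically satisfied range condition, and use $H^\star=\epsilon_1 H$ together with $A^\star H=\epsilon_2 HA$ to cancel the $X_a^\star HAX_a$ term and collapse the structure condition of Theorem \ref{Thm1:AdhAlam} to $X_a^\star HX_a\Lam_a=\epsilon_2\Lam_a^\star X_a^\star HX_a$. The parametrization and the minimality claim are then, as you say, a direct specialization of equation (\ref{invrA}).
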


\begin{proof}
Note that $\mathcal{X}_a$ is an invariant subspace of $A+\Delta A$ if $(A+\Delta A)\mathcal{X}_a \subset \mathcal{X}_a$, that is if there exist a matrix $\Lambda_a\in \K^{p \times p}$ such that $(A+\Delta A)X_a=X_a\Lambda_a.$ Therefore $\Delta A$ must satisfy 
\begin{equation}\label{invDelA}
\Delta AX_a=X_a\Lam_a-AX_a.
\end{equation}
From Theorem \ref{Thm1:AdhAlam} it follows that equation $(\ref{invDelA})$ has a solution $\Delta A\in \S$ if and only if \begin{eqnarray}
\label{invr_cond1}
\left(X_a\Lam_a-AX_a \right)X_a^{\dagger}X_a=\left(X_a\Lam_a-AX_a \right) \,\,\mbox{and}\\
\label{invr_cond2}
\left(X_a^\star HX_a\Lam_a-X_a^\star HAX_a \right)=\epsilon_1 \epsilon_2 \left(X_a^\star HX_a\Lam_a-X_a^\star HAX_a \right)^\star.
\end{eqnarray}
As $X_a\in \K^{n \times p}$ is a full column rank matrix so $X_a^{\dagger}=(X_a^\star X_a)^{-1}X_a^\star$ hence condition $(\ref{invr_cond1})$ holds. Further $A\in \S$ that is $(HA)=\epsilon_1\epsilon_2(HA)^\star$ so condition $(\ref{invr_cond2})$ holds if and only if $X_a^\star HX_a\Lam_a=\epsilon_2\Lam_a^\star X_a^\star HX_a$. The rest follows from Theorem \ref{Thm1:AdhAlam}.$\hfill{\square}$
\end{proof}

Now we determine structured perturbations of a Jordan or Lie algebra structured matrix under which an invariant subspace of the matrix is preserved. In addition, under certain assumptions, no spillover perturbations are obtained which preserve complementary invariant subspaces of the matrix, one of which need not be known. 

\begin{theorem}\label{updateA}
Suppose $\S\in\{\L, \J\}$ and $\star\in\{T, *\}.$ Let $A\in\S\subset\K^{n\times n}.$ Suppose $\mathcal{X}_c$ is an invariant subspace of $A$ of dimension $p\leq n$ such that $AX_c=X_c\Lam_c$ where $X_c\in\K^{n\times p}$ is a full rank matrix with $\mathfrak{R}(X_c)=\mathcal{X}_c$ and $\Lam_c\in\K^{p\times p}.$ Then $\mathcal{X}_c$ remains an invariant subspace of $A+\Delta A$ for some matrix $\Delta A\in\S$ if and only if there exist a matrix $\Lam_a\in\K^{p\times p}$ such that $$R^\star (X_c^\star HX_c)R\Lam_a=\epsilon_2\Lam_a^\star R^\star (X_c^\star HX_c) R$$ for some nonsingular matrix $R\in\K^{p\times p}.$  

If such a matrix $\Lam_a$ exists then any such perturbation $\Delta A\in \S$ is given by 
\beano \Delta A &=&  X_c\widetilde{R} (X_cR)^\dagger  +\epsilon_1\epsilon_2H^{-1}\left[(HX_c \widetilde{R} (X_cR)^\dagger)^\star - ((X_cR)^\dagger)^\star(R^\star X_c^\star HX_c  \widetilde{R})^\star (X_cR)^\dagger\right] \\ &&  + H^{-1} \left(I-(X_cR)(X_cR)^\dagger\right) Z \left(I-(X_cR)(X_cR)^\dagger\right)\eeano such that $(A+\Delta A)(X_cR)=(X_cR)\Lam_a,$ where $Z\in\K^{n\times n}$ satisfies $Z^\star=\epsilon_1\epsilon_2 Z$ and $\widetilde{R}=R\Lam_a-\Lam_c R.$ 
\end{theorem}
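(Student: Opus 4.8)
The plan is to deduce the whole statement from Proposition \ref{invrDeltaA}, because asking that $\mathcal{X}_c$ remain an invariant subspace of $A+\Delta A$ is exactly the situation treated there with $\mathcal{X}_a=\mathcal{X}_c$; the only new feature is that one must work with an arbitrary full-rank basis of $\mathcal{X}_c$ rather than with $X_c$ itself. First I would note that, since $X_c$ is full rank with $\mathfrak{R}(X_c)=\mathcal{X}_c$, a matrix $X_a\in\K^{n\times p}$ is a full column rank matrix with $\mathfrak{R}(X_a)=\mathcal{X}_c$ if and only if $X_a=X_cR$ for some nonsingular $R\in\K^{p\times p}$. Hence the existence of $\Delta A\in\S$ keeping $\mathcal{X}_c$ invariant is, by Proposition \ref{invrDeltaA} applied to the basis $X_a=X_cR$, equivalent to the existence of $\Lam_a\in\K^{p\times p}$ and a nonsingular $R$ with $X_a^\star HX_a\Lam_a=\epsilon_2\Lam_a^\star X_a^\star HX_a$.

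Substituting $X_a=X_cR$ and using $(X_cR)^\star H(X_cR)=R^\star(X_c^\star HX_c)R$, this condition becomes precisely $R^\star(X_c^\star HX_c)R\Lam_a=\epsilon_2\Lam_a^\star R^\star(X_c^\star HX_c)R$, which is the asserted equivalence. For the forward implication I would take $X_a$ to be any basis of $\mathcal{X}_c$ realizing the invariance under $A+\Delta A$, writing it as $X_cR$; for the reverse I would take $X_a=X_cR$ for the $R$ supplied by the hypothesis. In either case Proposition \ref{invrDeltaA} produces a $\Delta A\in\S$ with $(A+\Delta A)X_a=X_a\Lam_a$, that is $(A+\Delta A)(X_cR)=(X_cR)\Lam_a$.

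To recover the explicit formula I would substitute $X_a=X_cR$ into the perturbation expression of Proposition \ref{invrDeltaA} and simplify with the invariance relation $AX_c=X_c\Lam_c$. This turns the residual into $X_a\Lam_a-AX_a=X_cR\Lam_a-X_c\Lam_cR=X_c\widetilde{R}$ with $\widetilde{R}=R\Lam_a-\Lam_cR$, and the inner matrix into $X_a^\star HX_a\Lam_a-X_a^\star HAX_a=R^\star(X_c^\star HX_c)\widetilde{R}$; together with $X_a^\dagger=(X_cR)^\dagger$ and $X_aX_a^\dagger=(X_cR)(X_cR)^\dagger$ these give exactly the displayed $\Delta A$, where $Z$ ranges over matrices with $Z^\star=\epsilon_1\epsilon_2Z$. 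Since Proposition \ref{invrDeltaA} already characterizes \emph{all} such perturbations for each fixed basis, letting $(R,\Lam_a,Z)$ vary yields every $\Delta A\in\S$ that keeps $\mathcal{X}_c$ invariant, which justifies the ``any such perturbation'' claim.

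The calculations are routine; the only point that deserves care is the quantifier on the nonsingular matrix $R$. Here I would note that the solvability of the condition does not actually depend on $R$: writing $N=R^\star(X_c^\star HX_c)R$ and replacing $R$ by $RS$ sends $N\mapsto S^\star NS$ while $\Lam_a\mapsto S^{-1}\Lam_aS$ preserves $N\Lam_a=\epsilon_2\Lam_a^\star N$. Thus ``for some nonsingular $R$'' could be read as ``for $R=I$'', and the apparent freedom in $R$ merely encodes the freedom in the choice of a basis of $\mathcal{X}_c$; it enlarges the parametrized family of perturbations but introduces no extra existence requirement.
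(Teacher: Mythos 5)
Your proposal is correct and follows essentially the same route as the paper: both reduce the problem to solving $\Delta A(X_cR)=X_c(R\Lam_a-\Lam_cR)$ over $\S$ and read off the solvability condition and formula from the structured mapping result, the only cosmetic difference being that you route through Proposition \ref{invrDeltaA} while the paper invokes Theorem \ref{Thm1:AdhAlam} directly and uses Proposition \ref{PropA}(a) to discard the $\Lam_c$-term from the symmetry condition. Your closing observation that the quantifier over $R$ adds no extra existence requirement is a correct and worthwhile clarification not made explicit in the paper.
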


\begin{proof}
Note that $\mathcal{X}_c$ is an invariant subspace of $A+\Delta A$ for some $\Delta A\in\S$ if there exists a full rank matrix $X_a\in\K^{n\times p}$ such that $(A+\Delta A)X_a=X_a\Lam_a$ where $\mathfrak{R}(X_a)=\mathcal{X}_c$ and $\Lam_a\in\K^{p \times p}.$ Then $X_a=X_cR$ for some nonsingular matrix $R\in\K^{p \times p}$ since $\mathfrak{R}(X_c)=\mathcal{X}_c.$ Consequently, $AX_a=AX_cR=X_c\Lam_cR.$ Then the desired $\Delta A$ has to satisfy  \begin{equation}\label{eqn:adh}\Delta AX_cR=X_c(R\Lam_a -\Lam_c R).\end{equation} Then by Theorem \ref{Thm1:AdhAlam} there exists a $\Delta A\in\S$ which satisfies the equation (\ref{eqn:adh}) if and only if \begin{equation} \label{eqn:cond22} R^\star (X_c^\star HX_c)(R\Lam_a-\Lam_c R)=\epsilon_1\epsilon_2\left(R^\star (X_c^\star HX_c)(R\Lam_a-\Lam_c R)\right)^\star. \end{equation} 

Moreover, since $A\in \S$ and $AX_c=X_c\Lam_c$ so by part $(a)$ of Proposition \ref{PropA} we have $X^{\star}_cHX_c \Lam_c=\epsilon_1\epsilon_2 \left(X^{\star}_cHX_c \Lam_c\right)^\star$, and hence the condition $(\ref{eqn:cond22})$ reduces to, \begin{equation} R^\star X_c^\star HX_cR\Lam_a=\epsilon_2\Lam_a^{\star}R^{\star} X_c^\star HX_cR \end{equation} and therefore the desired result follows. \hfill{$\square$}
\end{proof}

Next we determine no spillover structured perturbations which preserve complementary invariant subspaces of a structured matrix one of which need not be known.

\begin{theorem} \label{updateA_nospillover}
Let $\mathcal{X}_c$ be an invariant subspace of a matrix $A\in \S\subset \K^{n\times n}$ of dimension $p$. Let $X_c$ be a full rank matrix such that $AX_c=X_c\Lam_c$ and $\mathfrak{R}(X_c)=\mathcal{X}_c$ with $\Lam_c\in \K^{p \times p}$. Choose a matrix $\Lam_a\in\K^{p \times p}$ such that $X_c^{\star}HX_c \Lambda_a=\epsilon_1\epsilon_2 (X_c^{\star}HX_c \Lambda_a)^\star.$ Then a perturbation $\Delta A\in \S$ such that the complementary invariant subspaces $\mathcal{X}_c$ and $\mathcal{X}_f$ of $A$ are preserved as complementary invariant subspaces of $A+\Delta A,$ is given by  $$\Delta A = BX^{-1} + \epsilon_1\epsilon_2 H^{-1}\left[(HBX^{-1})^\star - (X^{-1})^\star(X^\star HB)^\star X^{-1} \right]$$ where $X=\bmatrix{X_c & X_f}, B=\bmatrix{X_c\Lam_a - AX_c & 0},$  if there exists a  matrix $\Lam_f\in \K^{(n-p)\times (n-p)}$ such that $AX_f=X_f\Lam_f,$ $\sigma(\epsilon_2 \Lambda_c^{\star}) \cap \sigma(\Lambda_f)=\emptyset,$ and $\mathfrak{R}(X_f)=\mathcal{X}_f.$ Thus $$(A+\Delta A)\bmatrix{X_c & X_f}=\bmatrix{X_c & X_f} \bmatrix{\Lam_a & 0 \\ 0 & \Lam_f}.$$

In addition, if $X_f$ and $\Lam_f$ are not known but the existence of these matrices are assumed then the complementary invariant subspaces $\mathcal{X}_c$ and $\mathcal{X}_f$ of $A$ remain invariant for the perturbed matrix $A+\Delta A\in \S$ where the no spillover perturbation of rank less equal to $p$ is given by
 $$\triangle A=X_c(\Lambda_a-\Lambda_c)(X_c^{\star}HX_c)^{-1}X_c^{\star}H.$$

\end{theorem}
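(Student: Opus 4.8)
The plan is to reduce both claims to the structured solvability criterion of Theorem \ref{Thm1:AdhAlam}, combined with the $H$-orthogonality relations for invariant pairs recorded in Proposition \ref{PropA}. In each case I recast the requirement that $A+\Delta A$ have $\mathcal{X}_c$ (and, where relevant, $\mathcal{X}_f$) as an invariant subspace with the prescribed restricted spectra into a single linear matrix equation $\Delta A\,X=B$ for known $X,B$, after which the existence of a solution $\Delta A\in\S$ is controlled entirely by the two compatibility conditions $BX^\dagger X=B$ and $X^\star HB=\epsilon_1\epsilon_2(X^\star HB)^\star$ of Theorem \ref{Thm1:AdhAlam}.

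For the first assertion I take $X=\bmatrix{X_c & X_f}$, which is nonsingular, and note that since $AX_f=X_f\Lam_f$ already holds, the target identity $(A+\Delta A)X=X\diag(\Lam_a,\Lam_f)$ forces $\Delta A X=B$ with $B=\bmatrix{X_c\Lam_a-AX_c & 0}=\bmatrix{X_c(\Lam_a-\Lam_c) & 0}$. Invertibility of $X$ makes $BX^\dagger X=B$ automatic and kills the $Z$-term in equation (\ref{invrA}), so only the condition $X^\star HB=\epsilon_1\epsilon_2(X^\star HB)^\star$ must be verified, which I do block by block. The $(1,1)$ block reduces, after using $X_c^\star HX_c\Lam_c=\epsilon_2\Lam_c^\star X_c^\star HX_c$ from Proposition \ref{PropA}(a), exactly to the stated hypothesis $X_c^\star HX_c\Lam_a=\epsilon_1\epsilon_2(X_c^\star HX_c\Lam_a)^\star$; the off-diagonal and $(2,2)$ blocks vanish once $X_c^\star HX_f=0$ and $X_f^\star HX_c=0$, both of which follow from Proposition \ref{PropA}(b) upon observing that $\sigma(\epsilon_2\Lam_c^\star)\cap\sigma(\Lam_f)=\emptyset$ is equivalent to $\sigma(\epsilon_2\Lam_f^\star)\cap\sigma(\Lam_c)=\emptyset$. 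Substituting $X$ and $B$ into equation (\ref{invrA}) with $Z=0$ then produces the displayed formula.

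The no-spillover statement is more delicate because $X_f$ is unavailable and $X$ cannot be formed; here I instead verify the proposed $\Delta A=X_c(\Lam_a-\Lam_c)(X_c^\star HX_c)^{-1}X_c^\star H$ directly. First I must argue that $X_c^\star HX_c$ is invertible: the spectral-disjointness assumption gives, via Proposition \ref{PropA}(b), both $X_c^\star HX_f=0$ and $X_f^\star HX_c=0$, so $X^\star HX$ is block diagonal, and nonsingularity of $X$ and $H$ forces the block $X_c^\star HX_c$ to be nonsingular. Then $(A+\Delta A)X_c=X_c\Lam_a$ follows by direct substitution, the no-spillover identity $\Delta AX_f=0$ (hence $(A+\Delta A)X_f=X_f\Lam_f$) follows from $X_c^\star HX_f=0$ alone, with no explicit use of $X_f$, and $\rank(\Delta A)\le p$ is immediate from the factorization through $X_c$.

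The remaining and main obstacle is to show $\Delta A\in\S$, i.e. $H\Delta A=\epsilon_1\epsilon_2(H\Delta A)^\star$. Writing $M=X_c^\star HX_c$ and using $H^\star=\epsilon_1H$ together with $M^\star=\epsilon_1M$, I expect this to collapse, after cancelling the full-rank factors $HX_c$ on the left and $X_c^\star H$ on the right, to $M(\Lam_a-\Lam_c)=\epsilon_2(\Lam_a-\Lam_c)^\star M$. This splits into the $\Lam_c$-relation $M\Lam_c=\epsilon_2\Lam_c^\star M$ furnished by Proposition \ref{PropA}(a) and the $\Lam_a$-relation $M\Lam_a=\epsilon_2\Lam_a^\star M$, which is the imposed hypothesis on $\Lam_a$ rewritten via $M^\star=\epsilon_1M$; hence both pieces hold and $\Delta A\in\S$. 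The conceptual heart of the argument is that $H$-orthogonality of complementary invariant subspaces under spectral disjointness simultaneously renders $X_c^\star HX_c$ invertible and annihilates $\Delta A$ on $\mathcal{X}_f$, which is precisely what lets the perturbation be written down, and shown to be no-spillover, without any knowledge of $X_f$.
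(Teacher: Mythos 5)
Your proposal is correct and follows essentially the same route as the paper: the first claim is reduced to the solvability conditions of Theorem \ref{Thm1:AdhAlam} for $\Delta A\,[X_c\;X_f]=[X_c\Lam_a-AX_c\;\;0]$ with the block symmetry check resting on Proposition \ref{PropA}, and the no-spillover formula is justified via the invertibility of $X_c^{\star}HX_c$ and the identity $M(\Lam_a-\Lam_c)=\epsilon_2(\Lam_a-\Lam_c)^{\star}M$, exactly as in the paper. The only cosmetic difference is that you verify the closed-form no-spillover perturbation directly while the paper derives it from the ansatz $\Delta A=\hat{A}X_c^{\star}H$.
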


\begin{proof}
Let $\Lam_f \in \K^{(n-p)\times (n-p)}$ be a matrix such that $AX_f=X_f\Lam_f$  and $\sigma(\epsilon_2 \Lambda_c^{\star}) \cap \sigma(\Lambda_f)=\emptyset.$ Then by Proposition \ref{PropA} (b) it follows that $X_c^\star HX_f=0.$ Then we show that there exists a matrix $\Delta A\in \S$ such that $(A+\Delta A)X_f=X_f\Lam_f$ and $(A+\Delta A)X_c=X_c\Lam_a,$ that is, $$\Delta A \bmatrix{X_c & X_f} = \bmatrix{X_c\Lam_a - AX_c & 0}.$$ From Theorem \ref{Thm1:AdhAlam}, such a $\Delta A$ exists since \beano  \bmatrix{X_c & X_f}^\star H  \bmatrix{X_c\Lam_a - AX_c & 0}  &=& \bmatrix{X_c^\star HX_c\Lam_a - X_c^\star HAX_c & 0 \\ X_f^\star HX_c \Lam_a - X_f^\star HAX_c &0} \\ &=& \bmatrix{X_c^\star HX_c\Lam_a- X_c^\star HX_c\Lam_c & 0 \\ 0 & 0} \\ &=& \epsilon_1\epsilon_2 \left(\bmatrix{X_c^\star HX_c\Lam_a- X_c^\star HX_c\Lam_c & 0 \\ 0 & 0}\right)^\star \\ &=& \epsilon_1\epsilon_2\left( \bmatrix{X_c & X_f}^\star H  \bmatrix{X_c\Lam_a - AX_c & 0}\right)^\star,\eeano where the second last step follows from the assumption $X_c^\star HX_c\Lam_a=\epsilon_1\epsilon_2(X_c^\star HX_c\Lam_a)^\star,$ and by Proposition \ref{PropA} (a), which gives $X_c^\star HX_c\Lam_c=\epsilon_1\epsilon_2(X_c^\star HX_c\Lam_c)^\star.$ 

However, if the matrices $X_f$ and $\Lam_f$ are not known then assuming the existence of such matrices, a required structured no spillover perturbation is determined as follows. Let $\Delta A=\hat{A}X_c^\star H$ for some $\hat{A}\in\K^{n\times p}.$ Then it obviously follows that $\Delta AX_f=0=AX_f-X_f\Lam_f.$ The matrix $\hat{A}$ can be found by solving the matrix equation \begin{equation} \label{ultimate_eqn}
 \hat{A} X_c^{\star}HX_c=X_c\Lam_a-AX_c=X_c(\Lambda_a-\Lambda_c) 
\end{equation} such that $\Delta A=\hat{A}X_c^\star H\in \S.$ Further $\left[X_c\,\,X_f \right],\,H$ are invertible and  $X_c^\star HX_f=0,$ since $\sigma(\epsilon_2 \Lam_c^\star) \cap \sigma(\Lam_f)=\emptyset.$ Hence $\left[X_c\,\,X_f\right]^\star H \left[X_c\,\,X_f\right]=\mbox{diag}\left(X_c^\star HX_c,\,\,X_f^\star HX_f \right)$ is invertible, thus $X_c^\star HX_c$ is nonsingular. Therefore solving equation (\ref{ultimate_eqn}) we obtain $$\Delta A= X_c(\Lambda_a-\Lambda_c)(X_c^{\star}HX_c)^{-1}X_c^\star H.$$ Thus it still needs to prove that $\Delta A^{[\star]}=\epsilon_2 \Delta A.$

Note that, as per the assumption, $(X_c^{\star}HX_c) \Lambda_a=\epsilon_1\epsilon_2 (X_c^{\star}HX_c \Lambda_a)^\star=\epsilon_2 \Lam_a^\star X_c^\star HX_c.$ Further by Proposition \ref{PropA} (a), $(X_c^{\star}HX_c) \Lambda_c= \epsilon_2 \Lam_c^\star X_c^\star HX_c.$ Subtracting these two equations, we obtain \begin{equation}\label{eqn:lalc}(X_c^\star HX_c)^{-1}(\Lam_a^\star - \Lam_c^\star) = \epsilon_2 (\Lam_a-\Lam_c)(X_c^\star HX_c)^{-1}.\end{equation} Then 
\beano (\Delta A)^{[\star]} &=& H^{-1}(\Delta A)^\star H \\ &=& \epsilon_1 H^{-1} \left[H^\star X_c(X_cHX_c)^{-1} (\Lam_a-\Lam_c)^\star X_c^\star \right] H \\ &=& \epsilon_1 H^{-1} \left[\epsilon_2 H^\star X_c(\Lam_a -\Lam_c)(X_c^\star HX_c)^{-1}X_c^\star \right] H, \, \mbox{by equation} \, (\ref{eqn:lalc}) \\ &=& \epsilon_2 H^{-1}HX_c(\Lam_a - \Lam_c)(X_c^\star HX_c)^{-1}X_c^\star H \\ &=& \epsilon_2 \Delta A.\eeano This completes the proof. \hfill{$\square$}
\end{proof}

We emphasize that the spectral condition  $\sigma(\epsilon_2 \Lambda_c^{\star}) \cap \sigma(\Lambda_f)=\emptyset$ is not a strong condition to be satisfied by $\Lam_f$ and $\Lam_c.$ Note that $\sigma(A)=\sigma(\Lambda_c) \cup \sigma(\Lambda_f).$ Further, from Proposition \ref{eigvec_ortho} (a), it follows that eigenvalues of a structured matrix $A\in\S$ occur in pairs $(\lam, \epsilon_2 \lam^\star)$. Thus the spectral condition on $\Lam_f$ and $\Lam_c$ eventually means that each of these matrices should preserve the eigenvalue pairing of $A.$ An important feature of the perturbations $\Delta A$ derived in Theorem \ref{updateA_nospillover} is that $$\sig(A+\Delta A| \mathcal{X}_c)= \Lam_a \neq \sig(A| \mathcal{X}_c) \,\, \mbox{and} \,\, \sig(A+\Delta A| \mathcal{X}_f)= \sig(A| \mathcal{X}_f).$$

\section{Modifying eigenvalues and preserving Jordan chains under structured perturbations}

In this section we consider the problem {\bf (P1)}, that is, we determine structured perturbations of a structured matrix such that the corresponding perturbed matrices reproduce a desired set of eigenvalues and preserve Jordan chains of the unperturbed matrix. We emphasize that it is desirable to find a real structured perturbation $\Delta A$ when the given structured matrix $A$ is real. 

Recall that we consider the space of structured matrices $\S\in\{\J, \L\}$ where $\J$ and $\L$ are the Jordan and Lie algebra defined by a scalar product $\langle\cdot,\cdot\rangle_H$ on $\K^n.$ Here $H$ is a unitary matrix when $\K=\C$, and $H$ is an orthogonal matrix when $\K=\R,$ and $H^\star=\epsilon_1 H,$ $\epsilon_1\in\{1,-1\}$ where $\star=*$ (conjugate transpose) when $\K=\C$ and $\star=T$ (transpose) when $\K=\R.$ Besides, if $A\in\S$ then $A^{[\star]}=\epsilon_2 A$ where $\epsilon_2\in\{1,-1\}.$  Moreover $\epsilon_2=1$ if $\S=\J,$ and $\epsilon_2=-1$ if $\S=\L.$ 



Then we have following proposition about the Jordan pairs of structured matrices. 
\begin{proposition} \label{eigvec_ortho}
Let $A\in \S.$ Then the following are true.
\begin{itemize}
\item[(a)] $\lam$ is an eigenvalue of $A$ if and only if $\epsilon_2 \lambda^{\star}$ is an eigenvalue of $A.$ Indeed the partial multiplicities corresponding to $\epsilon_2 \lambda^{\star}$ are equal to those corresponding to $\lam$. 
\item[(b)] Let $(\lam_1,X_1)$ and $(\lam_2,X_2)$ be Jordan pairs of $A.$ Then $X_1^\star HX_2=0$ if $\lam_2 \neq \epsilon_2 \lam_1^{\star}.$ Thus if $X=\bmatrix{X_1 & X_2}$ then $X^\star HX=\mbox{diag}\left(X_1^\star HX_1,\,X_2^\star HX_2\right).$ In particular, if $(\lam, X)$ is a Jordan pair of $A\in\S$ and $\lam\neq \ep_2\lam^\star$ then $X^\star HX=0.$
\item[(c)] Let $(\lam,X)$ be a Jordan pair of $A$ such that $\lam \neq \ep_2 \lam^\star$ then there must exist another Jordan pair $(\ep_2 \lam^\star ,\tilde{X})$ of $A$ such that $\#(\tilde{X})=\#(X)$. Then taking $X_0=\bmatrix{X& \tilde{X}}$ we have $X_0^\star HX_0=\bmatrix{0& X^\star H\tilde{X} \\ \ep_1 (X^\star H\tilde{X})^\star& 0}$.  
\item[(d)] If $\lam = \ep_2\lam^\star$ is an eigenvalue of $A$ corresponding to right eigenvector $x$ then $Hx$ is the corresponding left eigenvector. However, in this case $\langle x,x\rangle_H$ need not be zero.
\end{itemize}
\end{proposition}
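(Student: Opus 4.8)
The plan is to base everything on the single structural identity $A^\star H=\epsilon_2 HA$, equivalently $A=\epsilon_2 H^{-1}A^\star H$, which merely rewrites $A^{[\star]}=H^{-1}A^\star H=\epsilon_2 A$ together with $\epsilon_2^2=1$. Parts (b) and (c) can then be read off from Proposition \ref{PropA} once (a) is in hand, and (d) is a one-line consequence of the same identity. Throughout I would use that a Jordan pair $(\lam,X)$ satisfies $AX=XJ(\lam)$, so that $(X,J(\lam))$ is an invariant pair in the sense of Proposition \ref{PropA}.

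For (a), I would observe that $A=\epsilon_2 H^{-1}A^\star H$ displays $A$ as similar (via $H$) to $\epsilon_2 A^\star$, so $A$ and $\epsilon_2 A^\star$ carry identical Jordan data. The eigenvalue pairing then follows by tracking the partial multiplicities through three elementary operations: passing to $A^\star$ sends the partial multiplicities at $\mu$ to those at $\mu^\star$ (transposition fixes eigenvalues when $\star=T$, conjugation sends $\mu\mapsto\bar\mu$ when $\star=*$); multiplying by the real scalar $\epsilon_2$ relabels an eigenvalue $\mu$ as $\epsilon_2\mu$ without altering block sizes; and the similarity leaves everything invariant. Composing these shows that the partial multiplicities of $A$ at $\lam$ coincide with those of $A$ at $\epsilon_2\lam^\star$, which is exactly (a). I expect this bookkeeping of partial multiplicities, rather than the mere coincidence of spectra, to be the delicate point, since one must check that $\star$ applied to a scalar interacts correctly with the $\epsilon_2$-scaling, i.e. $(\epsilon_2\nu)^\star=\epsilon_2\nu^\star$, and that $\star$ is an involution so the correspondence is symmetric.

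For (b), I would regard $(\lam_1,X_1)$ and $(\lam_2,X_2)$ as invariant pairs $(X_1,J(\lam_1))$ and $(X_2,J(\lam_2))$ and invoke Proposition \ref{PropA}(b). Since each $J(\lam_i)$ is a single Jordan block, $\sigma(\epsilon_2 J(\lam_1)^\star)=\{\epsilon_2\lam_1^\star\}$ and $\sigma(J(\lam_2))=\{\lam_2\}$, so these spectra are disjoint precisely when $\lam_2\neq\epsilon_2\lam_1^\star$, giving $X_1^\star HX_2=0$. The key small remark is that, because $\star$ is an involution and $\epsilon_2$ is real, $\lam_2\neq\epsilon_2\lam_1^\star$ is equivalent to $\lam_1\neq\epsilon_2\lam_2^\star$; hence the symmetric block $X_2^\star HX_1$ vanishes under the same hypothesis, yielding the block-diagonal form of $X^\star HX$. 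The ``in particular'' claim is the special case $X_1=X_2=X$, $\lam_1=\lam_2=\lam$ with $\lam\neq\epsilon_2\lam^\star$.

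For (c), the existence of a Jordan pair $(\epsilon_2\lam^\star,\tilde{X})$ with $\#(\tilde{X})=\#(X)$ is immediate from the equality of partial multiplicities in (a). To obtain the stated form of $X_0^\star HX_0$ for $X_0=\bmatrix{X & \tilde{X}}$, I would apply (b) twice: both $\lam\neq\epsilon_2\lam^\star$ and $\epsilon_2\lam^\star\neq\epsilon_2(\epsilon_2\lam^\star)^\star=\lam$ force the diagonal blocks $X^\star HX$ and $\tilde{X}^\star H\tilde{X}$ to vanish, while $H^\star=\epsilon_1 H$ gives $\tilde{X}^\star HX=\epsilon_1(X^\star H\tilde{X})^\star$, producing the off-diagonal symmetry. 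Finally, for (d), applying $A^\star H=\epsilon_2 HA$ to a right eigenvector $x$ of $\lam$ gives $A^\star(Hx)=\epsilon_2\lam\,Hx=\lam^\star Hx$ (using $\lam=\epsilon_2\lam^\star$, i.e. $\epsilon_2\lam=\lam^\star$), which says exactly that $Hx$ is a left eigenvector for $\lam$; the nonvanishing of $\langle x,x\rangle_H=x^\star Hx$ in general is seen from, e.g., $H=I$, $\epsilon_2=1$ (real symmetric or Hermitian $A$), where $x^\star Hx=\|x\|_2^2\neq 0$.
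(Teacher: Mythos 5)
Your proposal is correct and follows essentially the same route as the paper: part (a) via the similarity of $A$ to $\epsilon_2 A^\star$, parts (b) and (c) by invoking Proposition~\ref{PropA}(b) for the Jordan-block invariant pairs together with $H^\star=\epsilon_1 H$ for the off-diagonal symmetry, and part (d) by applying $A^\star H=\epsilon_2 HA$ to the eigenvector. You are merely more explicit than the paper about the partial-multiplicity bookkeeping in (a) and the equivalence $\lam_2\neq\epsilon_2\lam_1^\star \Leftrightarrow \lam_1\neq\epsilon_2\lam_2^\star$ needed for both off-diagonal blocks in (b); these are correct elaborations, not a different argument.
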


\begin{proof}
The proof of $(a)$ follows from the fact that  $A$ is unitarily similar to $\epsilon_2 A^{\star}.$ As $(\lam_1,X_1)$ and $(\lam_2,X_2)$ are Jordan pairs of $A$ so that $AX_1=X_1J(\lam_1)$ and $AX_2=X_2J(\lam_2)$ where $J(\lam_i)$ denotes the Jordan block corresponding to $\lam_i$ of size $\#(X_i),\,i=1,2$. By Proposition \ref{PropA}$(b)$ it follows that $X_1^\star HX_2=0$ whenever $\lam_2 \neq \epsilon_2 \lam_1^{\star}$ thus $(b)$ follows. Next, as $(\lam,X)$ is a Jordan pair of $A$ so $AX=XJ(\lam)$ where $J(\lam)$ is a Jordan block corresponding to $\lam$ of size $\#(X)$. Since $\lam \neq \ep_2 \lam^\star$ so by Proposition \ref{eigvec_ortho} $(a)$ it follows that $\ep_2\lam^\star$ must be an eigenvalue of $A$ whose partial multiplicity is same as the partial multiplicity of $\lam$ hence there must exist a matrix $\tilde{X}$ with $\#(\tilde{X})=\#(X)$ satisfying $A\tilde{X}=\tilde{X} J(\ep_2 \lam^\star)$. Thus $(\ep_2 \lam^\star,\tilde{X})$ is a Jordan pair of $A$. Further by Proposition \ref{PropA} $(b)$ it follows that $X^\star HX=\tilde{X}^\star H\tilde{X}=0$ as $\lam \neq \ep_2 \lam^\star$. Hence taking $X_0=\bmatrix{X& \tilde{X}}$ we have $X_0^\star HX_0=\bmatrix{0& X^\star H \tilde{X} \\ \ep_1(X^\star H \tilde{X})^\star& 0}$ where the last equality follows by using $\tilde{X}^\star HX=\ep_1(X^\star H \tilde{X})^\star.$ Proof of $(d)$ is as follows. If $Ax=\lam x$ then 
$$ HAx=\lam Hx \Rightarrow \ep_2 A^\star Hx=\lam Hx \Rightarrow (Hx)^\star A=\epsilon_2 \lam^\star (Hx)^\star.$$ This completes the proof.  $\hfill{\square}$
\end{proof}
Then we have the following results on eigenvectors of $A\in \S$.
\begin{corollary}
Let $(\lam_1,x_1)$ and $(\lam_2,x_2)$ be eigenpairs of $A\in \S.$ Then $\langle x_1,x_2\rangle_H=0$ if $\lam_2 \neq \epsilon_2 \lam_1^{\star}.$ Thus if $X=\bmatrix{x_1 & x_2}$ then $X^\star HX=\mbox{diag}\left(x_1^\star Hx_1,\,x_2^\star Hx_2\right).$ In particular, if $(\lam, x)$ is an eigenpair of $A\in\S$ and $\lam\neq \ep_2\lam^\star$ then $\langle x,x\rangle_H=0.$
\end{corollary}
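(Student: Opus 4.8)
The plan is to treat this corollary as the eigenvector special case of Proposition~\ref{eigvec_ortho}(b), since an eigenpair $(\lam, x)$ of $A$ is precisely a Jordan pair $(\lam, X)$ with $X = x$ a single column (the Jordan block $J(\lam)$ being the $1\times 1$ matrix $[\lam]$). First I would invoke Proposition~\ref{eigvec_ortho}(b) directly with $X_1 = x_1$ and $X_2 = x_2$: the hypothesis $\lam_2 \neq \epsilon_2\lam_1^\star$ gives $X_1^\star H X_2 = x_1^\star H x_2 = 0$, and by the definition of the scalar product $\langle x_1, x_2\rangle_H = x_2^\star H x_1$, this yields the claimed orthogonality (after the trivial observation that $x_1^\star H x_2 = 0$ and $x_2^\star H x_1 = 0$ are equivalent, which one can see either by symmetry of the pairing argument applied with the roles of the indices swapped, or from the relation $H^\star = \epsilon_1 H$).

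Next I would read off the block-diagonal structure of $X^\star H X$ for $X = \bmatrix{x_1 & x_2}$. Writing out the $2\times 2$ block matrix, the off-diagonal entries are $x_1^\star H x_2$ and $x_2^\star H x_1$, both of which vanish by the orthogonality just established, leaving $X^\star H X = \diag(x_1^\star H x_1,\, x_2^\star H x_2)$. This is an immediate specialization of the corresponding assertion in Proposition~\ref{eigvec_ortho}(b) to single-column $X_i$.

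Finally, for the self-paired case $\lam = \epsilon_2\lam^\star$ I would again specialize Proposition~\ref{eigvec_ortho}(b): its last sentence states that if $(\lam, X)$ is a Jordan pair with $\lam \neq \epsilon_2\lam^\star$ then $X^\star H X = 0$. Taking $X = x$ a single eigenvector gives $\langle x, x\rangle_H = x^\star H x = 0$ whenever $\lam \neq \epsilon_2\lam^\star$, which is exactly the stated conclusion.

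Since every claim follows by specializing the already-proved Proposition~\ref{eigvec_ortho} to Jordan pairs of chain length one, there is essentially no obstacle here; the corollary is a direct corollary in the literal sense. The only point requiring a word of care is the direction of the conjugate-transpose in the scalar product $\langle x, y\rangle_H = y^\star H x$ versus the quantity $X_1^\star H X_2$ appearing in the proposition, so I would make sure the indices are matched correctly and note that vanishing of one such bilinear/sesquilinear expression forces vanishing of its adjoint via $H^\star = \epsilon_1 H$.
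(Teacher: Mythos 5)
Your proof is correct and matches the paper's intent exactly: the paper states this corollary without proof as an immediate specialization of Proposition \ref{eigvec_ortho}(b) to Jordan chains of length one, which is precisely your argument, including the correct handling of the adjoint direction via $H^\star=\epsilon_1 H$ (or equivalently the symmetry of the condition $\lam_2\neq\epsilon_2\lam_1^\star$). The only nitpick is that you introduce the final claim as "the self-paired case $\lam=\epsilon_2\lam^\star$" when it is in fact the non-self-paired case $\lam\neq\epsilon_2\lam^\star$; the argument you then give is the right one.
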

\begin{corollary} \label{blockform}
Let $(\lam,x)$ be an eigenpair of $A \in \S$ such that $\lam \neq \epsilon_2 \lam^{\star}$, then by part $(a)$ of Proposition $\ref{eigvec_ortho},$ there exists another eigenvalue $\epsilon_2 \lam^{\star}$ of $A$ with corresponding eigenvector $\tilde{x}$. Taking $X_0=[x \,\, \tilde{x}]$ we obtain $$X_0^{\star}HX_0=\bmatrix{0& h \\ \epsilon_1 h^{\star}& 0}$$ since  by $(b)$ of Proposition \ref{PropA} we have $\langle x, x\rangle_H=\langle \tilde{x}, \tilde{x} \rangle_H=0$ and $\langle x, \tilde{x}\rangle_H=\epsilon_1 (\langle \tilde{x}, x \rangle_H)^{\star}=\epsilon_1 h^{\star}$ where $h=\langle \tilde{x}, x \rangle_H.$ In particular $X^{\star}_0HX_0$ is nonsingular when $h=\langle \tilde{x}, x \rangle_H \neq 0.$
\end{corollary}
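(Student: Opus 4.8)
The plan is to reduce the whole statement to a direct $2\times 2$ computation of the Gram matrix $X_0^\star H X_0$, drawing only on the eigenvalue pairing of Proposition \ref{eigvec_ortho}(a), the self-orthogonality of eigenvectors attached to non-self-paired eigenvalues, and the symmetry $H^\star=\epsilon_1 H$. First I would invoke Proposition \ref{eigvec_ortho}(a): because $\lam\neq\epsilon_2\lam^\star$, the scalar $\epsilon_2\lam^\star$ is a genuine eigenvalue of $A$ distinct from $\lam$, so a corresponding eigenvector $\tilde{x}$ exists; set $X_0=\bmatrix{x & \tilde{x}}$. Expanding entrywise and recalling that $\langle u,v\rangle_H=v^\star H u$, the four entries of $X_0^\star H X_0$ are exactly $\langle x,x\rangle_H$, $\langle \tilde{x},x\rangle_H$, $\langle x,\tilde{x}\rangle_H$, and $\langle \tilde{x},\tilde{x}\rangle_H$.

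Next I would annihilate the two diagonal entries. For $x$ this is the ``in particular'' clause of Proposition \ref{eigvec_ortho}(b) applied to the Jordan pair of size one, since $\lam\neq\epsilon_2\lam^\star$ gives $\langle x,x\rangle_H=0$. For $\tilde{x}$, whose eigenvalue is $\mu:=\epsilon_2\lam^\star$, I must check that the same hypothesis $\mu\neq\epsilon_2\mu^\star$ is satisfied; the only point worth spelling out is that $\epsilon_2\mu^\star=\epsilon_2(\epsilon_2\lam^\star)^\star=\epsilon_2^2\lam=\lam$, using that $\epsilon_2\in\{1,-1\}$ is real, so $\mu\neq\epsilon_2\mu^\star$ is again just $\lam\neq\epsilon_2\lam^\star$. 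Hence $\langle \tilde{x},\tilde{x}\rangle_H=0$ as well, and both diagonal entries drop out.

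For the off-diagonal entries I would use $H^\star=\epsilon_1 H$ together with $\epsilon_1^2=1$: setting $h:=\langle \tilde{x},x\rangle_H=x^\star H\tilde{x}$, one computes $h^\star=\tilde{x}^\star H^\star x=\epsilon_1\tilde{x}^\star H x=\epsilon_1\langle x,\tilde{x}\rangle_H$, whence $\langle x,\tilde{x}\rangle_H=\epsilon_1 h^\star$. This yields exactly $X_0^\star H X_0=\bmatrix{0 & h \\ \epsilon_1 h^\star & 0}$. Finally, the determinant of this matrix is $-\epsilon_1 h h^\star$, which is nonzero precisely when $h\neq 0$, giving the asserted nonsingularity.

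I do not anticipate any real obstacle: the corollary is just the partial-multiplicity-one specialization of Proposition \ref{eigvec_ortho}, and everything is a short computation. The single step that demands a moment's care is verifying that the self-orthogonality hypothesis transfers from $x$ to its paired vector $\tilde{x}$, that is, that $\epsilon_2\lam^\star$ is itself not self-paired; this is the one place where the reality of $\epsilon_2$ and the identity $\epsilon_2^2=1$ are genuinely used.
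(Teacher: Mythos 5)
Your proposal is correct and follows essentially the same route as the paper: the diagonal entries vanish by the orthogonality statement of Proposition \ref{PropA}(b) (equivalently the ``in particular'' clause of Proposition \ref{eigvec_ortho}(b)), the off-diagonal relation comes from $H^\star=\epsilon_1 H$, and nonsingularity follows from the determinant $-\epsilon_1 h h^\star$. Your explicit check that $\mu=\epsilon_2\lam^\star$ satisfies $\epsilon_2\mu^\star=\lam\neq\mu$, so that $\tilde{x}$ is also self-orthogonal, is a detail the paper leaves implicit but is exactly the right thing to verify.
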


Now we have the following theorem which describes the choice of structured perturbations for a given structured matrix such that the perturbed matrices reproduce a set of desired eigenvalues while keeping the Jordan chains of the unperturbed matrix invariant. We know that the number of Jordan chains corresponding to an eigenvalue is the geometric multiplicity of the eigenvalue. Let $\lam^c_i$ be an eigenvalue of $A$ having $m_i$ number of Jordan chains then it implies that there exist $m_i$ number of Jordan pairs of $A$ corresponding to $\lam^c_i$ and we denote them as $(\lam^c_i,X^c_{i,l}),\, l=1,\hdots, m_i$. Suppose $\alpha(\lam^c_i)$ denotes the algebraic multiplicity of an eigenvalue $\lam^c_i$ then it follows that $\alpha(\lam^c_i)=\sum_{l=1}^{m_i} \#(X^c_{i,l}).$ We recall that $(\lam^c_i,X^c_{i,l})$ said to be a Jordan pair of $A$ if it satisfies $AX^c_{i,l}=X^c_{i,l}J_{l}(\lam^c_i)$ where $J_{l}(\lam^c_i)$ denotes the Jordan block corresponding to $\lam^c_i$ of size $\#(X^c_{i,l}).$ 


\begin{theorem} \label{P1soln_comp}
Let $A\in\S\subset\C^{n\times n}.$ Suppose $\{(\lam^c_i,X^c_{i,l})\}_{l=1}^{m_i}$, $\{(\ep_2 \overline{\lam^c_i},\tilde{X}^c_{i,l})\}_{l=1}^{m_i}$, $\{(\lam^c_j= \ep_2 \overline{\lam^c_j},X^c_{j,l})\}_{l=1}^{m_j}$ are Jordan pairs of $A$, where $\lam^c_i \neq \ep_2 \overline{\lam^c_i},$ $i=1, \hdots, q,$ $j=2q+1, \hdots, p$. Let $\lam^a_i, \lam^a_j$ be a collection of scalars with $\lam^a_i\neq \ep_2 \overline{\lam^a_i}$ and $\lam^a_j = \ep_2 \overline{\lam^a_j},$  $i=1,\hdots, q,$ $j=2q+1, \hdots, p.$ Then any structured perturbation $\Delta A\in\S$ of $A$ for which  $\{(\lam^a_i,X^c_{i,l})\}_{l=1}^{m_i},\, \{(\ep_2 \overline{\lam^a_i}, \tilde{X}^c_{i,l})\}_{l=1}^{m_i},$ $\{(\lam^a_j, X^c_{j,l})\}_{l=1}^{m_j}$ are Jordan pairs of $A+\Delta A$ is given by 
 \beano \Delta A &=& X_c(\Lam_a-\Lam_c)X_c^{\dagger}+\ep_2 H^{-1}(X_c^\dagger)^*(\Lam_a-\Lam_c)^*X_c^*H-H^{-1}(X_c^{\dagger})^*X_c^*HX_c(\Lam_a-\Lam_c)X_c^{\dagger} \\ && +H^{-1}(I-X_cX_c^{\dagger})Z(I-X_cX_c^{\dagger}),\eeano where $X_c=\bmatrix{X_1^c & \hdots & X_q^c & X_{2q+1}^c & \hdots & X_p^c},\,$ $X_i^c=\bmatrix{\hat{X}_i^c & \tilde{X}_i^c},\,\hat{X}_i^c=\bmatrix{X^c_{i,1}\hdots X^c_{i,m_i}},\,\tilde{X}^c_i=\bmatrix{\tilde{X}^c_{i,1}\hdots \tilde{X}^c_{i,m_i}},\,X^c_j=\bmatrix{X^c_{j,1}\hdots X^c_{j,m_j}},\, \Lam_c=\diag(\Lam_1^c, \hdots, \Lam_q^c,\, \Lam^c_{2q+1}, \hdots, \Lam_p^c),\Lam^c_i=\diag(\hat{\Lam}^c_i,\tilde{\Lam}^c_i),\linebreak \hat{\Lam}^c_i=\diag(J_{1}(\lam^c_i),\dots,J_{m_i}(\lam^c_i)),\,\tilde{\Lam}^c_i=\diag(J_{1}(\ep_2 \overline{\lam^c_i}),\dots,J_{m_i}(\ep_2 \overline{\lam^c_i})),\,\Lam^c_j=\diag(J_{1}(\lam^c_j),\linebreak\hdots,J_{m_j}(\lam^c_j)),$ $\Lam_a=\diag(\Lam_1^a, \hdots, \Lam_q^a,\, \Lam^a_{2q+1}, \hdots, \Lam_p^a),\,\Lam^a_i=\diag(\hat{\Lam}^a_i,\tilde{\Lam}^a_i),\,\hat{\Lam}^a_i=\diag(J_{1}(\lam^a_i),\linebreak \dots,J_{m_i}(\lam^a_i)),\,\tilde{\Lam}^a_i=\diag(J_{1}(\ep_2 \overline{\lam^a_i}),\dots,J_{m_i}(\ep_2 \overline{\lam^a_i})),\,\Lam^a_j=\diag(J_{1}(\lam^a_j),\hdots,J_{m_j}(\lam^a_j))$ with \linebreak $\#J_l(\lam_i^a)=\#J_l(\lam_i^c),\, l=1,\hdots, m_i,$ $\#J_l(\lam_j^a)=\#J_l(\lam_j^c),\, l=1,\hdots, m_j$ and $Z^*=\ep_1\ep_2 Z\in\C^{n\times n}$ is arbitrary.

Let $\lam^f_k,\, k=r+1, \hdots, n$ be the rest of the eigenvalues of $A$ with $r=\sum_{i=1}^{q} 2 \alpha (\lam^c_i)+\sum_{j=2q+1}^{p} \alpha (\lam^c_j)$. If $$\left\{\lam^c_i,\,\ep_2 \overline{\lam^c_i}, \lam^c_j : i=1,\dots,q, j=2q+1, \hdots ,p \right\}\cap \{\lam^f_k\,:\,k=r+1,\dots,n\}=\emptyset$$ 
then a no spillover structured perturbation of rank $r$ is given by
\begin{equation}\label{eqn:nospill} \Delta A=X_c(\Lam_a-\Lam_c)(X_c^*HX_c)^{-1}X_c^*H.\end{equation} 
 

\end{theorem}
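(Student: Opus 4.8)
The plan is to reduce the whole statement to the structured linear equation solved in Theorem \ref{Thm1:AdhAlam}. Since the listed triples are Jordan pairs of $A$, the aggregated matrices obey $AX_c=X_c\Lam_c$, and requiring the \emph{same} chains to be Jordan chains of $A+\Delta A$ for the aimed eigenvalues is exactly $(A+\Delta A)X_c=X_c\Lam_a$, i.e.
\[
\Delta A\,X_c=X_c(\Lam_a-\Lam_c).
\]
I would therefore apply Theorem \ref{Thm1:AdhAlam} with $X=X_c$ and $B=X_c(\Lam_a-\Lam_c)$ (here $\star=\ast$ as $\K=\C$). The range condition $BX_c^\dagger X_c=B$ is immediate because $X_c$ has full column rank, so everything hinges on the Hermitian-type condition $X_c^\star HB=\ep_1\ep_2(X_c^\star HB)^\star$. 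Writing $G:=X_c^\star HX_c$ and using $H^\star=\ep_1 H$, this condition becomes $G(\Lam_a-\Lam_c)=\ep_2(\Lam_a-\Lam_c)^\star G$; and since Proposition \ref{PropA}(a) already gives $G\Lam_c=\ep_2\Lam_c^\star G$, the $\Lam_c$ terms cancel and the requirement collapses to
\[
G\,\Lam_a=\ep_2\,\Lam_a^\star\,G .
\]

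The heart of the proof is this last identity, and I expect it to be the main obstacle. I would exploit the block structure of $G$ furnished by Proposition \ref{eigvec_ortho}(b)--(c): blocks joining eigenvalue groups that are not $H$-paired vanish, so $G$, $\Lam_a$ and $\Lam_c$ share one common block partition and the identity may be checked one eigenvalue group at a time. On each group I would write every Jordan block as $\lam I+N$ with $N$ the nilpotent shift, and invoke the hypotheses $\#J_l(\lam^a)=\#J_l(\lam^c)$ together with the pairings $\lam^a_i\neq\ep_2\overline{\lam^a_i}$, $\lam^a_j=\ep_2\overline{\lam^a_j}$ so that the $c$ and $a$ blocks carry the \emph{same} $N$. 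The scalar parts then cancel on both the conjugate-paired groups (whose $G$-block is anti-diagonal, with off-diagonal block $M_i$) and the self-paired groups, leaving in each case an eigenvalue-free commutation relation $WN=\ep_2 N^\star W$ (with $W=M_i$, resp. $W=G_j$); since this already holds for $\Lam_c$ by Proposition \ref{PropA}(a), it holds verbatim for $\Lam_a$. With both conditions verified, the stated formula for $\Delta A$ is obtained by substituting $B=X_c(\Lam_a-\Lam_c)$ into (\ref{invrA}) and simplifying its two middle terms using $H^\star=\ep_1 H$, the projector identity $(X_cX_c^\dagger)^\star=X_cX_c^\dagger$, and the symmetry $(X_c^\star HB)^\star=\ep_1\ep_2\,X_c^\star HB$ just established.

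For the no spillover claim I would take the ansatz $\Delta A=\hat A\,X_c^\star H$, which kills $\Delta A\,X_f$ provided $X_c^\star HX_f=0$. To get this I would compute $\sigma(\ep_2\Lam_c^\star)=\{\lam^c_i,\ep_2\overline{\lam^c_i},\lam^c_j\}$ and apply Proposition \ref{PropA}(b) against the disjointness hypothesis with $\sigma(\Lam_f)=\{\lam^f_k\}$; the mirror statement $X_f^\star HX_c=0$ follows the same way because $\sigma(\ep_2\Lam_f^\star)$ again sits inside the $f$-spectrum. Hence, with $X=[X_c\ X_f]$ nonsingular and $H$ invertible, $X^\star HX=\diag(G,\,X_f^\star HX_f)$ is invertible, so $G$ is invertible; solving $\hat A\,G=X_c(\Lam_a-\Lam_c)$ yields $\Delta A=X_c(\Lam_a-\Lam_c)G^{-1}X_c^\star H$. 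It remains to check $\Delta A\in\S$: subtracting $G\Lam_c=\ep_2\Lam_c^\star G$ from the already proved $G\Lam_a=\ep_2\Lam_a^\star G$ gives $(\Lam_a-\Lam_c)G^{-1}=\ep_2 G^{-1}(\Lam_a-\Lam_c)^\star$, and feeding this, together with $H^\star=\ep_1 H$ and $(G^{-1})^\star=\ep_1 G^{-1}$, into $H^{-1}(\Delta A)^\star H$ reproduces $\ep_2\Delta A$, exactly as in the closing computation of Theorem \ref{updateA_nospillover}. Finally, since $X_c$ has full column rank $r$ and $X_c^\star H$ full row rank $r$, $\rank(\Delta A)=\rank(\Lam_a-\Lam_c)\le r$, with equality when every targeted eigenvalue is genuinely moved.
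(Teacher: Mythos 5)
Your proposal is correct and follows essentially the same route as the paper: both verify the structured-mapping compatibility condition $X_c^*HX_c(\Lam_a-\Lam_c)=\ep_1\ep_2\bigl(X_c^*HX_c(\Lam_a-\Lam_c)\bigr)^*$ via the block/anti-block structure of $X_c^*HX_c$ from Proposition \ref{PropA}, then invoke the structured mapping theorem (the paper goes through Theorem \ref{updateA} with $R=I$, you apply Theorem \ref{Thm1:AdhAlam} directly, which is the same thing), and obtain the no spillover formula from the argument of Theorem \ref{updateA_nospillover}. Your blockwise reduction to the eigenvalue-free relation $WN=\ep_2N^\star W$ is just an explicit spelling-out of the step the paper dismisses as ``a simple computation,'' and your caveat that $\rank(\Delta A)=r$ requires each targeted eigenvalue to actually move is a fair (minor) refinement of the paper's statement.
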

\begin{proof}
It is given that $AX^c_i=X^c_i\Lam^c_i,\,AX^c_j=X^c_j\Lam^c_j$ holds and as $\lam^c_i,\,\ep_2 \overline{\lam^c_i},\,\lam^c_j$ are all different so from Proposition \ref{PropA} $(b)$ we have $X_c^*HX_c=\diag((X^c_1)^*HX^c_1,\dots,(X^c_q)^*HX^c_q,\,(X^c_{2q+1})^*HX^c_{2q+1},\linebreak \hdots,(X^c_p)^*HX^c_p).$ Further by Proposition \ref{PropA} $(b)$ it implies that $(\hat{X}^c_i)^*H\hat{X}^c_i=(\tilde{X}^c_i)^*H\tilde{X}^c_i=0$ as $\lam^c_i \neq \ep_2 \overline{\lam^c_i},$ thus $(X^c_i)^*HX^c_i=\bmatrix{0& (\hat{X}^c_i)^*H\tilde{X}^c_i \\ \ep_1 ((\hat{X}^c_i)^*H\tilde{X}^c_i)^*& 0}$. Therefore by a simple computation it follows that $X_c^*HX_c(\Lam_a-\Lam_c)=\ep_1\ep_2 (X_c^*HX_c(\Lam_a-\Lam_c))^*$. 
Hence setting $R$ as the identity matrix the condition in equation $(\ref{eqn:cond22})$ is achieved and the desired expression for the matrix \beano \Delta A &=& X_c(\Lam_a-\Lam_c)X_c^{\dagger}+\ep_2 H^{-1}(X_c^\dagger)^*(\Lam_a-\Lam_c)^*X_c^*H-H^{-1}(X_c^{\dagger})^*X_c^*HX_c(\Lam_a-\Lam_c)X_c^{\dagger} \\ && +H^{-1}(I-X_cX_c^{\dagger})Z(I-X_cX_c^{\dagger})\in \S,\eeano follows from Theorem \ref{updateA} with $Z^*=\ep_1 \ep_2 Z \in \C^{n \times n}$ is arbitrary. 

Further, since $\left\{\lam^c_i,\,\ep_2 \overline{\lam^c_i},\,\lam^c_j\,:\,i=1, \hdots, q,\,j=2q+1, \hdots, p\right\}\cap \{\lam^f_k\,:\,k=r+1, \hdots, n\}=\emptyset$ then the existence of structured no spillover perturbation is guaranteed by Theorem \ref{updateA_nospillover} and the structured no spillover perturbation is given by $\Delta A=X_c(\Lam_a-\Lam_c)(X_c^*HX_c)^{-1}X_c^*H \in \S. \hfill{\square}$
\end{proof}

The following theorem presents the structured solution to problem \textbf{(P1)} when all the eigenvalues $\lam_i^c,$ which are to be changed, are simple.

\begin{theorem} \label{P1_comp_distinct}
Let $A\in\S\subset\C^{n\times n}.$ Suppose $(\lam^c_i,x^c_i),\, (\ep_2 \overline{\lam^c_i},\tilde{x}^c_i), (\lam^c_j= \ep_2 \overline{\lam^c_j},x^c_j)$ are eigenpairs of $A$ where $i=1:q,\, j=2q+1:p$, and $\lam^c_i\neq \ep_2 \overline{\lam^c_i}.$ Let $\lam^a_i,\, \lam^a_j$ be a collection of scalars with $\lam^a_i\neq \ep_2 \overline{\lam^a_i}$ and $\lam^a_j = \ep_2 \overline{\lam^a_j}$ where  $i=1:q, \,j=2q+1:p.$ Suppose the eigenvalues $\lam_i^c,\,\epsilon_2 \overline{\lam^c_i},\,\lam_j^c$ are all simple and distinct. Then any structured perturbation $\Delta A\in\S$ of $A$ for which  $(\lam^a_i,x^c_i),\, (\ep_2 \overline{\lam^a_i}, \tilde{x}^c_i), (\lam^a_j, x^c_j)$ are eigenpairs of $A+\Delta A$ is given by 
\beano \Delta A &=& X_c(\Lam_a-\Lam_c)X_c^{\dagger}+\ep_2 H^{-1}(X_c^\dagger)^*(\Lam_a-\Lam_c)^*X_c^*H-H^{-1}(X_c^{\dagger})^*X_c^*HX_c(\Lam_a-\Lam_c)X_c^{\dagger} \\ && +H^{-1}(I-X_cX_c^{\dagger})Z(I-X_cX_c^{\dagger}),\eeano where $X_c=\bmatrix{X_1^c & \hdots & X_q^c & x_{2q+1}^c & \hdots & x_p^c},\,X_i^c=\bmatrix{x_i^c & \tilde{x}_i^c},$ $\Lam_c=\diag(\Lam_1^c, \hdots, \Lam_q^c,\linebreak \lam^c_{2q+1}, \hdots, \lam_p^c),$ $\Lam_i^c =\diag(\lam_i^c, \ep_2\overline{\lam_i^c}),$ $\Lam_a=\diag(\Lam_1^a, \hdots, \Lam_q^a,\, \lam^a_{2q+1}, \hdots, \lam_p^a),$ $\Lam_i^a =\diag(\lam_i^a, \ep_2 \overline{\lam_i^a}),$ and $Z^*=\ep_1\ep_2 Z\in\C^{n\times n}$ is arbitrary.
 
Let $\lam^f_k, k=p+1, \hdots, n$ be the rest of the eigenvalues of $A.$ If $$\left\{\lam^c_i,\,\ep_2 \overline{\lam^c_i}, \lam^c_j : i=1,\dots,q, j=2q+1, \hdots ,p \right\}\cap \{\lam^f_k\,:\,k=p+1,\dots,n\}=\emptyset$$ 
then a no spillover structured perturbation of rank $p$ is given by
\begin{equation}\label{eqn:nospill2}\Delta A=X_c(\Lam_a-\Lam_c)(X_c^*HX_c)^{-1}X_c^*H.\end{equation}
\end{theorem}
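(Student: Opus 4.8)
The plan is to recognize Theorem \ref{P1_comp_distinct} as the specialization of Theorem \ref{P1soln_comp} to the case in which every eigenvalue to be modified is simple. When $\lam_i^c,\ep_2\overline{\lam_i^c},\lam_j^c$ are simple, each Jordan pair collapses to a single eigenvector and every Jordan block $J_l(\cdot)$ becomes a $1\times 1$ scalar block, so that $\Lam_c$ and $\Lam_a$ are diagonal. The whole argument then runs parallel to the proof of Theorem \ref{P1soln_comp}, and the only substantive task is to re-establish, for $R=I$, the structural identity required by Theorem \ref{updateA}, namely $X_c^{*}HX_c(\Lam_a-\Lam_c)=\ep_1\ep_2\left(X_c^{*}HX_c(\Lam_a-\Lam_c)\right)^{*}$, which is precisely condition (\ref{eqn:cond22}).

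First I would invoke Proposition \ref{PropA}(b): since the eigenvalues $\lam_i^c,\ep_2\overline{\lam_i^c},\lam_j^c$ are all distinct, the corresponding $H$-Gram blocks are mutually orthogonal, so $X_c^{*}HX_c$ is block diagonal. For each complex-paired block $X_i^c=\bmatrix{x_i^c & \tilde{x}_i^c}$, Corollary \ref{blockform} supplies the antidiagonal form $(X_i^c)^{*}HX_i^c=\bmatrix{0 & h_i \\ \ep_1\overline{h_i} & 0}$, because $\langle x_i^c,x_i^c\rangle_H=\langle \tilde{x}_i^c,\tilde{x}_i^c\rangle_H=0$, while each self-paired eigenvalue $\lam_j^c=\ep_2\overline{\lam_j^c}$ contributes a scalar $(x_j^c)^{*}Hx_j^c$ that need not vanish.

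Next I would verify the symmetry identity block by block; this is the crux. On a complex-paired block one uses $\Lam_i^a-\Lam_i^c=\diag\left(\lam_i^a-\lam_i^c,\,\ep_2(\overline{\lam_i^a}-\overline{\lam_i^c})\right)$ together with the antidiagonal Gram block, and a short calculation matches $(X_i^c)^{*}HX_i^c(\Lam_i^a-\Lam_i^c)$ with $\ep_1\ep_2$ times its conjugate transpose. On a self-paired block the identity reduces to the scalar relation $\lam_j^a-\lam_j^c=\ep_2(\overline{\lam_j^a}-\overline{\lam_j^c})$, which holds exactly because the pairing hypotheses $\lam_j^a=\ep_2\overline{\lam_j^a}$ and $\lam_j^c=\ep_2\overline{\lam_j^c}$ force $\ep_2\overline{\lam_j^a}=\lam_j^a$ and $\ep_2\overline{\lam_j^c}=\lam_j^c$. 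This is the only place the closure of the aimed spectrum under the pairing $\lam\mapsto\ep_2\overline{\lam}$ is used; everything downstream is formal. With the identity established, Theorem \ref{updateA} with $R=I$ yields $\Delta A\in\S$ in the stated form.

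For the no-spillover conclusion I would let $X_f,\Lam_f$ denote a complementary invariant pair carrying the remaining eigenvalues $\lam_k^f$, whose existence is assumed. The hypothesis $\{\lam_i^c,\ep_2\overline{\lam_i^c},\lam_j^c\}\cap\{\lam_k^f\}=\emptyset$ is exactly the spectral disjointness $\sig(\ep_2\Lam_c^{*})\cap\sig(\Lam_f)=\emptyset$ demanded in Theorem \ref{updateA_nospillover}, so Proposition \ref{PropA}(b) gives $X_c^{*}HX_f=0$, whence $X_c^{*}HX_c$ is a nonsingular principal block of the invertible matrix $\bmatrix{X_c & X_f}^{*}H\bmatrix{X_c & X_f}$. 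Theorem \ref{updateA_nospillover} then delivers the rank-$p$ structured no-spillover perturbation $\Delta A=X_c(\Lam_a-\Lam_c)(X_c^{*}HX_c)^{-1}X_c^{*}H$, completing the proof. I expect the main obstacle to be the block verification of the third step: although routine, it must be organized so that the complex-paired and self-paired eigenvalue blocks are handled separately, since each draws on a different consequence of the pairing constraint.
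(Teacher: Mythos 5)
Your proposal is correct and follows essentially the same route as the paper: use Proposition \ref{PropA}(b) and Corollary \ref{blockform} to obtain the block-diagonal/antidiagonal form of $X_c^{*}HX_c$, verify the symmetry condition $X_c^{*}HX_c(\Lam_a-\Lam_c)=\ep_1\ep_2\left(X_c^{*}HX_c(\Lam_a-\Lam_c)\right)^{*}$, and conclude via Theorem \ref{P1soln_comp} (equivalently, via Theorems \ref{updateA} and \ref{updateA_nospillover}, which is what that theorem invokes internally). Your block-by-block verification, including the observation that the self-paired blocks also need $\overline{(x_j^c)^{*}Hx_j^c}=\ep_1 (x_j^c)^{*}Hx_j^c$, merely spells out the step the paper compresses into ``hence it follows.''
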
 
\begin{proof}
It is given that $AX^c_i=X^c_i\Lam^c_i$ and $Ax^c_j=\lam^c_j x^c_j$ holds. Since $\lam_i^c,\,\ep_2 \overline{\lam_i^c},\, \lam_j^c$  are all distinct so using Proposition \ref{PropA} $(b)$ and applying Corollary \ref{blockform} we obtain $X_c^*HX_c=\diag(\Gamma_1,\dots, \Gamma_q,\,(x^c_{2q+1})^*Hx^c_{2q+1},\dots,(x^c_p)^*Hx^c_p)$ with $\Gamma_i=\bmatrix{0& (x^c_i)^*H\tilde{x}^c_i \\ \ep_1((x^c_i)^*H\tilde{x}^c_i)^*& 0}$. Hence it follows that $X_c^*HX_c(\Lam_a-\Lam_c)=\ep_1 \ep_2(X_c^*HX_c(\Lam_a-\Lam_c))^*$ holds. Therefore the desired result follows from Theorem \ref{P1soln_comp}. $\hfill{\square}$

\end{proof}

\begin{remark}
Note that, a sufficient condition \cite{savchenko2004change} for preservation of an eigenvalue $\lam$ of a matrix $A$ under perturbation is that the geometric multiplicity of $\lam$ should be greater than the rank of the perturbation. However, Theorem \ref{P1soln_comp} shows that this need not be a necessary condition for preserving eigenvalues of structured matrices under structured perturbation. Notice that all the eigenvalues $\lam^f_k$ that are preserved under the no spillover perturbation in equation $(\ref{eqn:nospill})$ is independent of geometric multiplicities of the $\lam_k^f.$ For example consider a symmetric matrix 
$$A=\bmatrix{-0.69970&  -1.43911&   0.76575 \\
  -1.43911&   1.46812&   2.08426 \\
   0.76575&   2.08426&   2.10423}.$$ Taking $H$ as the identity matrix of order $3$ it follows that $A\in \J.$ Then $\lam^c_1=-2.1246,\,\lam^c_2=1.0711,\,\lam^f_3=3.9262$ are eigenvalues of $A$. Suppose we wish to replace the eigenvalues $\lam^c_1,\,\lam^c_2$ of $A$ by the desired numbers $\lam^a_1= 2.1457,\,\lam^a_2=1.3342$ in such a way that $\lam^a_1,\,\lam^a_2,\,\lam^f_3$ becomes the eigenvalues of $A+\Delta A,$ that is $\lam^f_3$ remains the eigenvalue of $A$ and $A+\Delta A$ for some perturbation $\Delta A$. Then taking 
\begin{center}
$\Lam_c=\diag(-2.1246,\,1.0711),\,
X_c=\bmatrix{-0.74904&   0.65664\\
  -0.53038&  -0.51457\\
   0.39704&   0.55141},\,\Lam_a=\diag(2.1457,\,1.3342)$
\end{center}
and using the expression in equation $(\ref{eqn:nospill})$ we obtain a no spillover structured perturbation 
$$\Delta A=\bmatrix{2.50934&   1.60757&  -1.17472 \\
   1.60757&   1.27089&  -0.97390 \\
  -1.17472&  -0.97390&   0.75318 }.$$ 
Then we have verified that $\lam^a_1,\,\lam^a_2,\,\lam^f_3$ are eigenvalues of $A+\Delta A,$ that is $\lam^f_3$ is the eigenvalue of $A$ and $A+\Delta A$. Also we note that $rank(\Delta A)=2$ whereas geometric multiplicity of $\lam^f_3$ is $1$. Therefore the given condition in \cite{savchenko2004change} for preservation of eigenvalue is not a necessary condition.
\end{remark}

In the next theorem we present the real structured solution to problem \textbf{(P1)} for structured matrix $A\in \L \subset \R^{n \times n}$ with real $H=\ep_1H^T$. For a matrix $Z=[z_{ij}]\in\C^{n\times n}$ we denote $\overline{Z}=.[\overline{z}_{ij}].$

\begin{theorem} \label{P1soln_real_L}
Let $A\in\L\subset\R^{n\times n}.$ Suppose  $\lam_j^c\in\C\smallsetminus (\R\cup i\R),$ is a nonzero eigenvalue of $A$ and $\{(\lam^c_j,X^c_{j,l})\}_{l=1}^{m_j}$, $\{(\overline{\lam^c_j},\overline{X}^c_{j,l})\}_{l=1}^{m_j}$, $\{(-\lam^c_j,\tilde{X}^c_{j,l})\}_{l=1}^{m_j}$, $\{(-\overline{\lam^c_j},\overline{\tilde{X}}^c_{j,l})\}_{l=1}^{m_j}$ are the corresponding Jordan pairs of $A,$ $j =1,\hdots ,r_1.$ Let $\lam^c_k\in i\R$ be a nonzero eigenvalue of $A$ and $\{(\lam^c_k,X^c_{k,l})\}_{l=1}^{m_k},$ $\{(\overline{\lam^c_k},\overline{X}^c_{k,l})\}_{l=1}^{m_k}$ are the Jordan pairs associated with $\lam^c_k, k=r_1+1, \hdots, r_2.$ Let $\lam_r^c\in\R$ be a nonzero eigenvalue of $A$ and $\{(\lam^c_r,X^c_{r,l})\}_{l=1}^{m_r},$ $\{(-\lam^c_r,\tilde{X}^c_{r,l})\}_{l=1}^{m_r}$ are the associated Jordan pairs of $A,$ $r=r_2+1, \hdots, p.$

 
Let $\lam^a_j,\, \lam^a_k,\,\lam^a_r$ be a collection of scalars with $\lam^a_j \in \C\smallsetminus (\R \cup i\R),\,\lam^a_k\in i\R$ and $\lam^a_r \in \R$ where  $j=1, \hdots, r_1,\, k=r_1+1, \hdots, r_2,\, r=r_2+1, \hdots, p.$ Then any real structured perturbation $\Delta A\in\L$ of $A$ for which $\{(\lam^a_j,X^c_{j,l})\}_{l=1}^{m_j}$, $\{(\overline{\lam^a_j},\overline{X}^c_{j,l})\}_{l=1}^{m_j}$, $\{(-\lam^a_j,\tilde{X}^c_{j,l})\}_{l=1}^{m_j}$, $\{(-\overline{\lam^a_j},\overline{\tilde{X}}^c_{j,l})\}_{l=1}^{m_j}$, $\{(\lam^a_k,X^c_{k,l})\}_{l=1}^{m_k}$, $\{(\overline{\lam^a_k},\overline{X}^c_{k,l})\}_{l=1}^{m_k},$ $\{(\lam^a_r,X^c_{r,l})\}_{l=1}^{m_r},$ $\{(-\lam^a_r,\tilde{X}^c_{r,l})\}_{l=1}^{m_r}$ are Jordan pairs of $A+\Delta A$ is given by 
 \beano \Delta A &=& X_c(\Lam_a-\Lam_c)X_c^{\dagger}- H^{-1}(X_c^\dagger)^*(\Lam_a-\Lam_c)^*X_c^*H-H^{-1}(X_c^{\dagger})^*X_c^*HX_c(\Lam_a-\Lam_c)X_c^{\dagger} \\ && +H^{-1}(I-X_cX_c^{\dagger})Z(I-X_cX_c^{\dagger}),\eeano where $$X_c=\left[X_1^c \hdots X_{r_1}^c \, X_{r_1+1}^c \hdots  X_{r_2}^c \,X^c_{r_2+1} \hdots X^c_p\right],$$ 
 $X_j^c=\left[Y^c_j \, Z_j^c\right],\,Y^c_j=\left[\hat{X}^c_j\, \overline{\hat{X}^c_j}\right],\,Z^c_j=\left[\tilde{X}^c_j\, \overline{\tilde{X}^c_j}\right],\,\hat{X}^c_j=\left[X^c_{j,1}\hdots X^c_{j,m_j} \right],\, \tilde{X}^c_j=\left[\tilde{X}^c_{j,1}\hdots \tilde{X}^c_{j,m_j}\right],\,X^c_k=\left[\hat{X}^c_k\,\,\overline{\hat{X}^c_k}\right],\,\hat{X}^c_k=\left[X^c_{k,1}\dots X^c_{k,m_k}\right], X^c_r=\left[\hat{X}^c_r\,\,\tilde{X}^c_r\right],\,\hat{X}^c_r=\left[X^c_{r,1}\dots X^c_{r,m_r}\right],\,\tilde{X}^c_r=\left[\tilde{X}^c_{r,1}\dots \tilde{X}^c_{r,m_r}\right],$ $$\Lam_c=\diag(\Lam_1^c, \hdots, \Lam_{r_1}^c, \Lam^c_{r_1+1},\hdots,\Lam^c_{r_2},\,\Lam^c_{r_2+1},\hdots ,\Lam^c_p),$$ 
 $\Lam^c_j=\diag(U^c_j,\, V^c_j),\,U^c_j=\diag(\hat{\Lam}^c_j,\overline{\hat{\Lam}^c_j}),$ $V^c_j=\diag(\tilde{\Lam}^c_j,\overline{\tilde{\Lam}^c_j}),$ $\hat{\Lam}^c_j=\diag(J_{1}(\lam^c_j),\hdots,J_{m_j}(\lam^c_j)),$ $\tilde{\Lam}^c_j=\diag(J_{1}(-\lam^c_j),\hdots,J_{m_j}(-\lam^c_j)),$ $\Lam^c_k=\diag(\hat{\Lam}^c_k,\overline{\hat{\Lam}^c_k}),$ $\hat{\Lam}^c_k=\diag(J_{1}(\lam^c_k),\hdots,J_{m_k}(\lam^c_k)),$ $\Lam^c_r=\diag(\hat{\Lam}^c_r,\tilde{\Lam}^c_r),$ $\hat{\Lam}^c_r=\diag(J_{1}(\lam^c_r),\hdots,J_{m_r}(\lam^c_r)),$ $\tilde{\Lam}^c_r=\diag(J_{1}(-\lam^c_r),\hdots,J_{m_r}(-\lam^c_r)),$ $$\Lam_a=\diag(\Lam_1^a, \hdots, \Lam_{r_1}^a,\, \Lam^a_{r_1+1},\hdots,\Lam^a_{r_2},\,\Lam^a_{r_2+1},\hdots ,\Lam^a_p),$$ 
 $\Lam^a_j=\diag(U1^a_j,\, V1^a_j),\,U1^a_j=\diag(\hat{\Lam}^a_j,\overline{\hat{\Lam}^a_j}),\,V1^a_j=\diag(\tilde{\Lam}^a_j,\overline{\tilde{\Lam}^a_j}),$ $\hat{\Lam}^a_j=\diag(J_{1}(\lam^a_j),\hdots, J_{m_j}(\lam^a_j)),$ $\tilde{\Lam}^a_j=\diag(J_{1}(-\lam^a_j),\hdots,J_{m_j}(-\lam^a_j)),$ $\Lam^a_k=\diag(\hat{\Lam}^a_k,\overline{\hat{\Lam}^a_k}),$ $\hat{\Lam}^a_k=\diag(J_{1}(\lam^a_k),\hdots,J_{m_k}(\lam^a_k)),$ $\Lam^a_r=\diag(\hat{\Lam}^a_r,\tilde{\Lam}^a_r),$ $\hat{\Lam}^a_r=\diag(J_{1}(\lam^a_r),\hdots,J_{m_r}(\lam^a_r)),\,\tilde{\Lam}^a_r=\diag(J_{1}(-\lam^a_r),\hdots,J_{m_r}(-\lam^a_r))$ with $Z=-\ep_1 Z^T\in\R^{n\times n}$ is arbitrary.

Moreover, if $\{\lam^c_j,\,\overline{\lam^c_j},\,-\lam^c_j,\,-\overline{\lam^c_j},\,\lam^c_k,\,\overline{\lam^c_k},\,\lam^c_r,\,-\lam^c_r\,:\,j=1,\hdots, r_1,\,k=r_1+1, \hdots, r_2,\linebreak r=r_2+1, \hdots, p \}\cap \{\lam^f_q\,:\,q=m+1, \hdots, n\}=\emptyset$ 
then a real structured no spillover perturbation of rank $m$ is given by $\Delta A=X_c(\Lam_a-\Lam_c)(X_c^*HX_c)^{-1}X_c^*H $ where $m=\sum_{j=1}^{r_1}4\alpha(\lam^c_j)+\sum_{k=r_1+1}^{r_2}2\alpha(\lam^c_k)+\sum_{r=r_2+1}^{p}2\alpha(\lam^c_r)$. 
\end{theorem}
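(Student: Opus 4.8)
The plan is to reduce the real statement to the already-established complex case of Theorem \ref{P1soln_comp} and then upgrade the resulting perturbation to a real matrix. First I would observe that a real $A\in\L$ defined through $\star=T$ simultaneously lies in the complexified structured set $\S_{\C}:=\{B\in\C^{n\times n}:H^{-1}B^{*}H=-B\}$: since $A$ and $H$ are real, $A^{*}=A^{T}$ and $H^{*}=H^{T}=\ep_1 H$, so $H^{-1}A^{*}H=H^{-1}A^{T}H=-A$, with the same $\ep_1$ and with $\ep_2=-1$. Hence every structural ingredient needed to invoke Proposition \ref{PropA}, Proposition \ref{eigvec_ortho} and Theorem \ref{P1soln_comp} is available over $\C$.

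Next I would verify that the prescribed spectral data are closed under the complex adjoint pairing $\lam\mapsto\ep_2\overline{\lam}=-\overline{\lam}$. For a genuinely complex $\lam^c_j$ the listed quadruple $\{\lam^c_j,\overline{\lam^c_j},-\lam^c_j,-\overline{\lam^c_j}\}$, for $\lam^c_k\in i\R$ the pair $\{\lam^c_k,\overline{\lam^c_k}=-\lam^c_k\}$, and for $\lam^c_r\in\R$ the pair $\{\lam^c_r,-\lam^c_r\}$ are each closed under $\lam\mapsto-\overline{\lam}$, and the aimed scalars $\lam^a$ carry the same symmetry. Thus the ordered data $(X_c,\Lam_c)$ and $(X_c,\Lam_a)$ meet exactly the pairing hypotheses of Theorem \ref{P1soln_comp} with $\ep_2=-1$; the block-orthogonality of $X_c^{\star}HX_c$ from Proposition \ref{eigvec_ortho}(b)--(c) is what makes the compatibility condition $X_c^{\star}HX_c(\Lam_a-\Lam_c)=\ep_1\ep_2(X_c^{\star}HX_c(\Lam_a-\Lam_c))^{\star}$ hold with $R=I$. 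Applying that theorem then yields a $\Delta A\in\S_{\C}$ of the stated form (the displayed sign $-H^{-1}(\cdots)$ is precisely $\ep_2 H^{-1}(\cdots)$ at $\ep_2=-1$) reproducing all the required Jordan pairs.

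The crux, and the step I expect to be the main obstacle, is to show that this $\Delta A$ can be taken real. My plan is to establish it for the minimal part $(Z=0)$ by a uniqueness argument and then append a real homogeneous term. Writing $\overline{X_c}=X_cP$, $\overline{\Lam_c}=P^{-1}\Lam_cP$ and $\overline{\Lam_a}=P^{-1}\Lam_aP$ for the real, symmetric, involutory permutation $P$ that swaps the conjugate column blocks (the real-eigenvalue blocks being fixed), I would conjugate the defining identity $\Delta A_oX_c=X_c\Lam_a-AX_c=:B$ and use $\overline{A}=A$, $\overline{B}=BP$ to obtain $\overline{\Delta A_o}X_c=B$, so $\overline{\Delta A_o}$ solves the same linear system. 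Moreover $\overline{\Delta A_o}\in\S_{\C}$: conjugating $H^{-1}(\Delta A_o)^{*}H=-\Delta A_o$ and using $\overline{M^{*}}=M^{T}=(\overline{M})^{*}$ gives $H^{-1}(\overline{\Delta A_o})^{*}H=-\overline{\Delta A_o}$. Since conjugation preserves the Frobenius norm, $\overline{\Delta A_o}$ is again a minimal structured solution, so the uniqueness asserted in Theorem \ref{Thm1:AdhAlam} forces $\overline{\Delta A_o}=\Delta A_o$, i.e. $\Delta A_o$ is real. Choosing $Z$ real with $Z=-\ep_1 Z^{T}$ keeps the homogeneous term real, because $X_cX_c^{\dagger}$ is the orthogonal projector onto the conjugate-closed space $\mathfrak{R}(X_c)$ and is therefore real, as is $H^{-1}$. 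Hence $\Delta A$ is real, and being in $\S_{\C}$ it satisfies $H^{-1}(\Delta A)^{T}H=-\Delta A$, that is $\Delta A\in\L\subset\R^{n\times n}$.

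Finally, for the no spillover assertion I would follow Theorem \ref{updateA_nospillover}. The disjointness hypothesis together with Proposition \ref{PropA}(b) gives $X_c^{\star}HX_f=0$ for the (possibly unknown) complementary Jordan data $(X_f,\Lam_f)$, so the perturbation $\Delta A=X_c(\Lam_a-\Lam_c)(X_c^{\star}HX_c)^{-1}X_c^{\star}H$ annihilates $\mathfrak{R}(X_f)$ and leaves those Jordan pairs untouched while $(A+\Delta A)X_c=X_c\Lam_a$; the membership $\Delta A^{[\star]}=\ep_2\Delta A$ and the rank count $m=\sum_{j}4\alpha(\lam^c_j)+\sum_{k}2\alpha(\lam^c_k)+\sum_{r}2\alpha(\lam^c_r)$ (the total number of changed eigenvalues counted with multiplicity) follow exactly as there, and its realness follows from the same uniqueness-plus-conjugation argument applied to the minimal structured solution.
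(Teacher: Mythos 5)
Your proposal is correct and follows essentially the same route as the paper: both verify, via the $H$-orthogonality relations of Proposition \ref{PropA} and the closure of the spectral data under $\lam\mapsto\ep_2\overline{\lam}=-\overline{\lam}$, that $X_c^*HX_c(\Lam_a-\Lam_c)=-\ep_1\left(X_c^*HX_c(\Lam_a-\Lam_c)\right)^*$, then invoke the complex structured results (Theorem \ref{P1soln_comp} / Theorem \ref{updateA}, and Theorem \ref{updateA_nospillover} for the no-spillover part), and finally show the resulting $\Delta A$ is real. The only genuine divergence is the reality argument for the particular ($Z=0$) solution: the paper exhibits the explicit block-swap permutation $R$ with $\overline{X_c}=X_cR$ and $\overline{X_c(\Lam_a-\Lam_c)}=X_c(\Lam_a-\Lam_c)R$ and checks term by term that $X_c(\Lam_a-\Lam_c)X_c^{\dagger}$ and $X_cX_c^{\dagger}$ are conjugation-invariant, whereas you conjugate the interpolation conditions and appeal to the uniqueness of the minimal structured solution in Theorem \ref{Thm1:AdhAlam}. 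Your variant is slicker and avoids the computation, at the cost of leaning on that uniqueness clause; note only that for the no-spillover formula $X_c(\Lam_a-\Lam_c)(X_c^*HX_c)^{-1}X_c^*H$ the perturbation is not characterized by minimality, so there you should either argue that it is the unique matrix annihilating $\mathfrak{R}(X_f)$ (a conjugation-invariant subspace, since the fixed spectrum of the real $A$ is closed under conjugation) and mapping $X_c$ to $X_c(\Lam_a-\Lam_c)$, or simply conjugate the closed-form expression as the paper does. Both arguments are sound.
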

\begin{proof}
It is given that $AX^c_j=X^c_j\Lam^c_j,\,AX^c_k=X^c_k\Lam^c_k,\,AX^c_r=X^c_r\Lam^c_r$ holds and as $\lam_j^c,\,\overline{\lam_j^c},$\linebreak $-\lam_j^c,\,-\overline{\lam_j^c},\,\lam^c_k,\,\,\overline{\lam_k^c},\,\lam^c_r,\,-\lam^c_r$ are all different so from Proposition \ref{PropA} $(b)$ we have \linebreak $X_c^*HX_c=\diag((X^c_1)^*HX^c_1,\,\dots \,,(X^c_{r_1})^*HX^c_{r_1},\,\,(X^c_{r_1+1})^*HX^c_{r_1+1},\, \dots \,,(X^c_{r_2})^*HX^c_{r_2},$\linebreak $(X^c_{r_2+1})^*HX^c_{r_2+1},\dots,(X^c_p)^*HX^c_p).$ Again as $\left\{\lam^c_j,\,\overline{\lam}^c_j\right\} \cap \left\{-\lam^c_j,\,-\overline{\lam}^c_j\right\} =\emptyset$ so by Proposition \ref{PropA} $(b)$ we have $(Y^c_j)^*HY^c_j=(Z^c_j)^*HZ^c_j=0$ thus $(X^c_j)^*HX^c_j=\bmatrix{0& (Y^c_j)^*HZ^c_j\\\ep_1 ((Y^c_j)^*HZ^c_j)^*& 0}$. Also $\lam^c_j \neq \overline{\lam^c_j}$ therefore from Proposition \ref{PropA} $(b)$ we have $(\hat{X}^c_j)^*H\tilde{X}^c_j= \left(\overline{\hat{X}^c_j}\right)^*H\overline{\tilde{X}^c_j}=0$ hence $(Y^c_j)^*HZ^c_j=\bmatrix{0& (\hat{X}^c_j)^*H\overline{\tilde{X}^c_j}\\ \overline{(\hat{X}^c_j)^*H\overline{\tilde{X}^c_j}}& 0}.$ Hence $(X^c_j)^*HX^c_j=\bmatrix{0& \Gamma_j\\ \ep_1\Gamma_j^*& 0}$ with $\Gamma_j=\bmatrix{0& (\hat{X}^c_j)^*H\overline{\tilde{X}^c_j}\\ \overline{(\hat{X}^c_j)^*H\overline{\tilde{X}^c_j}}& 0}$. 
Further $\lam^c_k \neq \overline{\lam^c_k}$ so by Proposition \ref{PropA} $(b)$ we get $(\hat{X}^c_k)^*H\overline{\hat{X}^c_k}=\left(\overline{\hat{X}^c_k}\right)^*H\hat{X}^c_k=0$, thus $(X^c_k)^*HX^c_k=\mbox{diag}\left( (\hat{X}^c_k)^*H\hat{X}^c_k,\, \overline{(\hat{X}^c_k)^*H\hat{X}^c_k} \right).$
Again as $\lam^c_r\neq -\lam^c_r \in \R$ so from Proposition \ref{PropA} $(b)$ we obtain $(\hat{X}^c_r)^*H\hat{X}^c_r=(\tilde{X}^c_r)^*H\tilde{X}^c_r=0$, hence $(X^c_r)^*HX^c_r=\bmatrix{0& (\hat{X}^c_r)^*H\tilde{X}^c_r\\ \ep_1 ((\hat{X}^c_r)^*H\tilde{X}^c_r)^*& 0}$. It should be noted that as $\lam^c_r\in \R$ so its corresponding Jordan chain consists of real vectors that is the matrices $X^c_{r,l}$ and $\tilde{X}^c_{r,l},$ $l=1,\dots,m_r$ are real. 

Hence on computing we note that $(X_c)^*HX_c(\Lam_a-\Lam_c)=-\ep_1\left(X_c^*HX_c(\Lam_a-\Lam_c)\right)^*$ holds. Thus setting $R$ as the identity matrix the desired condition in equation $(\ref{eqn:cond22})$ is achieved. Therefore by Theorem \ref{updateA} we obtain the structured matrix \begin{eqnarray} \label{updateA_L}
\Delta A = X_c(\Lam_a-\Lam_c)X_c^{\dagger}- H^{-1}(X_c^\dagger)^*(\Lam_a-\Lam_c)^*X_c^*H-H^{-1}(X_c^{\dagger})^*X_c^*HX_c(\Lam_a-\Lam_c)X_c^{\dagger} \nonumber \\  +H^{-1}(I-X_cX_c^{\dagger})Z(I-X_cX_c^{\dagger})\in \L \end{eqnarray}
for which $\{(\lam^a_j,X^c_{j,l})\}_{l=1}^{m_j}$, $\{(\overline{\lam^a_j},\overline{X}^c_{j,l})\}_{l=1}^{m_j}$, $\{(-\lam^a_j,\tilde{X}^c_{j,l})\}_{l=1}^{m_j}$, $\{(-\overline{\lam^a_j},\overline{\tilde{X}}^c_{j,l})\}_{l=1}^{m_j}$, $\{(\lam^a_k,X^c_{k,l})\}_{l=1}^{m_k}$, $\{(\overline{\lam^a_k},\overline{X}^c_{k,l})\}_{l=1}^{m_k}$, $\{(\lam^a_r,X^c_{r,l})\}_{l=1}^{m_r},$ $\{(-\lam^a_r,\tilde{X}^c_{r,l})\}_{l=1}^{m_r}$ are Jordan pairs of $A+\Delta A$ with $Z=-\ep_1 Z^*\in \C^{n \times n}$ is arbitrary. 

Now we prove that $\Delta A$ is a real matrix for $Z^T=-\ep_1 Z \in \R^{n \times n}.$ We note that $\overline{X_c}=X_cR$ and $\overline{X_c(\Lam_a-\Lam_c)}=X_c(\Lam_a-\Lam_c)R$ for $R=\mbox{diag}(R_1,\dots,R_{r_1},\,R_{r_1+1},\dots,R_{r_2},$\linebreak $R_{r_2+1},\dots,R_p )$ with $R_j=\mbox{diag}\left(\tilde{R}_j,\,\tilde{R}_j \right),\,\tilde{R}_j=\bmatrix{0& I_{s_j}\\I_{s_j}& 0},\,R_k=\bmatrix{0& I_{s_k}\\I_{s_k}& 0},\,R_r=I_{2s_r}$ where $s_j=\sum_{l=1}^{m_j} \#(X^c_{j,l}),\,s_k=\sum_{l=1}^{m_k} \#(X^c_{k,l}),\,s_r=\sum_{l=1}^{m_r} \#(X^c_{r,l})$ and $I_u$ denotes the identity matrix of order $u \times u$. Thus $\overline{X_c(\Lam_a-\Lam_c)X_c^{\dagger}}=\overline{X_c(\Lam_a-\Lam_c)}\left(\overline{X_c}\right)^{\dagger}=X_c(\Lam_a-\Lam_c)RR^{-1}X_c^{\dagger}=X_c(\Lam_a-\Lam_c)X_c^{\dagger}$ and $\overline{X_cX_c^{\dagger}}=\overline{X_c}\left(\overline{X_c}\right)^{\dagger}=X_cRR^{-1}X_c^{\dagger}=X_cX_c^{\dagger}$ therefore the matrix $\Delta A$ in equation $(\ref{updateA_L})$ is a real structured matrix for arbitrary $Z=-\ep_1 Z^T\in \R^{n \times n}.$ 

Further, since $\{\lam^c_j,\,\overline{\lam^c_j},\,-\lam^c_j,\,-\overline{\lam^c_j},\,\lam^c_k,\,\overline{\lam^c_k},\,\lam^c_r,\,-\lam^c_r\,:\,j=1,\hdots ,r_1,\,k=r_1+1,\hdots ,r_2,\linebreak r=r_2+1,\hdots ,p \}\cap \{\lam^f_q\,:\,q=m+1,\hdots ,n\}=\emptyset$ then by Theorem \ref{updateA_nospillover} it follows that $X_c^*HX_c$ is nonsingular and we have already verified that $X_c^*HX_c(\Lam_a-\Lam_c)=-\ep_1\left(X_c^* HX_c(\Lam_a-\Lam_c)\right)^*$ holds. Hence the existence of structured solution to problem \textbf{(P1)} is guaranteed by Theorem \ref{updateA_nospillover}. Thus structured no spillover perturbation follows from Theorem \ref{updateA_nospillover}, and it is given by $\Delta A=X_c(\Lam_a-\Lam_c)(X_c^* HX_c)^{-1}X_c^* H\in \L$. Now we will show that $\Delta A$ is real. Note that as $H$ is real so
\beano \overline{\Delta A} &=& \overline{X_c(\Lam_a-\Lam_c)}\left(\left(\overline{X_c}\right)^*H\overline{X_c}\right)^{-1}\left(\overline{X_c}\right)^*H \\ && =X_c(\Lam_a-\Lam_c)R \left(R^*X_c^*HX_cR\right)^{-1}R^*X_c^*H \\&& = X_c(\Lam_a-\Lam_c)RR^{-1}\left(X_c^*HX_c\right)^{-1}(R^*)^{-1}R^*X_c^*H \\ &&=X_c(\Lam_a-\Lam_c)\left(X_c^*HX_c\right)^{-1}X_c^*H=\Delta A,\eeano
hence $\Delta A=X_c(\Lam_a-\Lam_c)\left(X_c^*HX_c\right)^{-1}X_c^*H \in \L$ is a real structured perturbation. This completes the proof. $\hfill{\square}$ 
\end{proof}

The following theorem presents the structured solution to problem \textbf{(P1)} when all the eigenvalues to be changed are simple.

\begin{theorem}
Let $A\in\L\subset\R^{n\times n}.$ Suppose $(\lam^c_j,x^c_j),\,(\overline{\lam^c_j},\overline{x^c_j}),\,(-\lam^c_j,\tilde{x}^c_j),\, (-\overline{\lam^c_j},\overline{\tilde{x}^c_j}),\, (\lam^c_k,x^c_k),\linebreak (\overline{\lam^c_k},\overline{x^c_k})$ and $(\lam^c_r,x^c_r),\,(-\lam^c_r,\tilde{x}^c_r)$ are eigenpairs of $A$ where $j=1, \hdots, r_1,\, k=r_1+1, \hdots, r_2,\,r=r_2+1, \hdots, p$ and $\lam^c_j \neq 0\in \C\smallsetminus (\R\cup i\R),\,\lam^c_k \neq 0\in i\R,\,\lam^c_r \neq 0\in \R.$ 

Let $\lam^a_j, \lam^a_k,\,\lam^a_r$ be a collection of scalars with $\lam^a_j \in \C\smallsetminus (\R\cup i\R),\,\lam^a_k \in i\R$ and $\lam^a_r\in \R$ where  $j=1, \hdots, r_1, \,k=r_1+1,\hdots, r_2,\,r=r_2+1, \hdots,p.$ Let the eigenvalues $\lam_j^c,\,\overline{\lam^c_j},\,-\lam_j^c,\,-\overline{\lam^c_j},\,\lam_k^c,\,\overline{\lam^c_k},\linebreak \lam_r^c,\,-\lam^c_r$ be all simple and distinct. Then any real structured perturbation $\Delta A\in\L$ of $A$ for which $(\lam^a_j,x^c_j),\,(\overline{\lam^a_j},\overline{x^c_j}),\,(-\lam^a_j,\tilde{x}^c_j),\, (-\overline{\lam^a_j},\overline{\tilde{x}^c_j}),\, (\lam^a_k,x^c_k),\,(\overline{\lam^a_k},\overline{x^c_k}),\,(\lam^a_r,x^c_r),\,(-\lam^a_r,\tilde{x}^c_r)$ are eigenpairs of $A+\Delta A$ is given by 
 \beano \Delta A &=& X_c(\Lam_a-\Lam_c)X_c^{\dagger}- H^{-1}(X_c^\dagger)^*(\Lam_a-\Lam_c)^*X_c^*H-H^{-1}(X_c^{\dagger})^*X_c^*HX_c(\Lam_a-\Lam_c)X_c^{\dagger} \\ && +H^{-1}(I-X_cX_c^{\dagger})Z(I-X_cX_c^{\dagger}),\eeano where $X_c=\bmatrix{X_1^c & \hdots & X_{r_1}^c & X_{r_1+1}^c & \hdots & X_{r_2}^c& X^c_{r_2+1}& \hdots & X^c_p},\,X_j^c=\bmatrix{x_j^c & \overline{x_j^c}& \tilde{x}_j^c& \overline{\tilde{x}_j^c}},\linebreak X^c_k=\bmatrix{x_k^c & \overline{x_k^c}},\,X^c_r=\bmatrix{x^c_r& \tilde{x}^c_r},$ $\Lam_c=\diag(\Lam_1^c, \hdots, \Lam_{r_1}^c, \Lam^c_{r_1+1}, \hdots, \Lam_{r_2}^c,\,\Lam^c_{r_2+1},\hdots ,\Lam^c_p),\,\Lam^c_j=\diag(\lam^c_j,\,\overline{\lam^c_j},\,-\lam^c_j,\,-\overline{\lam^c_j}),\,\Lam^c_k=\diag(\lam^c_k,\,\overline{\lam^c_k}),\,\Lam^c_r=\diag(\lam^c_r,\,-\lam^c_r),$ $\Lam_a=\diag(\Lam_1^a, \hdots, \Lam_{r_1}^a,\linebreak \Lam^a_{r_1+1}, \hdots, \Lam_{r_2}^a,\,\Lam^a_{r_2+1},\hdots ,\Lam^a_p),\,\Lam^a_j=\diag(\lam^a_j,\,\overline{\lam^a_j},\,-\lam^a_j,\,-\overline{\lam^a_j}),\,\Lam^a_k=\diag(\lam^a_k,\,\overline{\lam^a_k}),\,\Lam^a_r=\diag(\lam^a_r,\linebreak -\lam^a_r),$ and $Z^T=-\ep_1 Z\in\R^{n\times n}$ is arbitrary.
 
Moreover, if $\{\lam^c_j,\,\overline{\lam^c_j},\,-\lam^c_j,\,-\overline{\lam^c_j},\,\lam^c_k,\,\overline{\lam^c_k},\,\lam^c_r,\,-\lam^c_r\,:\,j=1, \hdots, r_1, k=r_1+1, \hdots, r_2, r=r_2+1, \hdots, p\}\cap \{\lam^f_l\,:\,l=2p+2r_1+1, \hdots, n\}=\emptyset$ 
then a real structured no spillover perturbation of rank $2(p+r_1)$ is given by $\Delta A=X_c(\Lam_a-\Lam_c)(X_c^*HX_c)^{-1}X_c^*H.$ 
\end{theorem}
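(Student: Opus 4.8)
The plan is to read this as the simple-eigenvalue specialization of Theorem \ref{P1soln_real_L}: when every eigenvalue $\lam_j^c,\lam_k^c,\lam_r^c$ to be modified is simple, each geometric multiplicity equals $1$ and each Jordan block $J_l(\cdot)$ collapses to a $1\times1$ scalar block, so that every $X_{\cdot,l}^c$ reduces to a single eigenvector and the block matrices $X_c,\Lam_c,\Lam_a$ reduce exactly to those displayed in the statement. Consequently it suffices to re-establish the two structural facts that drive the proof of Theorem \ref{P1soln_real_L} and then quote that theorem. First I would record the eigenvalue pairing: since $A\in\L$ is real, $\ep_2=-1$ and Proposition \ref{eigvec_ortho}(a) gives the pairing $\lam\mapsto-\overline{\lam}$, which together with closure of $\sigma(A)$ under conjugation forces the genuinely complex eigenvalues into the quadruples $\{\lam_j^c,\overline{\lam_j^c},-\lam_j^c,-\overline{\lam_j^c}\}$, the purely imaginary ones into pairs $\{\lam_k^c,\overline{\lam_k^c}\}$, and the real ones into pairs $\{\lam_r^c,-\lam_r^c\}$, exactly as assumed.

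Next I would compute the block pattern of $X_c^\star HX_c$. Because all the scalars $\lam_j^c,\overline{\lam_j^c},-\lam_j^c,-\overline{\lam_j^c},\lam_k^c,\overline{\lam_k^c},\lam_r^c,-\lam_r^c$ are distinct, Proposition \ref{PropA}(b) annihilates every $\langle\cdot,\cdot\rangle_H$ between eigenvectors whose eigenvalues are not related by $\lam=-\overline{\mu}$; applying this together with Corollary \ref{blockform} yields $X_c^\star HX_c=\diag(\cdots)$, in which the $X_j^c$-block is the $4\times4$ anti-block $\bmatrix{0 & \Gamma_j \\ \ep_1\Gamma_j^\star & 0}$ with $\Gamma_j$ the $2\times2$ anti-diagonal block built from $(x_j^c)^\star H\overline{\tilde{x}_j^c}$, the $X_k^c$-block is the diagonal $\diag\big((x_k^c)^\star Hx_k^c,\ \overline{(x_k^c)^\star Hx_k^c}\big)$, and the $X_r^c$-block is the $2\times2$ anti-diagonal $\bmatrix{0 & h_r \\ \ep_1\overline{h_r} & 0}$ with $h_r=(x_r^c)^\star H\tilde{x}_r^c$. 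This is precisely the $m_{\cdot}=1$ shadow of the blocks obtained in Theorem \ref{P1soln_real_L}, and from it a direct computation gives the skew-type identity $X_c^\star HX_c(\Lam_a-\Lam_c)=-\ep_1\big(X_c^\star HX_c(\Lam_a-\Lam_c)\big)^\star$, which is exactly condition (\ref{eqn:cond22}) with $R=I$ and $\ep_2=-1$.

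With these two inputs in hand the conclusion is immediate: Theorem \ref{updateA} (with $R=I$) produces the displayed parametric family of structured $\Delta A\in\L$, and under the disjointness hypothesis $\{\lam_j^c,\overline{\lam_j^c},-\lam_j^c,-\overline{\lam_j^c},\lam_k^c,\overline{\lam_k^c},\lam_r^c,-\lam_r^c\}\cap\{\lam_l^f\}=\emptyset$ the matrix $X_c^\star HX_c$ is nonsingular (Proposition \ref{PropA}(b) applied to the complementary pair), so Theorem \ref{updateA_nospillover} yields $\Delta A=X_c(\Lam_a-\Lam_c)(X_c^\star HX_c)^{-1}X_c^\star H$. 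Reality of $\Delta A$ in both cases I would obtain by the same conjugation-permutation argument as in Theorem \ref{P1soln_real_L}: one exhibits a permutation matrix $R$ with $\overline{X_c}=X_cR$ and $\overline{X_c(\Lam_a-\Lam_c)}=X_c(\Lam_a-\Lam_c)R$ (here $R$ swaps the conjugate partners within each quadruple and each imaginary pair and fixes the real pairs), whence $\overline{\Delta A}=\Delta A$ for $Z^T=-\ep_1 Z$ real. The only quantitative point to settle is the rank of the no-spillover perturbation, which equals $\rank(X_c)$, i.e.\ the number of columns of $X_c$; counting $4$ per quadruple, $2$ per imaginary pair and $2$ per real pair gives $4r_1+2(r_2-r_1)+2(p-r_2)=2(p+r_1)$, the $\alpha(\cdot)=1$ instance of the rank formula in Theorem \ref{P1soln_real_L}. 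I do not anticipate a genuine obstacle, since the whole argument is the specialization of an already-proved theorem; the only care needed is in reading off the correct anti-block patterns of $X_c^\star HX_c$ and in the bookkeeping that produces rank $2(p+r_1)$.
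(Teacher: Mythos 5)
Your proposal is correct and follows essentially the same route as the paper: both use Proposition \ref{PropA}(b) and Corollary \ref{blockform} to read off the block-diagonal/anti-diagonal structure of $X_c^\star HX_c$ (the $4\times4$ anti-block for each complex quadruple, the diagonal block for each purely imaginary pair, the $2\times2$ anti-diagonal block for each real pair), verify the identity $X_c^\star HX_c(\Lam_a-\Lam_c)=-\ep_1\bigl(X_c^\star HX_c(\Lam_a-\Lam_c)\bigr)^\star$, and then invoke Theorem \ref{P1soln_real_L} (equivalently, Theorems \ref{updateA} and \ref{updateA_nospillover} with $R=I$). The reality and rank-counting details you spell out are exactly those the paper delegates to Theorem \ref{P1soln_real_L}.
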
 
\begin{proof}
It is given that $AX^c_j=X^c_j\Lam^c_j,\,AX^c_k=X^c_k\Lam^c_k,\,AX^c_r=X^c_r\Lam^c_r$ holds. Since $\lam_j^c,\,\overline{\lam^c_j},$\linebreak $-\lam_j^c,\,-\overline{\lam^c_j},\, \lam_k^c,\,\overline{\lam^c_k},\,\lam_r^c,\,-\lam^c_r$ are all distinct so using Proposition \ref{PropA} $(b)$ we obtain $X_c^*HX_c=\diag((X^c_1)^*HX^c_1, \hdots,(X^c_{r_1})^*HX^c_{r_1},\,(X^c_{r_1+1})^*HX^c_{r_1+1},\hdots,(X^c_{r_2})^*HX^c_{r_2},\hdots, (X^c_{p})^*HX^c_{p})$. As $\linebreak $ $\lam^c_j\neq 0\in \C\smallsetminus (\R \cup i\R)$ so from Proposition \ref{PropA} $(b)$ we obtain $(X^c_j)^*HX^c_j=\bmatrix{0& \Gamma_j \\ \ep_1 \Gamma_j^*& 0}$ with $\Gamma_j=\bmatrix{0& (x^c_j)^*H\overline{\tilde{x}^c_j}\\ \overline{(x^c_j)^*H\overline{\tilde{x}^c_j}}& 0}.$ Since $\lam^c_k \neq 0\in i\R$ so using Proposition \ref{PropA} $(b)$ we get $(X^c_k)^*HX^c_k=\diag((x^c_k)^*Hx^c_k,\,\overline{(x^c_k)^*Hx^c_k}).$ Again as $\lam^c_r \neq 0$ so from corollary \ref{blockform} we have $(X^c_r)^*HX^c_r=\bmatrix{0& (x^c_r)^*H\tilde{x}^c_r\\ \ep_1 ((x^c_r)^*H\tilde{x}^c_r)^*& 0}.$ It should be noted that $x^c_r$ and $\tilde{x}^c_r$ are real eigenvectors of $A$. Thus it is easy to verify that $X_c^*HX_c(\Lam_a-\Lam_c)=-\ep_1 \left(X_c^*HX_c(\Lam_a-\Lam_c) \right)^*$ holds. Hence the desired result follows from Theorem \ref{P1soln_real_L}. 
$\hfill{\square}$
\end{proof}

In the next theorem we have presented a real structured solution to Problem \textbf{(P1)} for structured matrix $A\in \J \subset \R^{n \times n}$ with real $H=\ep_1 H^T$. 

\begin{theorem} \label{P1soln_real_J}
Let $A\in\J\subset\R^{n\times n}.$ Suppose $\{(\lam^c_i,X^c_{i,l})\}_{l=1}^{m_i}$, $\{(\overline{\lam^c_i},\overline{X}^c_{i,l})\}_{l=1}^{m_i}$, $\{(\lam^c_j,X^c_{j,l})\}_{l=1}^{m_j}$ are Jordan pairs of $A,$ $i=1, \hdots, r,$ $j=2r+1, \hdots, p,$ $\lam^c_i \in \C\smallsetminus \R$ is a nonzero eigenvalue of $A$ and $\lam^c_j\in \R.$ 


Let $\lam^a_i,\, \lam^a_j$ be a collection of scalars with $\lam^a_i\in \C\smallsetminus \R$ and $\lam^a_j \in \R$ where  $i=1, \hdots,r,$ $j=2r+1, \hdots, p.$ Then any real structured perturbation $\Delta A\in\J$ of $A$ for which $\{(\lam^a_i,X^c_{i,l})\}_{l=1}^{m_i},$ $\{(\overline{\lam^a_i}, \overline{X}^c_{i,l})\}_{l=1}^{m_i}$, $\{(\lam^a_j, X^c_{j,l})\}_{l=1}^{m_j}$ are Jordan pairs of $A+\Delta A$ is given by 
 \beano \Delta A &=& X_c(\Lam_a-\Lam_c)X_c^{\dagger}+ H^{-1}(X_c^\dagger)^*(\Lam_a-\Lam_c)^*X_c^*H-H^{-1}(X_c^{\dagger})^*X_c^*HX_c(\Lam_a-\Lam_c)X_c^{\dagger} \\ && +H^{-1}(I-X_cX_c^{\dagger})Z(I-X_cX_c^{\dagger}),\eeano where $$X_c=\bmatrix{X_1^c & \hdots & X_{r}^c & X_{2r+1}^c & \hdots & X_p^c},$$  
 $X_i^c=\bmatrix{\hat{X}_i^c & \overline{\hat{X}_i^c}},$ $\hat{X}_i^c=\bmatrix{X^c_{i,1}\hdots X^c_{i,m_i}},$ $X^c_j=\bmatrix{X^c_{j,1}\hdots X^c_{j,m_j}},$ $$\Lam_c=\diag(\Lam_1^c, \hdots, \Lam^c_{r}, \Lam^c_{2r+1}, \hdots, \Lam_p^c),$$ 
 $\Lam_i^c =\diag\left(\hat{\Lam}_i^c, \overline{\hat{\Lam}_i^c}\right),\,\hat{\Lam}_i^c=\diag(J_{1}(\lam^c_i), \hdots,J_{m_i}(\lam^c_i)),\,\Lam^c_j=\diag(J_{1}(\lam^c_j),\hdots,J_{m_j}(\lam^c_j))$ and $$\Lam_a=\diag(\Lam_1^a, \hdots, \Lam^a_{r}, \Lam^a_{2r+1}, \hdots, \Lam_p^a),$$ $\Lam_i^a =\diag\left(\hat{\Lam}_i^a, \overline{\hat{\Lam}_i^a}\right),$ $\hat{\Lam}_i^a=\diag(J_{1}(\lam^a_i), \hdots,J_{m_i}(\lam^a_i)),\,\Lam^a_j=\diag(J_{1}(\lam^a_j),\hdots,J_{m_j}(\lam^a_j))$ with $Z=\ep_1 Z^T\in\R^{n\times n}$ is arbitrary.
 
Moreover, if $\left\{\lam^c_i,\,\overline{\lam^c_i},\,\lam^c_j\,:\,i=1, \hdots, r,\, j=2r+1, \hdots, p\right\}\cap \{\lam^f_k\,:\,k=q+1, \hdots, n\}=\emptyset$ then a real structured no spillover perturbation is given by $\Delta A=X_c(\Lam_a-\Lam_c)(X_c^*HX_c)^{-1}X_c^*H$ where $q=\sum_{i=1}^{r}2\alpha(\lam^c_i)+\sum_{j=2r+1}^{p}\alpha(\lam^c_j).$ 
\end{theorem}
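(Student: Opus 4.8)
The plan is to mirror the proof of Theorem~\ref{P1soln_real_L} verbatim in structure, replacing the Lie parameter $\ep_2=-1$ (and its pairing $\lam\mapsto-\overline{\lam}$) by the Jordan parameter $\ep_2=1$ (and its pairing $\lam\mapsto\overline{\lam}$). Since $A$ is real, its non-real eigenvalues automatically occur in conjugate pairs $\{\lam^c_i,\overline{\lam^c_i}\}$, which is precisely the pairing dictated by $\ep_2=1$, while the real eigenvalues $\lam^c_j$ are self-paired; thus the complexified matrix lies in the $*$-Jordan algebra with the same parameters $\ep_1,\ep_2=1$, so Propositions~\ref{PropA} and \ref{eigvec_ortho} apply with $\star=*$. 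First I would record $AX^c_i=X^c_i\Lam^c_i$ and $AX^c_j=X^c_j\Lam^c_j$, assemble $X_c,\Lam_c,\Lam_a$ as in the statement, and aim to instantiate Theorem~\ref{updateA} with $R=I$; the only thing requiring checking is the compatibility condition (\ref{eqn:cond22}), namely $X_c^*HX_c(\Lam_a-\Lam_c)=\ep_1\ep_2\big(X_c^*HX_c(\Lam_a-\Lam_c)\big)^*$ with $\ep_2=1$.

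The first substantive step is the block form of $X_c^*HX_c$. As $\lam^c_i,\overline{\lam^c_i},\lam^c_j$ are distinct, Proposition~\ref{PropA}(b) makes $X_c^*HX_c=\diag\big((X^c_1)^*HX^c_1,\dots,(X^c_r)^*HX^c_r,(X^c_{2r+1})^*HX^c_{2r+1},\dots,(X^c_p)^*HX^c_p\big)$. For a non-real index $i$ the chains $\hat X^c_i,\overline{\hat X^c_i}$ carry $\lam^c_i\neq\overline{\lam^c_i}$, so Proposition~\ref{PropA}(b) gives $(\hat X^c_i)^*H\hat X^c_i=0$ and $(X^c_i)^*HX^c_i=\bmatrix{0&\Gamma_i\\ \ep_1\Gamma_i^*&0}$ with $\Gamma_i=(\hat X^c_i)^*H\overline{\hat X^c_i}$, whereas for a real index $j$ the block $(X^c_j)^*HX^c_j$ is left free.

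The key algebraic point, and the step I expect to demand the most careful bookkeeping, is condition (\ref{eqn:cond22}). Since $M^*=\ep_1 M$ for $M=X_c^*HX_c$, it is equivalent to $M\Lam_a=\Lam_a^*M$, and because $M\Lam_c=\Lam_c^*M$ already holds by Proposition~\ref{PropA}(a) it suffices to verify $M\Lam_a=\Lam_a^*M$ blockwise. Writing each Jordan block as $\lam I+N$ with a common nilpotent $N$ for the aimed and original data (this is exactly where the equality of block sizes $\#J_l(\lam^a_\bullet)=\#J_l(\lam^c_\bullet)$, forced by reusing the chains $X^c_{\bullet,l}$, is used), the identity furnished by Proposition~\ref{PropA}(a) for the original data collapses to the purely nilpotent relation $\Gamma_iN=N^T\Gamma_i$ (respectively $(X^c_j)^*HX^c_j\,N_j=N_j^T(X^c_j)^*HX^c_j$), the scalar shift having cancelled. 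Replacing $\lam^c$ by $\lam^a$ changes only that scalar part, which cancels in the same way, so the same relation yields $M\Lam_a=\Lam_a^*M$. Hence (\ref{eqn:cond22}) holds with $R=I$, and Theorem~\ref{updateA} delivers the stated $\Delta A\in\J$ with $Z^*=\ep_1\ep_2Z=\ep_1 Z$.

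It remains to establish reality, handled as in Theorem~\ref{P1soln_real_L}. I would exhibit the real involution $R=\diag(R_1,\dots,R_r,R_{2r+1},\dots,R_p)$, with $R_i=\bmatrix{0&I\\ I&0}$ on non-real blocks and $R_j=I$ on real blocks, satisfying $\overline{X_c}=X_cR$, $\overline{\Lam_c}=R^{-1}\Lam_c R$ and $\overline{\Lam_a}=R^{-1}\Lam_a R$, whence $\overline{X_c(\Lam_a-\Lam_c)}=X_c(\Lam_a-\Lam_c)R$ and $\overline{X_c^\dagger}=R^{-1}X_c^\dagger$. Setting $T_1=X_c(\Lam_a-\Lam_c)X_c^\dagger$, the identities $\overline{T_1}=T_1$ and $\overline{X_cX_c^\dagger}=X_cX_c^\dagger$ follow as before; as $H$ is real and the two middle terms equal $\ep_2H^{-1}T_1^*H$ and $-H^{-1}(X_cX_c^\dagger)^*HT_1$, they are real, and the final term is real once $Z$ is, giving the real $\Delta A$ for every $Z=\ep_1Z^T\in\R^{n\times n}$. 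For the no-spillover claim, disjointness of $\{\lam^c_i,\overline{\lam^c_i},\lam^c_j\}$ from $\{\lam^f_k\}$ together with Proposition~\ref{PropA}(b) gives $X_c^*HX_f=0$ for a completing invariant pair $(X_f,\Lam_f)$, so $[X_c\ X_f]^*H[X_c\ X_f]$ is block diagonal and invertible, forcing $X_c^*HX_c$ nonsingular; Theorem~\ref{updateA_nospillover} then produces $\Delta A=X_c(\Lam_a-\Lam_c)(X_c^*HX_c)^{-1}X_c^*H\in\J$ of rank $q=\sum_i 2\alpha(\lam^c_i)+\sum_j\alpha(\lam^c_j)$, and the substitution $\overline{X_c}=X_cR$ with $R^2=I$ shows $\overline{\Delta A}=\Delta A$.
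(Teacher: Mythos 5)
Your proposal is correct and takes essentially the same route as the paper, which simply states that the proof is analogous to that of Theorem \ref{P1soln_real_L}; you carry out that adaptation faithfully (block structure of $X_c^*HX_c$ via Proposition \ref{PropA}(b), verification of condition (\ref{eqn:cond22}) with $R=I$, reality via the conjugation permutation, and Theorem \ref{updateA_nospillover} for the no-spillover part). Your explicit reduction of the compatibility condition to the nilpotent relation $\Gamma_iN=N^T\Gamma_i$ is in fact more detailed than the paper's "on computing we note that" in the Lie-algebra case, and is a sound justification.
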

\begin{proof}
The proof is similar to the proof of Theorem  \ref{P1soln_real_L}. $\hfill{\square}$

\end{proof}

The following theorem presents the structured solution to Problem \textbf{(P1)} when the eigenvalues to be changed are simple.

\begin{theorem} \label{P1_soln_distinctJ}
Let $A\in\J\subset\R^{n\times n}.$ Suppose $(\lam^c_i,x^c_i),\, ( \overline{\lam^c_i},\overline{x}^c_i), (\lam^c_j,x^c_j)$ are eigenpairs of $A$ where $i=1, \hdots, r,$ $j=2r+1, \hdots, p$, and $\lam^c_i\neq 0\in \C\smallsetminus \R,\,\lam^c_j\in \R.$ Let the eigenvalues $\lam_i^c,\, \overline{\lam^c_i},\,\lam_j^c$ be all simple and distinct. Let $\lam^a_i, \lam^a_j$ be a collection of scalars with $\lam^a_i\in \C\smallsetminus \R$ and $\lam^a_j\in \R$ where  $i=1, \hdots, r,$ $j=2r+1, \hdots, p.$ Then any real structured perturbation $\Delta A\in\J$ of $A$ for which  $(\lam^a_i,x^c_i),\, (\overline{\lam^a_i}, \overline{x}^c_i),\, (\lam^a_j, x^c_j)$ are eigenpairs of $A+\Delta A$ is given by 
 \beano \Delta A &=& X_c(\Lam_a-\Lam_c)X_c^{\dagger}+ H^{-1}(X_c^\dagger)^*(\Lam_a-\Lam_c)^*X_c^*H-H^{-1}(X_c^{\dagger})^*X_c^*HX_c(\Lam_a-\Lam_c)X_c^{\dagger} \\ && +H^{-1}(I-X_cX_c^{\dagger})Z(I-X_cX_c^{\dagger}),\eeano where $X_c=\bmatrix{X_1^c & \hdots & X_r^c & x_{2r+1}^c & \hdots & x_p^c},\,X_i^c=\bmatrix{x_i^c & \overline{x}_i^c},$ $\Lam_c=\diag(\Lam_1^c, \hdots, \Lam_r^c,\linebreak \lam^c_{2r+1}, \hdots, \lam_p^c),\,\Lam_i^c =\diag(\lam_i^c, \overline{\lam_i^c}),\,\Lam_a=\diag(\Lam_1^a, \hdots, \Lam_r^a, \lam^a_{2r+1}, \hdots, \lam_p^a),\,\Lam_i^a =\diag(\lam_i^a, \overline{\lam_i^a})$ and $Z^T=\ep_1 Z\in \R^{n\times n}$ is arbitrary.
 
Moreover, if $\left\{\lam^c_i,\,\overline{\lam^c_i},\,\lam^c_j\,:\,i=1,\dots,r,\,j=2r+1,\dots ,p\right\}\cap \{\lam^f_k\,:\,k=p+1,\dots,n\}=\emptyset$ 
then a real structured no spillover perturbation is given by $\Delta A=X_c(\Lam_a-\Lam_c)(X_c^*HX_c)^{-1}X_c^*H.$ 
\end{theorem}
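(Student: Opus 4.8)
The plan is to deduce this simple-eigenvalue statement directly from the general Theorem~\ref{P1soln_real_J} by re-establishing, in the cleaner simple-eigenvalue setting, its single structural requirement: condition~(\ref{eqn:cond22}) taken at $R=I$. Since $\ep_2=1$ for $\J$, that condition reads $X_c^*HX_c(\Lam_a-\Lam_c)=\ep_1\left(X_c^*HX_c(\Lam_a-\Lam_c)\right)^*$, so the entire argument reduces to verifying this Hermitian-type identity and then quoting the general theorem for the explicit perturbation formula. First I would record the eigen-equations $AX_i^c=X_i^c\Lam_i^c$ with $\Lam_i^c=\diag(\lam_i^c,\overline{\lam_i^c})$ and $Ax_j^c=\lam_j^c x_j^c$. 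Because $\lam_i^c,\overline{\lam_i^c},\lam_j^c$ are pairwise distinct, Proposition~\ref{PropA}(b) forces every $H$-cross product between eigenvector blocks attached to different eigenvalues to vanish, yielding the block-diagonal form $X_c^*HX_c=\diag\!\big(\Gamma_1,\dots,\Gamma_r,(x_{2r+1}^c)^*Hx_{2r+1}^c,\dots,(x_p^c)^*Hx_p^c\big)$.

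Next I would pin down each diagonal block. For a non-real $\lam_i^c$ one has $\lam_i^c\neq\ep_2\overline{\lam_i^c}=\overline{\lam_i^c}$, so Corollary~\ref{blockform}, applied to the conjugate pair $x_i^c,\overline{x_i^c}$, gives $\Gamma_i=\bmatrix{0 & h_i \\ \ep_1\overline{h_i} & 0}$ with $h_i=(x_i^c)^*H\overline{x_i^c}$; for a real $\lam_j^c$ the block is the scalar $g_j:=(x_j^c)^*Hx_j^c$ with $x_j^c$ real, and $H^*=\ep_1 H$ yields the reality relation $\overline{g_j}=\ep_1 g_j$. The key step is then the block-by-block check. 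Writing $\delta_i=\lam_i^a-\lam_i^c$ and $\mu_j=\lam_j^a-\lam_j^c\in\R$, the complex block gives $\Gamma_i\,\diag(\delta_i,\overline{\delta_i})=\bmatrix{0 & h_i\overline{\delta_i} \\ \ep_1\overline{h_i}\delta_i & 0}$, which equals $\ep_1$ times its conjugate transpose since $\ep_1^2=1$; the real block gives $g_j\mu_j$, and $\ep_1\overline{g_j\mu_j}=\ep_1\overline{g_j}\,\mu_j=\ep_1^2 g_j\mu_j=g_j\mu_j$. Assembling the blocks establishes $X_c^*HX_c(\Lam_a-\Lam_c)=\ep_1\left(X_c^*HX_c(\Lam_a-\Lam_c)\right)^*$, i.e. condition~(\ref{eqn:cond22}) at $R=I$.

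With this verified, Theorem~\ref{P1soln_real_J} applies verbatim: it produces the displayed parametric family of real structured perturbations $\Delta A\in\J$ for which $(\lam_i^a,x_i^c),(\overline{\lam_i^a},\overline{x}_i^c),(\lam_j^a,x_j^c)$ are eigenpairs of $A+\Delta A$, and, under the disjointness hypothesis, the rank-$p$ no spillover perturbation $\Delta A=X_c(\Lam_a-\Lam_c)(X_c^*HX_c)^{-1}X_c^*H$; its reality and the nonsingularity of $X_c^*HX_c$ are already secured inside Theorems~\ref{P1soln_real_J} and~\ref{updateA_nospillover}. I expect the only genuine friction to be the sign-bookkeeping in the block verification, where one must carry the $\ep_1$ prefactor in the off-diagonal coupling of each conjugate eigenvector pair and invoke the relation $\overline{g_j}=\ep_1 g_j$ so that the scalar real-eigenvalue blocks obey the identity for both signs of $\ep_1$; everything past that point is an immediate specialization of the general theorem.
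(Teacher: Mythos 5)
Your proposal is correct and follows essentially the same route as the paper: verify the condition $X_c^*HX_c(\Lam_a-\Lam_c)=\ep_1\left(X_c^*HX_c(\Lam_a-\Lam_c)\right)^*$ via the block-diagonal structure of $X_c^*HX_c$ coming from Proposition~\ref{PropA}(b) and Corollary~\ref{blockform}, then invoke Theorem~\ref{P1soln_real_J}. Your block-by-block sign check (including the relation $\overline{g_j}=\ep_1 g_j$ for the real-eigenvalue scalar blocks) is more explicit than the paper's one-line assertion, but it is the same argument.
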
 
\begin{proof}
It is given that $AX^c_i=X^c_i\Lam^c_i$ and $Ax^c_j=\lam^c_j x^c_j$ holds. As the eigenvalues $\lam_i^c,\,\overline{\lam_i^c},\, \lam_j^c$ are all distinct so from Proposition \ref{PropA} $(b)$ and using Corollary \ref{blockform} we obtain $X_c^*HX_c=\diag(\Gamma_1,\hdots, \Gamma_r,\,(x^c_{2r+1})^*Hx^c_{2r+1},\hdots,(x^c_{p})^*Hx^c_{p})$ with $\Gamma_i=\bmatrix{0& (x^c_i)^*H\overline{x}^c_i\\ \overline{(x^c_i)^*H\overline{x}^c_i}& 0}$. Hence it is follows that $X_c^*HX_c(\Lam_a-\Lam_c)=\ep_1\left(X_c^*HX_c(\Lam_a-\Lam_c)\right)^*$ holds.  Hence the desired result follows from Theorem \ref{P1soln_real_J}. $\hfill{\square}$
\end{proof}
\section{Numerical Examples}
In this section we consider two numerical examples to validate the obtained results on finding structured perturbations for a structured matrices such that a perturbed matrix reproduce a desired set of eigenvalues and preserve the eigenvectors of the unperturbed matrix. 

\begin{example}

Let $A\in \L\subset \C^{4\times 4}$ corresponding to the scalar product defined by 
\beano H &=& \bmatrix{0& 0& 0& 1\\0& 0& i& 0\\0& -i& 0& 0\\1& 0& 0& 0} \, \mbox{and} \\
 \scriptsize A &=& \bmatrix{1.38328 + 2.23663i&  -1.87526 + 1.09675i&   0.28969 - 1.61767i&   0.00000 - 0.38630i \\
  -0.30572 - 0.81666i&   1.95327 - 0.56098i&   0.70575 + 0.00000i&   1.61767 - 0.28969i\\
   1.70871 + 0.60225i&  -3.36711 + 0.00000i&  -1.95327 - 0.56098i&   1.09675 - 1.87526i \\
   0.00000 - 0.05281i&   0.60225 + 1.70871i&   0.81666 + 0.30572i&  -1.38328 + 2.23663i}.\eeano 
   
Suppose it is required to replace the eigenvalues $2.72646 + 1.45462i,\,-2.72646 + 1.45462i,\,1.39475i$ of $A$ by the desired numbers $3.17634+1.32477i,\,-3.17634+1.32477i,\,-1.30322i$ on perturbing $A$ by $A+\Delta A.$ Then, \beano \Lambda_c &=&\mbox{diag}\left(2.72646 + 1.45462i,\,-2.72646 + 1.45462i,\,1.39475i \right), \\
X_c &=& \bmatrix{
0.73457 + 0.00000i&  -0.02152 + 0.24956i&   0.71237 + 0.00000i\\
-0.27981 - 0.31416i&   0.02238 - 0.04538i&   0.06287 + 0.47116i \\
0.48270 + 0.00893i&   0.81361 + 0.00000i&  -0.05350 - 0.17376i \\
0.20304 - 0.09549i&  -0.51181 + 0.10382i&  -0.46244 - 0.14026i} \, \mbox{and} \\
\Lambda_a &=& \mbox{diag}\left(3.17634+1.32477i,\,-3.17634+1.32477i,\,-1.30322i \right).\eeano

Choose a skew-Hermitian matrix 
\small $$Z=\bmatrix{0.00000 - 1.67851i&   0.13730 + 1.92129i&   1.06091 + 0.54389i&  -1.18529 - 1.28875i \\
  -0.13730 + 1.92129i&   0.00000 + 1.00253i&  -1.43471 + 0.93643i&   0.21830 - 1.51800i\\
  -1.06091 + 0.54389i&   1.43471 + 0.93643i&   0.00000 + 1.05381i&  -0.34779 - 1.04181i \\
   1.18529 - 1.28875i&  -0.21830 - 1.51800i&   0.34779 - 1.04181i&   0.00000 - 0.48090i}.$$

Consequently, by Theorem \ref{P1_comp_distinct}, the corresponding structured perturbation is given by
 $$\Delta A=\bmatrix{-0.13762 - 1.22005i&  -0.65838 + 0.51555i&  -0.12923 + 0.84764i&   0.00000 + 1.62647i\\
   0.72270 - 0.48518i&   0.10142 - 0.64947i&  -0.63261 - 0.00000i&  -0.84764 + 0.12923i\\
   0.02135 + 0.28537i&  -0.72900 + 0.00000i&  -0.10142 - 0.64947i&   0.51555 - 0.65838i\\
   0.00000 + 0.85994i&   0.28537 + 0.02135i&   0.48518 - 0.72270i&   0.13762 - 1.22005i} \in\L.$$
 
Then note that $\|(A+\Delta A)X_c-X_c\Lam_a\|_F=5.4124\times 10^{-15}.$ This ensures that eigenvalues has been replaced successfully without changing its corresponding eigenvectors. 
\end{example}

\begin{example}
Let $A\in \J\subset \R^{5\times 5}$ corresponding to the scalar product defined by 
\beano H &=& \bmatrix{0.90770&   0&   0&   0&  -0.41963\\
   0&   0.99700&   0&   0.07742&   0\\
   0&  0&  -1.00000&   0&   0\\
   0&   0.07742&   0&  -0.99700&   0\\
  -0.41963&   0&  0&  0&  -0.90770}, \, \mbox{and} \\
  A &=& \bmatrix{0.865624&  -1.723920&  -0.349127&   1.693308& 0.330444\\
  -1.766399&   2.575284&   0.927433&   0.347141&  -0.037427\\
  -0.779092&  -0.886132&  -4.893758&  -0.567820&  -2.517258\\
  -1.118777&  -0.275415&  -0.497511&   1.651619&   1.921189\\
  -1.458421&   0.674209&  -2.611835&   1.330580&  -1.574315}.\eeano

Let $\lam^c_1=2.87055 + 0.71763i,\,\lam^c_2=-0.65938$ and $\lam^a_1=3.17331-1.23542i ,\,\lam^a_2=1.33797$ where $\lam^c_1,$ $\overline{\lam^c_1}$ and $\lam^c_2$ are simple eigenvalues of $A.$ Let $(\lam^c_1,x^c_{1}),\,(\overline{\lam^c_1},\overline{x^c_{1}}),\,(\lam^c_2,x^c_{2})$ be eigenpairs of $A$. Suppose it is required to replace the known eigenvalues $\lam^c_1,\,\overline{\lam^c_1},\,\lam^c_2$ of $A$ by the desired numbers $\lam^a_1,\,\overline{\lam^a_1},\,\lam^a_2$ such that $(\lam^a_1,x^c_{1}),\,(\overline{\lam^a_1},\overline{x^c_{1}}),\,(\lam^a_2,x^c_{2})$ are eigenpairs of $A+\Delta A$ when rest of the eigenpairs of $A$ are preserved by $A+\Delta A.$ 

Let \beano \Lambda_c &=& \mbox{diag}\left(2.87055 + 0.71763i,\,2.87055 - 0.71763i,\,-0.65938 \right), \\
 X_c &=& \bmatrix{-0.12653 - 0.25223i&  -0.12653 + 0.25223i&   0.52841 \\
   0.62285 + 0.00000i&   0.62285 - 0.00000i&   0.45037 \\
  -0.20533 + 0.10979i&  -0.20533 - 0.10979i&  -0.46451 \\
   0.47522 - 0.30357i&   0.47522 + 0.30357i&  -0.21212 \\
   0.37734 - 0.13355i&   0.37734 + 0.13355i&   0.50714}, \, \mbox{and} \\
\Lambda_a &=& \mbox{diag}\left(3.17331-1.23542i,\,3.17331+1.23542i,\,1.33797 \right).\eeano

Note that $X_c^*HX_c$ is nonsingular and the assumption of Theorem \ref{P1_soln_distinctJ} holds. Thus a no spillover structured perturbation is given by 
$$\Delta A=\bmatrix{-0.647698&  -1.627577&  -1.550473&  -1.527484& 2.142323\\
  -0.147615&  -4.049645&  -2.545965&   1.524353&   3.829506\\
   0.432213&   2.545140&   1.893505&   0.109324&  -2.759969\\
   1.566449&  -1.994170&  -0.088048&   2.000580&   0.059502\\
  -0.268251&  -3.458913&  -2.323848&   0.444879&   3.406128} \in \J.$$

Further, computing the rest of the `fixed' eigenpairs of $A$ we obtain \beano \Lam_f &=& \mbox{diag}\left(-6.28040,\,-0.17686\right), \, \mbox{and} \\ 
X_f &=& \bmatrix{0.01522&  -0.64955\\
  -0.08140&  -0.55276\\
   0.85330&   0.39494\\
  -0.07067&  -0.01486\\
   0.50993&  -0.34108}.\eeano Then $\|(A+\Delta A)X_f-X_f\Lam_f\|_F=9.59611 \times 10^{-15}$ which ensures that unknown eigenpairs of $A$ remains to be eigenpairs of $A+\Delta A,$ whereas $\|(A+\Delta A)X_c-X_c\Lam_a\|_F=7.31737 \times 10^{-15}$ guarantees that eigenvalues has been replaced successfully without changing its corresponding eigenvectors.  
\end{example}

\noindent{\bf Conclusion.} 
Explicit parametric expressions of structured perturbations of a structured matrix are obtained such that several given eigenvalues of the matrix can be modified by a desired set of eigenvalues while preserving the Jordan chains of the unperturbed matrix. Structured preserving perturbations  are determined which in addition preserve the rest of the Jordan pairs of the matrix that need not be known. In this case the perturbations are called no spillover perturbations. These results are obtained by first determining structure preserving perturbations which preserve complementary invariant subspaces of a given structured matrix.

\end{document}